\newtheorem{thm}{Theorem}[section]
\newtheorem*{thm*}{Theorem}
\newtheorem{lem}[thm]{Lemma}
\newtheorem*{lem*}{Lemma}
\newtheorem{cor}[thm]{Corollary}
\newtheorem{conj}{Conjecture}
\theoremstyle{definition} 
\theoremstyle{definition} \newtheorem{prop}[thm]{Proposition}
\theoremstyle{definition} \newtheorem{defn}[thm]{Definition}
\theoremstyle{definition} \newtheorem{rem}[thm]{Remark}
\theoremstyle{definition} 
\theoremstyle{definition} \newtheorem{ex}[thm]{Example}
\DeclareMathOperator{\sgn}{sgn}
\DeclareMathOperator{\GL}{GL}
\DeclareMathOperator{\Ind}{Ind}
\DeclareMathOperator{\shape}{shape}
\DeclareMathOperator{\Gr}{Gr}
\DeclareMathOperator{\Red}{Red}
\DeclareMathOperator{\red}{red}
\DeclareMathOperator{\fl}{fl}
\DeclareMathOperator{\hstrips}{hstrips}
\newcommand{\rowgrp}{\mathbf{R}}
\newcommand{\colgrp}{\mathbf{C}}
\newcommand{\Z}{\mathbb{Z}}
\newcommand{\N}{\mathbb{N}}
\newcommand{\C}{\mathbb{C}}
\renewcommand{\P}{\mathbb{P}}
\newcommand{\shapes}{M}  
\definecolor{light-gray}{gray}{.7}
\definecolor{dark-gray}{gray}{.3}
\begin{document}

\title[Patterns, Stanley symmetric functions and Specht modules]{Permutation Patterns, Stanley symmetric functions and generalized Specht modules}

\date{\today}
\author{Sara Billey and Brendan Pawlowski}
\thanks{Both authors were partially supported by grant DMS-1101017 from
the NSF} 

\begin{abstract}

Generalizing the notion of a vexillary permutation, we introduce a
filtration of $S_{\infty}$ by the number of terms in the Stanley
symmetric function, with the $k$th filtration level called the
$k$-vexillary permutations.  We show that for each $k$, the
$k$-vexillary permutations are characterized by avoiding a finite set
of patterns.  A key step is the construction of a Specht series, in
the sense of James and Peel, for the Specht module associated to the
diagram of a permutation. As a corollary, we prove a conjecture of Liu
on diagram varieties for certain classes of permutation diagrams.  We apply similar techniques to
characterize multiplicity-free Stanley symmetric functions, as well as
permutations whose diagram is equivalent to a forest in the sense of
Liu.
\end{abstract}

\maketitle

\section{Introduction}
\label{sec:intro}

In \cite{stanleysymm}, Stanley defined a symmetric function $F_w$
depending on a permutation $w$, with the property that the coefficient
of $x_1 \cdots x_{\ell}$ in $F_w$ is the number of reduced words of
$w$. Therefore, if $F_w = \sum_{\lambda} a_{w\lambda} s_{\lambda}$ is
written in terms of Schur functions, then
\begin{equation} \label{eq:reducedwordcount}
|\Red(w)| = \sum_{\lambda} a_{w\lambda} f^{\lambda},
\end{equation}
where $f^{\lambda}$ is the number of standard Young tableaux of shape
$\lambda$ and $\Red(w)$ the set of reduced words of $w$.

Edelman and Greene \cite{edelmangreene} gave an algorithm which
realizes (\ref{eq:reducedwordcount}) bijectively and shows that the
$a_{w\lambda}$ are nonnegative.  An alternative approach can be given
in terms of the nil-plactic monoid \cite{L-S-flag-variety-cohomology-hopf-structure}.

\begin{thm*} Given a permutation $w$, there is a set $\mathcal{EG}(w)$ of semistandard Young tableaux and a bijection
\begin{equation*}
\Red(w) \leftrightarrow \{(P,Q) : P \in \mathcal{EG}(w), \text{$Q$ a standard tableau of shape $\shape(P)$}\}.
\end{equation*}
\end{thm*}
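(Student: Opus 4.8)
The plan is to construct the bijection explicitly through an insertion algorithm generalizing the Robinson--Schensted correspondence, namely \emph{Edelman--Greene insertion}. Given a reduced word $a = a_1 \cdots a_\ell$ for $w$, I would insert the letters $a_1, \dots, a_\ell$ one at a time, maintaining at each stage an \emph{increasing tableau} $P$ (strictly increasing along rows and down columns) together with a standard recording tableau $Q$ recording the order in which boxes are created. The row-insertion rule mimics Schensted's with one twist: to insert $x$ into a row $R$, append $x$ if $x$ exceeds every entry of $R$; otherwise let $y$ be the smallest entry of $R$ with $y > x$, and if $y = x+1$ with $x$ already in $R$, leave $R$ unchanged and insert $x+1$ into the next row, while if not, replace $y$ by $x$ in $R$ and insert $y$ into the next row. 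Define $\mathcal{EG}(w)$ to be the set of tableaux $P$ arising as insertion tableaux of reduced words of $w$.

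First I would verify that the algorithm is well-defined: an induction over the rows shows that each intermediate $P$ remains an increasing tableau, that the newly added box is always an outer corner (so $Q$ is genuinely standard), and that $P$ and $Q$ retain a common shape. A short computation using the twist shows that each elementary step changes the column reading word of $P$ either trivially or by a sequence of commutations together with one braid move, so the reading word of $P$ is again reduced and still represents $w$. Invertibility then follows as in the classical case: given $(P,Q)$ with $P$ increasing and $Q$ standard of shape $\shape(P)$, delete the box of $Q$ holding the largest entry and run the insertion steps in reverse, each forward step having a unique inverse; this recovers a reduced word, and one checks it is the unique word with insertion data $(P,Q)$. Hence $a \mapsto (P(a), Q(a))$ maps $\Red(w)$ bijectively onto $\{(P,Q) : P \in \mathcal{EG}(w),\ Q \text{ standard of shape } \shape(P)\}$ --- which is the assertion, once we know $\mathcal{EG}(w)$ depends only on $w$.

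That last point is where the real content lies, and it is the analogue of Knuth's theorem: two words have the same insertion tableau if and only if they are linked by \emph{Coxeter--Knuth moves} $a,a+1,a \leftrightarrow a+1,a,a+1$ and $a,c,b \leftrightarrow c,a,b$, $b,a,c \leftrightarrow b,c,a$ for $a < b < c$. I expect the ``only if'' direction to be the main obstacle: it needs a case analysis verifying that two words differing by a single Coxeter--Knuth move, inserted into a common tableau, produce identical results, and the twist in the insertion rule must be tracked carefully through every case. The ``if'' direction is easier, following once one shows every word is Coxeter--Knuth equivalent to the reading word of its insertion tableau and that distinct increasing tableaux have inequivalent reading words.

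Finally, since the inequalities $a < b < c$ force $c \ge a + 2$, each Coxeter--Knuth move is either a braid relation or a swap of two commuting simple transpositions; thus every such move preserves both the underlying permutation and reducedness, so the Coxeter--Knuth class of any reduced word of $w$ consists entirely of reduced words of $w$. Consequently $\mathcal{EG}(w)$ --- equivalently, the set of reading-word representatives of these classes --- is a function of $w$ alone, completing the proof; summing $f^{\shape(P)}$ over $P \in \mathcal{EG}(w)$ then recovers~\eqref{eq:reducedwordcount} bijectively. An alternative to the insertion argument is to identify $\mathcal{EG}(w)$ with the set of nilplactic classes meeting $\Red(w)$ via the nilplactic monoid of Lascoux and Schützenberger~\cite{L-S-flag-variety-cohomology-hopf-structure}.
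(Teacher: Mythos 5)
Your proposal is correct and is essentially the paper's own approach: the paper establishes this theorem by citing the Edelman--Greene insertion algorithm (with the nil-plactic monoid as an alternative), and your sketch accurately reconstructs that argument --- the modified bumping rule, invertibility of insertion, and the Coxeter--Knuth equivalence theorem that guarantees reverse insertion from any pair $(P,Q)$ with $P\in\mathcal{EG}(w)$ yields a reduced word of $w$. Your identification of the Coxeter--Knuth ``only if'' case analysis as the main technical burden matches where the work actually lies in \cite{edelmangreene}.
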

The tableaux $\mathcal{EG}(w)$ are those semistandard tableaux whose column word---obtained by reading up columns starting with the leftmost---is a reduced word for $w$. The (transposed) shapes of these tableaux precisely give the Schur function expansion of $F_w$:
\begin{equation*}
F_w = \sum_{P \in \mathcal{EG}(w)} s_{\shape(P)^t},
\end{equation*}
where $\lambda^t$ is the conjugate of $\lambda$. Define the
permutation statistic $EG(w) = \sum a_{w\lambda}=|\mathcal{EG}(w)|$, which we call the \textit{Edelman-Greene number}.   

Stanley also characterized those $w$ for which $F_w$ is a single Schur function, or equivalently for which $EG(w)=1$: these are the \emph{vexillary} permutations, those avoiding the pattern $2143$. Our main results can be viewed as generalizations of this characterization. The first main theorem shows that $\mathcal{EG}(w)$ is well-behaved with respect to pattern containment.

\begin{thm} \label{thm:patternthm}
Let $v, w$ be permutations with $w$ containing $v$ as a pattern. There is an injection $\iota : \mathcal{EG}(v) \hookrightarrow \mathcal{EG}(w)$ such that if $P \in \mathcal{EG}(v)$, then $\shape(P) \subseteq \shape(\iota(P))$. Moreover, if $P, P'$ have the same shape, so do $\iota(P), \iota(P')$.
\end{thm}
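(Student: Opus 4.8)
The plan is to reduce to a single-entry insertion and then follow the effect on Edelman--Greene (EG) insertion. First, if $w$ contains $v$ as a pattern, then there is a chain $v = u_0, u_1, \ldots, u_m = w$ in which each $u_{i+1}$ is obtained from $u_i$ by inserting a single new entry (pick a position and a value, and shift the larger positions and values up by one): just repeatedly delete a position of $w$ not used by the chosen occurrence of $v$. The three properties asked for---an injection $\iota$, with $\shape(P)\subseteq\shape(\iota(P))$, and with $\iota(P),\iota(P')$ equal-shaped whenever $P,P'$ are---are all stable under composition, so it suffices to construct such an $\iota$ when $w$ arises from $v$ by one insertion, say of value $q$ at position $p$.

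For the single-insertion step I would argue with reduced words, viewed as reduced wiring diagrams (pseudoline arrangements). A reduced word $a\in\Red(v)$ is a reduced wiring diagram on $n$ strands; the permutation $w$ is obtained by adding one strand, running from bottom position $p$ to top position $q$, which must cross each old strand exactly once if the two are in opposite relative order at the bottom and at the top, and not at all otherwise. Route this new strand in a fixed canonical way---pushing it as far as possible to one side, so that all of its crossings occur as early as possible---and note that the outcome is still reduced (a strand can always be slid into a reduced arrangement this way). Reading off the word gives a map $\sigma\colon\Red(v)\hookrightarrow\Red(w)$, injective because $a$ is recovered by deleting the added strand; crucially, the new crossings are interleaved with the old ones at places determined only by the relative-order data of $a$, not by where the crossings of $a$ themselves occur.

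The heart of the proof is to show that $\sigma$ is compatible with the EG correspondence in the following sense: if $a\in\Red(v)$ has EG insertion tableau $P$ and recording tableau $Q$, then $\sigma(a)$ has insertion tableau $\iota(P)$ depending only on $P$, with $\shape(P)\subseteq\shape(\iota(P))$, and $\sigma(a)$ has recording tableau obtained from $Q$ by filling the cells of $\shape(\iota(P))/\shape(P)$ with $\ell(v)+1,\ldots,\ell(w)$ in an order depending only on those two shapes. I expect to prove this by induction on $\ell(v)$, removing the last letter of $a$ and tracking how the canonically placed extra crossings are processed during insertion: because those crossings sit at positions dictated only by the relative-order data, the bumping paths they trigger are controlled by $\shape(P)$ and the positions of the entries of $P$, hence by $P$ alone. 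Granting this, $\iota$ is well defined on $\mathcal{EG}(v)$ (independent of the choice of $a$ with insertion tableau $P$), it enlarges the shape, the ``same shape'' clause is immediate since the adjoined skew shape depends only on $\shape(P)$, and $\iota$ is injective: distinct $P,P'$ cannot have $\iota(P)=\iota(P')$, since then $\sigma$ would send the two disjoint EG blocks indexed by $P$ and $P'$ to overlapping sets of words, contradicting injectivity of $\sigma$ together with the explicit description of the recording tableaux (whose ``$\le\ell(v)$'' part has shape $\shape(P)$ resp.\ $\shape(P')$).

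The main obstacle is precisely this compatibility statement, and within it the assertion that the adjoined cells---and indeed all of $\iota(P)$---depend only on $P$ and not on $Q$. The subtle point is the choice of routing for the new strand: a careless routing would make the new bumping paths, hence the new cells, depend on where the old crossings of $a$ lie rather than just on $P$, so part of the work is verifying that the ``extreme'' routing is immune to this. A secondary nuisance is the bookkeeping with recording tableaux needed to upgrade ``$\iota$ well defined'' to ``$\iota$ injective.''
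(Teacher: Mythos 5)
Your strategy is entirely different from the paper's, and it breaks down at exactly the step you identify as ``the heart of the proof.'' The compatibility claim --- that the Edelman--Greene insertion tableau of $\sigma(a)$ depends only on the insertion tableau $P$ of $a$, contains $\shape(P)$, and has a recording tableau determined by $Q$ and the two shapes --- is not a verification to be deferred; as far as is known it is open. Indeed the paper explicitly remarks that ``an explicit map on tableaux is lacking,'' and that an injection $\Red(v)\hookrightarrow\Red(w)$ respecting Edelman--Greene insertion shapes is known only when $v$ is vexillary (Tenner). The heuristic you offer is also internally inconsistent: you assert that the new crossings are interleaved ``at places determined only by the relative-order data of $a$, not by where the crossings of $a$ themselves occur,'' but the relative order of the wires at a given height is determined precisely by which crossings of $a$ have already happened, so the columns in which the extremally routed strand crosses \emph{do} depend on the positions of the old crossings, and these vary as $a$ ranges over a Coxeter--Knuth class (i.e., over words with the same $P$). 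A further unaddressed difficulty: adding a strand shifts each old letter $a_j$ to $a_j$ or $a_j+1$ according to which side of the new wire that crossing lies on, and interleaves new letters in the middle of the word, so the bumping process for $\sigma(a)$ is not a continuation of that for $a$; even for a single fixed $a$, the containment $\shape(P)\subseteq\shape(P_{EG}(\sigma(a)))$ is not automatic. (The well-definedness of the extremal routing as a \emph{reduced} word of $w$ also needs an argument, but that is minor by comparison.)

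The paper circumvents all of this by never constructing a map on reduced words or tableaux. It works with the generalized Specht module $S^{D(w)}$, whose Frobenius characteristic is $F_w=\sum_{P\in\mathcal{EG}(w)}s_{\shape(P)^t}$: pattern containment makes $D(v)$ a subdiagram of $D(w)$; the Lascoux--Sch\"utzenberger transitions are realized as James--Peel moves (Lemma~\ref{lem:JPtransitions}), producing a complete James--Peel tree for $D(w)$ (Theorem~\ref{thm:permutationJPtree}); and a complete James--Peel tree for a subdiagram induces one for the ambient diagram, yielding a shape-increasing, multiplicity-respecting injection on the multisets of irreducible constituents (Lemma~\ref{lem:JPtreetwosubdiagrams}, Corollary~\ref{cor:JPtreesubdiagram}). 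If you wish to pursue your route, you must actually prove the compatibility of strand insertion with Edelman--Greene insertion; that would be a new result, not a routine induction.
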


Let $S_{\infty}=\bigcup_{n\geq 0} S_n$.  An immediate corollary is
that the sets $\{w \in S_{\infty } : EG(w) \leq k\}$ respect pattern
containment, in the sense that if $EG(w) \leq k$ and $w$ contains
$v$, then $EG(v) \leq k$. Our second main result is a sort of
converse.

\begin{defn} Given a positive integer $k$, a permutation $w \in S_n$ is \emph{$k$-vexillary} if $EG(w) \leq k$. \end{defn}

For example, the 1-vexillary permutations are the vexillary
permutations.  More information about these permutations and their
enumeration can be found in the Online Encyclopedia of Integer
Sequences (OEIS) entry A005802.  The number of $k$-vexillary
permutations in $S_{n}$ for $k=2,3,4$ appear in the OEIS as A224318,
A223034, A223905.  

Like the vexillary permutations, the $k$-vexillary permutations can be
characterized by permuation patterns.  This is our first main theorem. 

\begin{thm} \label{thm:kvexthm}
For each integer $k \geq 1$, there is a finite set $V_k$ of permutations such that $w$ is $k$-vexillary if and only if $w$ avoids all patterns in $V_k$.
\end{thm}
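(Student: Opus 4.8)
By Theorem~\ref{thm:patternthm}, if $w$ contains $v$ as a pattern then the injection $\mathcal{EG}(v) \hookrightarrow \mathcal{EG}(w)$ forces $EG(v) \le EG(w)$, so the set $\mathcal{V}_k = \{w \in S_\infty : EG(w) \le k\}$ is closed under taking patterns. Consequently $\mathcal{V}_k$ is determined by its basis of minimal excluded permutations
\[
V_k = \{w : EG(w) > k \text{, but } EG(v) \le k \text{ for every pattern } v \subsetneq w\},
\]
and a permutation is $k$-vexillary exactly when it avoids every element of $V_k$. The substance of the theorem is thus that $V_k$ is \emph{finite}; as $V_k$ is an antichain under pattern containment, it suffices to establish a uniform bound $|w| \le N(k)$ for all $w \in V_k$.

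The plan is to prove a Reduction Lemma: there is a function $N(k)$ so that whenever $w \in S_n$ with $n > N(k)$ and $EG(w) > k$, the permutation $w$ has a one-letter deletion $v$ — a pattern of $w$ on $n-1$ letters — with $EG(v) > k$. Applied to a hypothetical $w \in V_k$ with $|w| > N(k)$, this contradicts minimality, giving the desired bound. To analyze one-letter deletions I would pass to the Rothe diagram $D(w)$ and the associated Specht module $S^{D(w)}$ for $S_{\ell(w)}$: deleting the letter in position $i$ (of value $w(i)$) replaces $D(w)$ by the diagram obtained by deleting row $i$ and column $w(i)$, and the James--Peel Specht series constructed for $S^{D(w)}$ — whose factors are indexed by the tableaux in $\mathcal{EG}(w)$, so that the number of factors is exactly $EG(w)$ — lets one track composition factors under this operation one cell at a time. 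Two cases organize the argument. When $D(w)$ has a superfluous row or column — which always happens when $w$ is "padded", e.g.\ has a fixed point splitting $w$ as a direct sum, but also in many further situations — deleting the corresponding letter changes $D(w)$ only by removing empty lines, so $F_w$ and hence $EG(w)$ are unchanged; this disposes of every $w$ whose diagram is not compact. When $D(w)$ is compact but $n$ is large, its cells are spread across many rows and columns, and I would use the Specht series to show that $k+1$ of its composition factors are already supported on a bounded sub-diagram, so that restricting to a suitable window of rows and columns, realized by a pattern of $w$, retains more than $k$ factors.

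The main obstacle is precisely this compact case: one needs genuinely quantitative control of how the Specht series behaves under restriction, showing that a sufficiently large diagram of a permutation cannot have all $k+1$ of its required composition factors "occupy" the whole diagram at once. The route I would try is to iterate one-letter deletions, $w = w^{(n)} \supsetneq w^{(n-1)} \supsetneq \cdots$, following the resulting diagrams: once the sequence of diagrams stabilizes up to empty lines, the value of $EG$ is eventually constant, and the length of the transient before stabilization should be bounded in terms of that eventual value. A permutation with $EG(w) > k$ then reaches a proper pattern with $EG$ still $> k$ within $N(k)$ steps unless $n$ was already small. Making "the diagram stabilizes" into an effective bound — in effect controlling how a vexillary-type stable core interacts, through Littlewood--Richardson coefficients, with the surrounding structure — is exactly the place the Specht series earns its keep, converting the analytic question "how does $EG$ move under a deletion" into a computable statement about pushing a James--Peel filtration through a single box.
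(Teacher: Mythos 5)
Your reduction is the right one and matches the paper's framing: since Theorem~\ref{thm:patternthm} makes $\{w : EG(w)\le k\}$ closed under patterns, the whole content of the theorem is a uniform bound on the size of a minimal non-$k$-vexillary permutation. But the proposal never establishes that bound. Everything after ``The plan is to prove a Reduction Lemma'' is a description of difficulties rather than an argument: you say yourself that the compact case is ``the main obstacle'' and that you ``would try'' an iteration-and-stabilization scheme. That scheme does not survive scrutiny. A one-letter deletion at position $i$ removes row $i$ and column $w(i)$ from $D(w)$, so it preserves the diagram up to empty lines only when \emph{both} that row and that column are empty, which forces $w(i)=i$ with $\{w(1),\dots,w(i-1)\}=\{1,\dots,i-1\}$, i.e.\ a direct-sum split point. (Your ``superfluous row \emph{or} column'' case is therefore already wrong as stated: for $w=231$, row $3$ of $D(w)$ is empty, yet deleting position $3$ destroys the diagram.) For an indecomposable $w$ every deletion strictly shrinks $D(w)$, so ``the sequence of diagrams stabilizes up to empty lines'' essentially never occurs, and the assertion that ``the length of the transient before stabilization should be bounded in terms of the eventual value'' is exactly the quantitative claim that needs proof; no mechanism is offered for it. As written, the proof of finiteness is missing.

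For contrast, the paper gets the bound not by deleting letters but by reading the bound off the James--Peel tree itself. One first prunes $JP(w)$ to a \emph{reduced} tree $RJP(w)$ in which every internal vertex has at least two children (Lemma~\ref{lem:equivalent} shows the extremal transition edges only permute rows/columns and can be contracted). A ``colorful'' subtree with $k+1$ leaves then involves James--Peel moves touching at most $2|E(\mathcal{T})|\le 4k$ rows and columns of $D(w)$ (Lemmas~\ref{lem:maxtreesize} and~\ref{lem:subtreepattern}); the positions of $w$ indexed by those rows and columns form a pattern $v\in S_m$, $m\le 4k$, and the same subtree is a (not necessarily complete) James--Peel tree for $D(v)$, whence $EG(v)\ge k+1$ (Theorem~\ref{thm:kvex}). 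That counting of touched rows and columns is precisely the ``genuinely quantitative control'' your sketch identifies as the obstacle but does not supply. If you want to pursue your deletion route instead, you would need an honest lemma of the form: if $n$ exceeds an explicit $N(k)$, some window of at most $N(k)$ positions already supports $k+1$ of the Specht-series factors --- and proving that is tantamount to the tree-counting argument above.
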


For the 2-vexillary and 3-vexillary permutations, we have explicitly
identified the list of patterns characterizing these sets.  We use
these properties to prove a conjecture of Ricky Liu on diagram
varieties related to 3-vexillary permutation diagrams.  We note that
permutation diagrams correspond with forests in the sense of Liu if
and only if the permutation avoids 4 patterns.  Furthermore, we can
give a nice description of Fulton's essential set for 3-vexillary
permutations.

Schur positive expansions of symmetric functions which are
multiplicity free have been important in many cases related to
representation theory and algebraic geometry.  For example, the Pieri
rule for multiplying a Schur function times a Schur function with just
one row or column is multiplicity free.  More generally, Stembridge
addressed the question of when the product of two Schur functions have
a multiplicity free expansion \cite{Stembridge.2001}.  Thomas and Yong
refined this work further in \cite{Thomas.Yong.2010}.  

As a corollary of Theorem~\ref{thm:patternthm}, we show that the
multiplicity free Stanley symmetric functions are indexed by a set of
permutations closed under taking patterns.  We conjecture that these
multiplicity free permutations can be characterized by avoiding a
finite set of permutations in $S_{6} \cup \cdots \cup S_{11}$.  As
with $k$-vexillary permutations, one can define a filtration on
permutations by bounding the multiplicities in the Stanley symmetric
functions.  It is shown that each filtration level again respects
pattern containment.  These permutations are also related to a new
type of pattern on the code of a permutation.  We also note that
3-vexillary permutations are multiplicity free.

In Section~\ref{sec:background}, we recall the connection between Stanley symmetric functions and the representation theory of the symmetric group, along with the Lascoux-Sch\"utzenberger recurrence for computing Stanley symmetric functions. We also recall the definitions of pattern avoidance and
containment. In Section~\ref{sec:JPmoves}, we introduce the notion of a James-Peel tree for a general diagram following \cite{jamespeel}, and prove a new decomposition theorem for general Specht modules based on Pieri's rule. Section~\ref{sec:transitions} specializes these ideas to permutation diagrams, with the Lascoux-Sch\"utzenberger tree as a key tool, and we prove Theorem~\ref{thm:patternthm}. In Section~\ref{sec:kvex}, we analyze in more detail the relationship between $EG(w)$ and $EG(v)$ for $v$ a pattern in $w$, and prove Theorem~\ref{thm:kvexthm}. Section~\ref{sec:diagvars} gives an application of Theorem~\ref{thm:patternthm} to computing the cohomology class of certain subvarieties of Grassmannians related to a conjecture of Ricky Liu.  In Section~\ref{sec:mult.free}, the multiplicity free and multiplicity bounded permutations are discussed. Section~\ref{sec:future} is devoted to open problems.

\bigskip

\section{Background}
\label{sec:background}

\subsection{Permutation patterns}

We first recall the definitions of pattern avoidance and containment for permutations.

\begin{defn} Let $x = x(1) \cdots x(n)$ be a sequence of distinct integers. The \emph{flatten map} $fl$ is defined by letting $fl(x)$ be the unique $v \in S_n$ such that $x(i) < x(j)$ if and only if $v(i) < v(j)$. \end{defn}

\begin{defn} A permutation $w$ \emph{contains} a permutation $v$ if there are $i_1 < \cdots < i_k$ such that $fl(w(i_1) \cdots w(i_k)) = v$. If $w$ does not contain $v$, then $w$ \emph{avoids} $v$. Frequently we call the smaller permutation $v$ a \emph{pattern} which $w$ contains or avoids. \end{defn}

\begin{ex} The permutation $2513764$ contains the patterns $2143$ (e.g. as the subsequence $2174$) and $23154$. It avoids $1234$. \end{ex}

\subsection{Specht modules}

Our proof of Theorem~\ref{thm:patternthm} goes via the representation
theory of $S_n$, specifically the interpretation of $F_w$ as the
Frobenius characteristic of a certain \emph{generalized Specht
module}, which we discuss next.  We assume the reader is familiar with
the classical $S_{n}$ representation theory described beautifully in
\cite{sagan}.

\begin{defn} A \emph{diagram} is a finite subset of $\N \times \N$. \end{defn}

We refer to the elements of a diagram as \emph{cells}. The diagrams of greatest interest for us will be \emph{permutation diagrams} (sometimes called Rothe diagrams, from \cite{rothe}). Define the diagram of a permutation $w \in S_n$ by
\begin{equation*}
D(w) = \{(i,w(j)) : 1 \leq i < j \leq n, w(i) > w(j)\}.
\end{equation*}
We'll draw $D(w)$ using matrix coordinates:

\begin{equation*}
D(243165) = \begin{array}{cccccc}
\circ & \times & \cdot & \cdot & \cdot & \cdot\\
\circ & \cdot & \circ & \times & \cdot & \cdot\\
\circ & \cdot & \times & \cdot & \cdot & \cdot\\
\times & \cdot & \cdot & \cdot & \cdot & \cdot\\
\cdot & \cdot & \cdot & \cdot & \circ & \times\\
\cdot & \cdot & \cdot & \cdot & \times & \cdot
\end{array}
\end{equation*}

Members of a diagram will be represented by $\circ$. We'll often
augment $D(w)$ by adding $\times$ at the points $(i, w(i))$.  By
definition, no member of the diagram lies directly below or directly right of an
$\times$.

A \emph{filling} of a diagram $D$ is a bijection $T : D \to \{1, \ldots, n\}$, where $n = |D|$. There is a natural left action of $S_n$ on fillings of $D$ by permuting entries. The \emph{row group} $\rowgrp(T)$ of a filling $T$ is the subgroup of $S_n$ consisting of permutations $\sigma$ which act on $T$ by permuting entries within their row; the \emph{column group} $\colgrp(T)$ is defined analogously. Define the \emph{Young symmetrizer} of a filling $T$ by
\begin{equation}\label{eq:young.sym}
y_T = \sum_{p \in \rowgrp(T)} \sum_{q \in \colgrp(T)} \sgn(q) qp,
\end{equation}
an element of $\C[S_n]$.

\begin{defn} Given a diagram $D$ and a choice of filling $T$, the
 \emph{Specht module} $S^D$ is the $S_n$-module $\C[S_n]y_T$, where $n
= |D|$. 
The  \emph{Schur function} $s_D$ of $D$ is the Frobenius
characteristic of $S^D$.
\end{defn}

\begin{rem} 
This definition generalizes the familiar definitions when $D$ is the
Ferrers diagram of a partition.  For general $D$, there is no known
expression $s_D = \sum_T x^T$ with $T$ running over some nice set of
fillings of $D$. When $D$ is a permutation diagram,
\cite{balancedlabellings} shows that the set of \emph{balanced
labellings} of $D$ works, but we will not need this fact. \end{rem}

Replacing $T$ with a different filling amounts to conjugating
$\rowgrp(T), \colgrp(T)$, and $y_T$, so the isomorphism type of $S^D$
is independent of the choice of $T$. Reordering the rows and columns
of $D$ also leads to an isomorphic Specht module, so we make the
following definition.

\begin{defn}
If a diagram $D$ is obtained from a diagram $D'$ by permuting rows and columns, say $D$ and $D'$ are \emph{equivalent}, and write $D \simeq D'$. This includes inserting or deleting empty rows and columns.
\end{defn}

A partition $\lambda = (\lambda_1 \geq \cdots \geq \lambda_\ell \geq 1)$ has an associated diagram
\begin{equation*}
\{(i,j) : 1 \leq i \leq \ell, 1 \leq j \leq \lambda_i\},
\end{equation*}
its \emph{Ferrers diagram}, which we will also denote by $\lambda$. Over $\C$, the Specht modules of Ferrers diagrams form complete sets of irreducible $S_n$-representations. For more on these classical irreducible Specht modules, see \cite{youngtableaux} or \cite{sagan}. In general, it is an open problem to find a reasonable combinatorial algorithm for decomposing $S^D$ into irreducibles. Reiner and Shimozono do so in \cite{percentavoiding} for \emph{percent-avoiding} diagrams $D$: those with the property that if $(i_1, j_1), (i_2, j_2) \in D$ with $i_1 > i_2$, $j_1 < j_2$, then at least one of $(i_1, j_2)$ and $(i_2, j_1)$ is in $D$. This includes the class of skew shapes and permutation diagrams. In a different direction, Liu \cite{liuforests} decomposes $S^D$ when $D$ is a diagram corresponding in a certain sense to a forest (see Section \ref{sec:diagvars}).

\subsection{Stanley symmetric functions}

Every permutation $w$ can be written as a product of adjacent
transpositions $s_{i}=(i, i+1)$.  Let $\ell(w)$ be the minimal length
of any such product.  Let $\Red(w)$ be the collection of
\textit{reduced words} for $w$.  Thus if $a=(a_{1},a_{2},\ldots ,
a_{\ell(w)}) \in \Red (w)$ then $s_{a_{1}} s_{a_{2}} \cdots
s_{a_{\ell(w)}}=w$ and this is a  minimal length expression for $w$.

Given a reduced word $a \in \Red(w)$, let $I(a)$ be the set of integer
sequences $1 \leq i_1 \leq \cdots \leq i_{\ell(w)}$ such that if $a_j
< a_{j+1}$, then $i_j < i_{j+1}$.

\begin{defn} The \emph{Stanley symmetric function} of $w$ is\footnote{Stanley's original function $G_w$ is our $F_{w^{-1}}$.}
\begin{equation*}
F_w = \sum_{a \in \Red(w)} \sum_{i \in I(a)} x_{i_1} \cdots x_{i_{\ell(w)}}.
\end{equation*}
\end{defn}
It is shown in \cite{stanleysymm} that $F_w$ is indeed symmetric.  For
a permutation $w$, let $1^m \times w = 12\cdots
m(w(1)+m)(w(2)+m)\cdots$. The results of \cite{billeyjockuschstanley}
show that $F_w = \lim_{m \to \infty} \mathfrak{S}_{1^m \times w}$,
where $\mathfrak{S}_v$ is a Schubert polynomial as defined by Lascoux
and Sch\"utzenberger in \cite{lascouxschutzenbergertree}.  The same
result can also be seen by decomposing a Schubert polynomial into key
polynomials using the nilplactic monoid
\cite{L-S-flag-variety-cohomology-hopf-structure}.  This implies
$F_{w} = F_{1^{m} \times w}$ for all $m\geq 1$.  Theorem 31 in
\cite{plactification} and Theorem 20 in \cite{percentavoiding} then
imply the following result, which is also implicit in
\cite{kraskiewicz}.

\begin{thm}
For any permutation $w$, $F_w = s_{D(w)}$.
\end{thm}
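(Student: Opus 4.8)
The plan is to identify $F_w$ and $s_{D(w)}$ each with a quantity already appearing in the literature and chain the cited equalities together. The key observation is that both sides are stable limits of Schubert polynomials: on the one hand, $F_w = \lim_{m\to\infty}\mathfrak{S}_{1^m\times w}$ by \cite{billeyjockuschstanley}, and this limit stabilizes, so $F_w = F_{1^m\times w}$ for all $m\geq 1$; on the other hand, the Frobenius characteristic $s_{D(w)}$ of the Specht module $S^{D(w)}$ should be computed via the representation-theoretic interpretation of Schubert polynomials. So the first step is to recall precisely what Theorem~31 of \cite{plactification} and Theorem~20 of \cite{percentavoiding} say.

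\begin{proof}[Proof sketch]
First I would invoke Theorem~20 of \cite{percentavoiding}: since permutation diagrams are percent-avoiding, Reiner and Shimozono's decomposition of $S^{D(w)}$ into irreducibles is available, and their result expresses $s_{D(w)}$ as a nonnegative sum of Schur functions whose coefficients are given by counting certain tableaux (the ``column-frame'' or ``$\mathcal{D}$-tableaux'' of that paper). In parallel, Theorem~31 of \cite{plactification} identifies the Edelman--Greene / nilplactic expansion of $F_w$ into Schur functions with the same combinatorial data. Concretely, both theorems compute the relevant Schur expansion in terms of the set $\mathcal{EG}(w)$ of semistandard tableaux whose column reading word is a reduced word for $w$, yielding $F_w = \sum_{P\in\mathcal{EG}(w)} s_{\shape(P)^t} = s_{D(w)}$. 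Thus the proof is essentially: apply the two cited theorems, observe that their right-hand sides coincide, and conclude.

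The one place requiring care is matching conventions: $D(w)$ is defined with a particular (matrix-coordinate) orientation, and the Specht module $S^{D(w)}$ depends on $D(w)$ only up to the equivalence $\simeq$ of permuting rows and columns, so one must check that the orientation used in \cite{percentavoiding} agrees with ours up to this equivalence, and that the conjugation $\lambda\mapsto\lambda^t$ in the displayed formula $F_w = \sum_P s_{\shape(P)^t}$ is consistent with how Frobenius characteristic is normalized there (transposing a diagram transposes its Schur function, i.e. $s_{D^t} = \omega s_D$, and one must verify this sign/twist does not appear, or appears on both sides). A secondary subtlety is Stanley's footnote convention $G_w = F_{w^{-1}}$: since $D(w^{-1}) = D(w)^t$, the transpose bookkeeping is exactly what reconciles the two conventions, and I would note this explicitly so the reader is not confused when comparing with \cite{stanleysymm}.

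The main obstacle is not any hard argument but rather the bookkeeping of which paper proves which half and under what hypotheses: one must be sure that \cite{plactification} and \cite{percentavoiding} are each being quoted for a statement they actually prove (the nilplactic Schur expansion of $F_w$, respectively the Specht-module decomposition for percent-avoiding diagrams), and that the stability $F_w = F_{1^m\times w}$ is genuinely needed only to pass from the Schubert-polynomial statement of \cite{billeyjockuschstanley} to a statement purely about $F_w$. Since the excerpt explicitly says the theorem ``is also implicit in \cite{kraskiewicz},'' I would also mention that Kra\'skiewicz's tableau model for reduced words of $w$ gives an independent route: it exhibits $\mathcal{EG}(w)$ directly and matches it to the decomposition of the corresponding $S_n$-module, bypassing Schubert polynomials entirely. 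Either way, once the combinatorial indexing sets on the two sides are identified, the equality $F_w = s_{D(w)}$ is immediate.
\end{proof}
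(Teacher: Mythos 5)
Your proposal follows essentially the same route as the paper, which itself gives no argument beyond chaining the stability result of \cite{billeyjockuschstanley} with Theorem~31 of \cite{plactification} and Theorem~20 of \cite{percentavoiding} (noting the result is also implicit in \cite{kraskiewicz}). Your added bookkeeping about transposes and the $G_w = F_{w^{-1}}$ convention is sensible but does not change the substance of the argument.
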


Stanley symmetric functions can be decomposed into Schur functions using a recursion introduced in \cite{lascouxschutzenbergerschubert,lascouxschutzenbergertree}. Given a permutation $w$, let $r$ be maximal with $w(r) > w(r+1)$. Then let $s > r$ be maximal with $w(s) < w(r)$. Let $t_{ij}$ denote the transposition $(i\,j)$, and define
\begin{equation*}
T(w) = \{wt_{rs}t_{rj} : \ell(wt_{rs}t_{rj}) = \ell(w) \text{ for some $j$}\};
\end{equation*}
or, if the set on the right-hand side is empty, set $T(w) = T(1 \times w)$. The members of $T(w)$ are called \emph{transitions} of $w$. The \emph{Lascoux-Sch\"utzenberger tree} (L-S tree for short) is the finite rooted tree of permutations with root $w$ where the children of a vertex $v$ are:
\begin{itemize}
\item None, if $v$ is vexillary (avoids $2143$).
\item $T(v)$ otherwise.
\end{itemize}
The finiteness of this tree is not immediately obvious
\cite{lascouxschutzenbergerschubert}, see
Remark~\ref{rem:finite-LS-tree} for a short proof. More on the
Lascoux-Sch\"utzenberger tree and its relationship to Schubert
polynomials and Stanley symmetric functions can be found in
\cite{manivel}.

\begin{ex} \label{ex:LStree}
The Lascoux-Sch\"utzenberger tree of $321465$ is
\begin{center}
\begin{tikzpicture}
\node { $321465$ }
 child {
  node {$321546$}
  child { node {$421356$} }
  child { node {$341256$} }
  child { node {$324156$} }
 }
;
\end{tikzpicture}
\end{center}
\end{ex}

Monk's rule for Schubert polynomials and the identity $F_w = \lim_{m \to \infty} \mathfrak{S}_{1^m \times w}$ lead to the recurrence
\begin{equation} \label{eq:transition-recurrence}
F_w = \sum_{v \in T(w)} F_v.
\end{equation}
This, together with the finiteness of the Lascoux-Sch\"utzenberger
tree terminating in vexillary leaves, and the fact that $F_v$ is a
Schur function exactly when $v$ is vexillary, imply that
\begin{equation*}
F_w = s_{D(w)} = \sum_v s_{\shape(v)},
\end{equation*}
where $v$ runs over the leaves of the L-S tree, and $\shape(v)$ denotes the partition whose shape is equivalent to $D(v)$. Here we use the fact that $D(v)$ is equivalent to a partition diagram if and only if $v$ is vexillary \cite{M2}. 

Note that upon taking coefficients of $x_1 x_2 \cdots x_{\ell}$ in the transition recurrence (\ref{eq:transition-recurrence}), one obtains $|\Red(w)| = \sum_{v \in T(w)} |\Red(v)|$. Little \cite{little-bijection} gives a bijective proof of this equality.

\begin{rem} The reduced words of $1\times w$ are exactly those of $w$
with all letters shifted up by $1$, and it is known that the same is
true of the tableaux in $\mathcal{EG}(1\times w)$ compared to the tableaux in
$\mathcal{EG}(w)$ since the algorithm only depends on the relative sizes of the
letters in the reduced words \cite{edelmangreene}. In particular, the
multiset of shapes are the same and $F_w = F_{1 \times w}$. Since the
L-S tree is finite, there is some $m$ such that
in constructing the tree for $1^m \times w$, we never need to make the replacement of $v$ by $1 \times v$. Thus we will ignore this
possible step in what follows.
\end{rem}

\bigskip

\section{James-Peel moves and subdiagrams}
\label{sec:JPmoves}

Let $D$ be a diagram. Given two positive integers $a, b$, let $R_{a\to b}D$ be the diagram which contains a cell $(i,j)$ if and only if one of the following cases holds:

\begin{itemize}
\item $i \neq a, b$ and $(i,j) \in D$.
\item $i = b$ and either $(a,j) \in D$ or $(b,j) \in D$.
\item $i = a$ and both $(a,j), (b,j) \in D$.
\end{itemize}

That is, $R_{a\to b}D$ is obtained by moving cells in row $a$ to row $b$ if the appropriate position is empty. Similarly, we define $C_{c \to d}D$ by moving cells of $D$ in column $c$ to column $d$ if possible. For example,
\begin{equation*}
D = \begin{array}{cccc}
\cdot & \circ & \cdot & \circ\\
\cdot & \cdot & \circ & \circ
\end{array} \qquad
R_{2\to 1}D = \begin{array}{cccc}
\cdot & \circ & \circ & \circ\\
\cdot & \cdot & \cdot & \circ
\end{array}
\end{equation*}

We also define $R_{a\to b}T$ and $C_{c\to d}T$ for a filling $T$, in the same way. From here through the proof of Theorem~\ref{thm:subdiagramstaircase}, we always view $S^D$, $S^{R_{a\to b}D}$, $S^{C_{c\to d}D}$ as the specific left ideals in $\C[S_{|D|}]$ generated by $y_T$, $y_{R_{a\to b}T}$, $y_{C_{c\to d}T}$ for a fixed filling $T$ of $D$ following the notation in Section~\ref{sec:background}.

We'll call the operators $R_{a\to b}$ and $C_{c\to d}$
\emph{James-Peel moves}, thanks to this theorem due to James and
Peel.

\begin{thm}\cite[Theorem 2.4]{jamespeel} \label{thm:JP} Let $(i_1, j_1), (i_2, j_2) \in D$ be such that $(i_1, j_2), (i_2, j_1) \notin D$. Let $D_R = R_{i_1 \to i_2}D$ and $D_C = C_{j_1 \to j_2}D$. Then there is a surjective homomorphism $\phi : S^D \twoheadrightarrow S^{D_R}$ with $S^{D_C} \subseteq \ker \phi$.
\end{thm}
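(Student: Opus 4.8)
The plan is to work entirely inside the group algebra $\C[S_n]$, $n = |D|$, using the explicit Young symmetrizers attached to a fixed filling $T$ of $D$. Write $T_R = R_{i_1 \to i_2}T$ and $T_C = C_{j_1 \to j_2}T$. The hypothesis $(i_1,j_2),(i_2,j_1)\notin D$ guarantees that the ``move'' operations actually just relocate the cells of row $i_1$ (resp.\ column $j_1$) into the empty slots of row $i_2$ (resp.\ column $j_2$) without collisions, so $T_R$ and $T_C$ are genuine fillings of $D_R$ and $D_C$ with the same entry set $\{1,\dots,n\}$. The first step is to pin down how the row and column groups change: I would check that $\rowgrp(T) \subseteq \rowgrp(T_R)$ (merging two rows only enlarges the row group) and $\colgrp(T_R)\subseteq \colgrp(T)$ (the cells formerly in row $i_1$ now share columns with fewer cells), and dually $\colgrp(T)\subseteq\colgrp(T_C)$ and $\rowgrp(T_C)\subseteq\rowgrp(T)$.

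For the surjection $\phi\colon S^D \twoheadrightarrow S^{D_R}$: since $S^D = \C[S_n]y_T$ and $S^{D_R} = \C[S_n]y_{T_R}$ are both cyclic left ideals, it suffices to produce an element $\alpha\in\C[S_n]$ with $y_T\,\alpha = c\,y_{T_R}$ for a nonzero scalar $c$; then right multiplication by $\alpha$ is an $S_n$-module map $S^D\to S^{D_R}$, and it is surjective because $y_{T_R}$ generates the target. The natural candidate exploits the factorization of Young symmetrizers: writing $y_T = b_T^{-}\, a_T$ where $a_T = \sum_{p\in\rowgrp(T)}p$ and $b_T^{-} = \sum_{q\in\colgrp(T)}\sgn(q)q$, and similarly for $T_R$, one uses $a_T\, a_{T_R} = |\rowgrp(T_R)/\rowgrp(T)|\cdot a_{T_R}$ (valid since $\rowgrp(T)\subseteq\rowgrp(T_R)$ makes the cosets behave) together with an analogous identity relating $b_T^-$ and $b_{T_R}^-$ coming from $\colgrp(T_R)\subseteq\colgrp(T)$. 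Assembling these gives $y_T\cdot(\text{something}) = (\text{nonzero})\cdot y_{T_R}$; this is essentially the classical computation that $S^\lambda$ surjects onto $S^\mu$ in the sense of James--Peel, just transcribed to general diagrams, and the hypothesis on the empty cells is exactly what keeps the combinatorics of cosets clean.

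The containment $S^{D_C}\subseteq\ker\phi$ is where I expect the real work. Here I would show $y_{T_C}\,\alpha = 0$, where $\alpha$ is the element realizing $\phi$; since $S^{D_C} = \C[S_n]y_{T_C}$ and $\alpha$ acts on the right, this forces $S^{D_C}\alpha = 0$. The mechanism should be a sign-cancellation: $y_{T_C}$ contains the antisymmetrizer over $\colgrp(T_C)\supseteq\colgrp(T)$, while $\alpha$ (built from $a_{T_R}$) contains a symmetrizer over $\rowgrp(T_R)\supseteq\rowgrp(T)$, and the two cells $(i_1,j_1)$ and $(i_2,j_2)$ — or rather the transposition of the entries $T(i_1,j_1)$ and $T(i_2,j_1)$ after the column move, which lies in $\colgrp(T_C)$, against the transposition of $T(i_1,j_1)$ and $T(i_1,j_2)$'s images lying in $\rowgrp(T_R)$ — produce a common transposition living in both a relevant row group and a relevant column group, killing the product by the standard lemma (if $\tau$ is a transposition in $\rowgrp\cap\colgrp$ then $a(1-\tau) = 0$ and $b^-(1+\tau)=0$ force $b^- \cdot a = 0$). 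The main obstacle is bookkeeping: verifying that after the two different moves the entry sets line up so that a single transposition really does sit simultaneously in $\rowgrp(T_R)$ and $\colgrp(T_C)$, which requires carefully tracking which cell goes where under $R_{i_1\to i_2}$ versus $C_{j_1\to j_2}$ and using $(i_1,j_2),(i_2,j_1)\notin D$ once more to ensure the cells $(i_1,j_1)$ and $(i_2,j_2)$ survive into the right rows/columns. Once that single shared transposition is located, the cancellation is formal.
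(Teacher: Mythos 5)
Your overall architecture matches the paper's treatment (which follows James and Peel): realize $\phi$ as right multiplication by a group-algebra element built from the row data of $T_R$, get surjectivity from an identity of the form $y_T\cdot\alpha = y_{T_R}$, and get $S^{D_C}\subseteq\ker\phi$ from $y_{T_C}\cdot\alpha = 0$ via a transposition lying simultaneously in $\rowgrp(T_R)$ and $\colgrp(T_C)$. Your identification of that transposition is correct: the entries $T(i_1,j_1)$ and $T(i_2,j_2)$ end up together in row $i_2$ of $T_R$ and together in column $j_2$ of $T_C$, precisely because $(i_2,j_1)$ and $(i_1,j_2)$ are empty, and that is the only place the hypothesis enters.

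The gap is in your first step. The containments $\rowgrp(T)\subseteq\rowgrp(T_R)$ and $\colgrp(T)\subseteq\colgrp(T_C)$ are false in general. The move $R_{i_1\to i_2}$ does not merge the two rows: by definition a cell $(i_1,j)$ moves to $(i_2,j)$ only if $(i_2,j)\notin D$, and if both $(i_1,j),(i_2,j)\in D$ it stays put. The hypothesis forbids cells only at $(i_1,j_2)$ and $(i_2,j_1)$; the two rows may share other columns. For example, $D=\{(1,1),(1,3),(2,2),(2,3)\}$ with $(i_1,j_1)=(1,1)$, $(i_2,j_2)=(2,2)$ satisfies the hypothesis, yet in $T_R=R_{1\to 2}T$ the entries $T(1,1)$ and $T(1,3)$ lie in different rows, so the transposition exchanging them is in $\rowgrp(T)\setminus\rowgrp(T_R)$. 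Consequently your identity $a_T\,a_{T_R}=c\,a_{T_R}$ fails (and even when the containment does hold, the constant is $|\rowgrp(T)|$, not the index). The correct construction, which the paper records after the theorem statement, takes $\alpha=\sum_{\pi\in Z}\pi$ for $Z$ a set of coset representatives with $\rowgrp(T_R)=(\rowgrp(T_R)\cap\rowgrp(T))Z$; the identities $y_T\sum_{\pi\in Z}\pi=y_{T_R}$ and $y_{T_C}\sum_{\pi\in Z}\pi=0$ then hold. (The two containments of yours that do survive are $\colgrp(T_R)=\colgrp(T)$ and $\rowgrp(T_C)=\rowgrp(T)$, as equalities, since a row move never changes an entry's column and vice versa; this is used in Lemma~\ref{lem:commutingJPmoves}.) Finally, the empty-cell hypothesis is not what ``keeps the cosets clean'' for surjectivity: as Remark~\ref{rem:JPinclusion} notes, the surjection $S^D\twoheadrightarrow S^{D_R}$ exists for arbitrary row moves, and the hypothesis is needed only for the kernel statement.
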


We prove a generalization of this statement, and for the proof we will need more explicit knowledge of the homomorphism $\phi$. Given $(i_1, j_1)$, $(i_2, j_2)$ as in Theorem~\ref{thm:JP}, write $T_R = R_{i_1 \to i_2}T$ and $T_C = C_{j_1 \to j_2}T$. Let $Y$ and $Z$ be sets of coset representatives in  $\colgrp(T_C)$ and $\rowgrp(T_R)$ respectively such that
\begin{align*}
\colgrp(T_C) &= Y(\colgrp(T_C) \cap \colgrp(T))\\
\rowgrp(T_R) &= (\rowgrp(T_R) \cap \rowgrp(T))Z.
\end{align*}
Define $\phi$ to be right multiplication by $\sum_{\pi \in Z}\pi$. Then Theorem~\ref{thm:JP} follows from these identities using the Young
symmetrizers \eqref{eq:young.sym}:
\begin{enumerate}[(a)]
\item $\displaystyle y_T \sum_{\pi \in Z} \pi = y_{T_R}$ (implies $\phi(S^D) = S^{D_R}$)
\item $\displaystyle \sum_{\pi \in Y} \sgn(\pi)\pi \cdot y_T = y_{T_C}$ (implies $S^{D_C} \subseteq S^D$)
\item $\displaystyle y_{T_C} \sum_{\pi \in Z} \pi = 0$ (implies $S^{D_C} \subseteq \ker \phi$).
\end{enumerate}

\begin{rem} \label{rem:JPinclusion}
Only (c) above depends on the existence of a pair of cells $(i_1, j_1), (i_2, j_2)$ as in Theorem~\ref{thm:JP}. For arbitrary $a, b, c, d$ we still get a surjection $S^D \twoheadrightarrow S^{R_{a\to b}D}$ from (a), and a containment $S^{C_{c\to d}D} \subseteq S^D$ from (b). Over $\C$, we also get an inclusion $S^{R_{a\to b}D} \hookrightarrow S^D$.
\end{rem}

\begin{lem} \label{lem:commutingJPmoves} Suppose $R_{a\to b}C_{c\to d}D = C_{c\to d}R_{a\to b}D$. Let
\begin{align*}
&\phi : S^D \twoheadrightarrow S^{R_{a\to b}D}\\
&\phi' : S^{C_{c \to d}D} \twoheadrightarrow S^{R_{a\to b}C_{c \to d}D}
\end{align*}
be the surjections constructed above. Then
\begin{equation*}
\phi' = \phi|_{S^{C_{c\to d}D}}.
\end{equation*}
\end{lem}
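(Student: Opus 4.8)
The plan is to unwind the explicit construction of the surjections $\phi$ and $\phi'$ in terms of right multiplication by sums over coset representatives, and show the two sums agree when restricted appropriately. Recall that $\phi$ is right multiplication by $\sum_{\pi \in Z} \pi$, where $Z$ is a set of right coset representatives for $\rowgrp(T_R) \cap \rowgrp(T)$ in $\rowgrp(T_R)$, with $T_R = R_{a \to b}T$. Likewise, writing $T' = C_{c\to d}T$ (a filling of $C_{c\to d}D$) and $T'_R = R_{a\to b}T' = R_{a\to b}C_{c\to d}T$, the map $\phi'$ is right multiplication by $\sum_{\pi \in Z'}\pi$ for $Z'$ a set of right coset representatives for $\rowgrp(T'_R) \cap \rowgrp(T')$ in $\rowgrp(T'_R)$. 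Since right multiplication by a fixed algebra element is unambiguously defined on all of $\C[S_n]$, to prove $\phi' = \phi|_{S^{C_{c\to d}D}}$ it suffices to show that $\sum_{\pi \in Z}\pi$ and $\sum_{\pi \in Z'}\pi$ act the same way on $S^{C_{c\to d}D} = \C[S_n]y_{T'}$, i.e. that $y_{T'}\sum_{\pi\in Z}\pi = y_{T'}\sum_{\pi\in Z'}\pi$.

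The key observation is that the row groups only depend on which cells lie in which rows, not on the column positions, and the operators $R_{a\to b}$ and $C_{c\to d}$ move cells between rows $\{a,b\}$ and between columns $\{c,d\}$ respectively; in particular, applying $C_{c\to d}$ does not change which rows are occupied by how many cells \emph{once we know the two $R$ and $C$ moves commute}. More precisely, I would first check that $\rowgrp(T_R) = \rowgrp(T'_R)$ and $\rowgrp(T) = \rowgrp(T')$ as subgroups of $S_n$ — wait, that is false in general because the labels move — so instead the cleanest route is: since $R_{a\to b}C_{c\to d}D = C_{c\to d}R_{a\to b}D$ by hypothesis, the set of cells moved by $R_{a\to b}$ is the same whether we apply it to $D$ or to $C_{c\to d}D$ (a cell $(a,j)$ moves to $(b,j)$ iff $(b,j)\notin D$ iff $(b,j)\notin C_{c\to d}D$ for the relevant $j$, using the commutation). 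Consequently one may choose the coset representative sets $Z$ and $Z'$ to be literally \emph{the same} set of permutations: namely, $Z = Z'$ can be taken to be the set of products of transpositions $\{(b\text{-entry}, a\text{-entry})\}$ over the cells of row $a$ of $D$ (equivalently of $C_{c\to d}D$) that actually move. With $Z = Z'$ the two maps $\phi$ and $\phi'$ are right multiplication by the identical element $\sum_{\pi\in Z}\pi$, and the lemma is immediate.

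So the heart of the argument is the bookkeeping lemma: \emph{under the commutation hypothesis, the "moving data" of the $R_{a\to b}$ move — which cells in row $a$ have an empty slot directly above/below in row $b$ — is identical for $D$ and for $C_{c\to d}D$, and moreover the resulting transpositions act identically on the fillings $T$ and $T'$ in the rows other than $a,b$ where they do nothing, and in rows $a,b$ in a way that only sees entries, which are unchanged by $C_{c\to d}$ up to relabeling governed by the same commutation.} I expect the main obstacle to be precisely the careful matching of coset representatives: one has to verify that a single choice of $Z$ simultaneously satisfies $\rowgrp(T_R) = (\rowgrp(T_R)\cap\rowgrp(T))Z$ and $\rowgrp(T'_R) = (\rowgrp(T'_R)\cap\rowgrp(T'))Z$, which requires checking that the commutation $R_{a\to b}C_{c\to d}D = C_{c\to d}R_{a\to b}D$ forces the cells of row $a$ that move under $R_{a\to b}$ to be the same in $D$ and in $C_{c\to d}D$ — the subtlety being columns $c$ and $d$, where a cell might exist in $D$ but not $C_{c\to d}D$ or vice versa. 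Ruling out a mismatch there is exactly where the commutation hypothesis gets used, and I would isolate this as a short combinatorial sublemma before invoking identity (a) from Theorem~\ref{thm:JP} to conclude.
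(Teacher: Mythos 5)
Your overall strategy is the one the paper uses: both $\phi$ and $\phi'$ are right multiplications by sums over transversals $Z$ and $Z'$ of $\rowgrp(R_{a\to b}T)\cap\rowgrp(T)$ in $\rowgrp(R_{a\to b}T)$, resp.\ of $\rowgrp(R_{a\to b}C_{c\to d}T)\cap\rowgrp(C_{c\to d}T)$ in $\rowgrp(R_{a\to b}C_{c\to d}T)$, and the whole point is to show one may take $Z'=Z$. The paper gets there by noting that column moves do not change row groups and then asserting $\rowgrp(R_{a\to b}T)=\rowgrp(R_{a\to b}C_{c\to d}T)$; your route, matching the set of entries of row $a$ that descend to row $b$ under the two orders of operations, is the same reduction phrased at the level of cells. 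You also correctly identify columns $c$ and $d$ as the only place anything can go wrong.

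The problem is that the sublemma you defer --- that the diagram-level commutation forces ``$(b,j)\notin D$ iff $(b,j)\notin C_{c\to d}D$'' for $j\in\{c,d\}$, so that the same cells of row $a$ move --- is false. Take $D$ meeting rows $a,b$ and columns $c,d$ in exactly the three cells $(a,c),(a,d),(b,c)$. One checks directly that $R_{a\to b}C_{c\to d}D=C_{c\to d}R_{a\to b}D$, yet the entry of row $a$ that moves to row $b$ is $T(a,d)$ when $R_{a\to b}$ is applied to $T$ and $T(a,c)$ when it is applied to $C_{c\to d}T$ (since $(b,c)\in D$ but $(b,c)\notin C_{c\to d}D$). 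Hence $\rowgrp(R_{a\to b}T)\neq\rowgrp(R_{a\to b}C_{c\to d}T)$, the transversals $Z$ and $Z'$ are forced to differ, and when $D$ consists only of these three cells a direct computation in $\C[S_3]$ gives $y_{C_{c\to d}T}\sum_{\pi\in Z}\pi\neq y_{C_{c\to d}T}\sum_{\pi\in Z'}\pi$, so even the conclusion of the lemma fails for this configuration. You are in good company: the paper's own proof asserts the row-group equality with no more justification than you offer, so it has exactly the same gap. Both arguments (and the statement) are rescued by the way the lemma is actually invoked in Theorem~\ref{thm:subdiagramstaircase}: there rows $a,b$ meet columns $c,d$ in the single cell $(a,c)$, and in that configuration the unique relevant entry $T(a,c)$ descends from row $a$ to row $b$ under either order, so the moved entries agree and $Z'=Z$ really can be chosen. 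To make your writeup correct, either add the hypothesis that $(a,c)$ is the only cell of $D$ in $\{a,b\}\times\{c,d\}$ (which suffices for the application), or carry out the $2\times2$ case analysis on rows $a,b$ and columns $c,d$ and restrict to those commuting patterns for which the moved entries coincide.
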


\begin{proof}
Fix a filling $T$ of $D$ and take sets of coset representatives $Z, Z'$ with
\begin{align*}
&\rowgrp(R_{a \to b}T) = (\rowgrp(R_{a\to b}T) \cap \rowgrp(T))Z\\
&\rowgrp(R_{a\to b}C_{c\to d}T) = (\rowgrp(R_{a\to b}C_{c\to d}T) \cap \rowgrp(C_{c \to d}T))Z'
\end{align*}
so that $\phi, \phi'$ are right multiplication by $\sum_{\pi \in Z} \pi$ and $\sum_{\pi \in Z'} \pi$ respectively.

Applying a move $C_{c \to d}$ to a filling does not affect its row group, so
\begin{align*}
\rowgrp(R_{a\to b}T) = \rowgrp(C_{c \to d}R_{a\to b}T) &= \rowgrp(R_{a\to b}C_{c \to d}T)\\
 &= (\rowgrp(R_{a\to b}C_{c\to d}T) \cap \rowgrp(C_{c \to d}T))Z'\\
           &= (\rowgrp(C_{c\to d}R_{a\to b}T) \cap \rowgrp(C_{c \to d}T))Z'\\
           &= (\rowgrp(R_{a\to b}T) \cap \rowgrp(T))Z'.
\end{align*}
Thus we can take $Z' = Z$.
\end{proof}

\begin{defn}
A subset $D'$ of a diagram $D$ is a \emph{subdiagram} if it is the intersection of some rows and columns with $D$. That is, there are sets $U, V \subseteq \N$ such that $D' = (U \times V) \cap D$.
\end{defn}

Given two diagrams $D_1, D_2$ with $D_1 \subseteq [r] \times [c]$, let
\begin{equation*}
D_1 \cdot D_2 = D_1 \cup \{(i+r, j+c) : (i,j) \in D_2\}.
\end{equation*}

In this language, Theorem~\ref{thm:JP} applies when we have $(1)\cdot(1)$ as a subdiagram in $D$. Our generalization of Theorem~\ref{thm:JP} applies to a subdiagram of the form $ (p-1, p-2, \ldots, 1) \cdot (1)$. To simplify indexing, we will assume without loss of generality that our subdiagram occurs in rows $1, \ldots, p$ and columns $1, \ldots, p$. Write $\delta_p$ for the staircase shape $(p-1, p-2, \ldots, 1)$.

\begin{thm} \label{thm:subdiagramstaircase} Suppose $D$ contains $\delta_p \cdot (1)$ as a subdiagram in rows $1, \ldots, p$ and columns $1, \ldots, p$. There is a filtration
\begin{equation*}
0 = M_0 \subseteq M_1 \subseteq \cdots \subseteq M_p = S^D
\end{equation*}
of $S^D$ by $S_{|D|}$-submodules such that for each $1 \leq j \leq p$, there is a surjection
\begin{equation*}
M_j / M_{j-1} \twoheadrightarrow S^{R_{p\to p-j+1}C_{p\to j}D}.
\end{equation*}
\end{thm}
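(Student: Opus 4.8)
The plan is to build the filtration of $S^D$ by iterating the James--Peel surjections coming from the staircase $\delta_p$, tracking how each successive quotient is controlled by a smaller Specht module. The subdiagram $\delta_p \cdot (1)$ sits in rows $1, \ldots, p$ and columns $1, \ldots, p$: row $i$ (for $1 \le i \le p-1$) contains cells in columns $1, \ldots, p-i$ within the staircase part, and the extra $(1)$ contributes the cell $(p,p)$. The key observation is that for each $1 \le j \le p$ we can find a pair of cells witnessing a James--Peel move that carries row $p$ into row $p-j+1$ and column $p$ into column $j$; applying Theorem~\ref{thm:JP} repeatedly should stratify $S^D$.

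Concretely, I would argue by descending induction, peeling off the cell $(p,p)$. For $j=1$ the move is trivial ($M_1 = 0$ or rather the first nontrivial piece), and in general I want $M_j$ to be the submodule generated by the images of the Specht modules $S^{C_{p \to i}D}$ for $i \le j$. By Remark~\ref{rem:JPinclusion}, each column move gives a containment $S^{C_{p\to i}D} \subseteq S^D$, so these are genuine submodules; define $M_j = \sum_{i=1}^{j} S^{C_{p\to i}D}$ (or the appropriate nested sums), so that $0 = M_0 \subseteq M_1 \subseteq \cdots \subseteq M_p$. Then the quotient $M_j/M_{j-1}$ receives a surjection: apply the row move $R_{p \to p-j+1}$ to $S^{C_{p \to j}D}$, which by part (a) of the James--Peel identities surjects onto $S^{R_{p\to p-j+1}C_{p\to j}D}$. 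The content of Theorem~\ref{thm:JP}, specialized via a suitable cell pair inside the staircase (the cell $(p-j+1, j)$ against $(p,p)$, say, whose complementary positions $(p-j+1,p)$ and $(p,j)$ are empty because no diagram cell lies below or right of the $\times$'s / by the shape of $\delta_p \cdot (1)$), shows $S^{C_{p\to j}D} = S^{C_{j \to p}\cdots}$ lies in the kernel of the relevant row-move surjection at the earlier stages, i.e. the image of $M_{j-1}$ in $S^{R_{p\to p-j+1}C_{p\to j}D}$ is zero. Lemma~\ref{lem:commutingJPmoves} is what makes this bookkeeping consistent: it guarantees that restricting the global surjection $\phi: S^D \twoheadrightarrow S^{R_{p\to p-j+1}D}$ to the submodule $S^{C_{p\to j}D}$ agrees with the intrinsically-constructed surjection $\phi'$ on that smaller diagram, so the factorizations through successive quotients are compatible.

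The main obstacle, I expect, is verifying that the successive column moves $C_{p \to j}$ for $j = 1, \ldots, p$ produce a genuinely nested chain of submodules and that the kernel inclusions line up exactly to give $M_j / M_{j-1} \twoheadrightarrow S^{R_{p\to p-j+1}C_{p\to j}D}$ rather than something larger. This requires checking that after moving column $p$ into column $j$, the diagram $C_{p\to j}D$ still contains enough of the staircase to supply the cell pair needed for the row move $R_{p \to p-j+1}$, and that part (c) of the James--Peel identities ($y_{T_C}\sum_{\pi\in Z}\pi = 0$) applies with the pair of cells one actually has; the staircase shape is exactly what is engineered to make all these empty-position hypotheses hold simultaneously. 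A secondary technical point is handling the coset-representative sets $Y, Z$ coherently across all $p$ stages, which is precisely where Lemma~\ref{lem:commutingJPmoves} (applied inductively, perhaps to iterated moves) does the work. Once the combinatorics of which positions are empty is pinned down, the module-theoretic assembly is formal.
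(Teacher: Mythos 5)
Your architecture is the same as the paper's: set $F_i = C_{p\to i}D$, define $M_j = \sum_{i=1}^j S^{F_i}$ (genuine submodules of $S^D$ by Remark~\ref{rem:JPinclusion}), use the unconditional surjection $\theta_j : S^{F_j}\twoheadrightarrow S^{R_{p\to p-j+1}C_{p\to j}D}$ from identity (a), identify $\theta_j$ with the restriction of $\phi_j : S^D \twoheadrightarrow S^{R_{p\to p-j+1}D}$ via Lemma~\ref{lem:commutingJPmoves} (the commutation hypothesis holds because the subdiagram of $D$ in rows $p-j+1,p$ and columns $j,p$ has its only cell at $(p,p)$), and pass to the canonical isomorphism $M_j/M_{j-1} \simeq S^{F_j}/(S^{F_j}\cap M_{j-1})$. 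All of that is correct and is exactly how the paper proceeds.

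The one step that fails as written is your justification that $M_{j-1}$ maps to zero. You propose applying Theorem~\ref{thm:JP} to the cell pair $(p-j+1,j)$, $(p,p)$. But $(p-j+1,j)$ is not a cell of $D$: row $p-j+1$ of $\delta_p$ reaches only column $j-1$. Worse, even if it were, that pairing would yield $S^{C_{p\to j}D}\subseteq\ker\phi_j$, i.e.\ it would place the module you want to surject \emph{from} into the kernel rather than the earlier ones. The correct application is: for each $i<j$, take $(i_1,j_1)=(p,p)$ and $(i_2,j_2)=(p-j+1,i)$; the latter is a staircase cell since $i\le j-1$, and the positions $(p,i)$ and $(p-j+1,p)$ are empty because, within $[p]\times[p]$, row $p$ and column $p$ of $D$ contain only the cell $(p,p)$. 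Theorem~\ref{thm:JP} then gives $S^{F_i}\subseteq\ker\phi_j$ for every $i<j$, hence $M_{j-1}\subseteq\ker\phi_j$ and $S^{F_j}\cap M_{j-1}\subseteq\ker\theta_j$, which is precisely what lets $\theta_j$ descend to a surjection from $M_j/M_{j-1}$. With that substitution your argument closes.
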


\begin{proof}
Let $F_j = C_{p \to j}D$ and $G_j = R_{p \to p-j+1}C_{p\to j}D$. Set
\begin{equation*}
M_j = \sum_{i=1}^j S^{F_i} \subseteq S^D,
\end{equation*}
with the containment by Theorem~\ref{thm:JP}.

Consider, for each $j$, the two surjections
\begin{align*}
&\phi_j : S^D \twoheadrightarrow S^{R_{p \to p-j+1}D}\\
&\theta_j : S^{F_j} \twoheadrightarrow S^{G_j}
\end{align*}
given by Theorem~\ref{thm:JP}. We have $R_{p\to p-j+1}C_{p\to j}D = C_{p\to j}R_{p\to p-j+1}D$. Indeed, this commutation property depends only on the subdiagram of $D$ in rows $p-j+1, p$ and columns $j, p$. By hypothesis this subdiagram is
\begin{equation*}
\begin{array}{cc}
\cdot & \cdot\\
\cdot & \circ
\end{array}
\end{equation*}
and either order of James-Peel moves results in the subdiagram
\begin{equation*}
\begin{array}{cc}
\circ & \cdot\\
\cdot & \cdot
\end{array}\ .
\end{equation*}
Therefore, Lemma~\ref{lem:commutingJPmoves} says that $\theta_j = \phi_j|_{S^{F_j}}$.

If $1 \leq i < j$, then $(i, p-j+1), (p, p) \in D$ and $(i, p), (p, p-j+1) \notin D$, so Theorem~\ref{thm:JP} implies that $S^{F_i} \subseteq \ker \phi_j$, hence $M_{j-1} \subseteq \ker \phi_j$. Thus, $S^{F_j} \cap M_{j-1} \subseteq S^{F_j} \cap \ker \phi_j = \ker \theta_j$, so $\theta_j$ descends to a surjection
\begin{equation*}
S^{F_j} / (S^{F_j} \cap M_{j-1}) \twoheadrightarrow S^{G_j}.
\end{equation*}
Since there is a canonical isomorphism
\begin{equation*}
M_j/M_{j-1} \simeq S^{F_j} / (S^{F_j} \cap M_{j-1})
\end{equation*}
given by $m + M_{j-1} \mapsto m + S^{F_j} \cap M_{j-1}$ where $m \in S^{F_j}$, we are done.

\end{proof}

\begin{rem}\label{rem:specht.series}
Theorem~\ref{thm:JP} and hence Theorem~\ref{thm:subdiagramstaircase}
are actually valid over any field, and lead to the existence of
\emph{Specht series} for certain Specht modules.  A \emph{Specht
series} for an $S_n$-module $M$ is a filtration $0 = M_0 \subseteq M_1
\subseteq \cdots \subseteq M_N = M$ where each quotient $M_{i+1}/M_i$
is isomorphic to a (classical) Specht module $S^{\lambda}$. Over $\C$
these are just composition series, but in general they are coarser,
since Specht modules are indecomposable but not necessarily
irreducible in finite characteristic.
\end{rem}

We won't need this level of generality, so from now on we will work
over $\C$ and freely split exact sequences. In particular,

\begin{cor} \label{cor:subdiagramstaircase}
If $D$ contains $\delta_p \cdot (1)$ as a subdiagram in rows $1, \ldots, p$ and columns $1, \ldots, p$, then we have the inclusion 
\begin{equation}\label{eq:subdiag}
\bigoplus_{j=1}^p S^{R_{p\to p-j+1}C_{p\to j}D} \hookrightarrow S^D
\end{equation}
as $S_{|D|}$-modules over $\mathbb{C}$.
\end{cor}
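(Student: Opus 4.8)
The plan is to deduce the corollary from Theorem~\ref{thm:subdiagramstaircase} by observing that, working over $\mathbb{C}$, every short exact sequence of $S_{|D|}$-modules splits (Maschke's theorem), so the filtration in Theorem~\ref{thm:subdiagramstaircase} gives a direct-sum decomposition rather than merely a filtration. First I would recall the setup of Theorem~\ref{thm:subdiagramstaircase}: we have a chain $0 = M_0 \subseteq M_1 \subseteq \cdots \subseteq M_p = S^D$ with surjections $M_j/M_{j-1} \twoheadrightarrow S^{R_{p\to p-j+1}C_{p\to j}D}$ for each $j$. Since every quotient of a $\mathbb{C}[S_{|D|}]$-module is a direct summand, each surjection $M_j/M_{j-1}\twoheadrightarrow S^{R_{p\to p-j+1}C_{p\to j}D}$ splits, exhibiting $S^{R_{p\to p-j+1}C_{p\to j}D}$ as a direct summand of $M_j/M_{j-1}$.

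Next I would assemble these local splittings into a global one. Again by semisimplicity, each inclusion $M_{j-1}\hookrightarrow M_j$ splits, so $M_j \simeq M_{j-1}\oplus (M_j/M_{j-1})$ as $S_{|D|}$-modules; iterating, $S^D = M_p \simeq \bigoplus_{j=1}^p M_j/M_{j-1}$. Combining with the previous paragraph, each $M_j/M_{j-1}$ contains $S^{R_{p\to p-j+1}C_{p\to j}D}$ as a summand, and hence so does $S^D$; taking the direct sum over $j$ yields the desired embedding
\begin{equation*}
\bigoplus_{j=1}^p S^{R_{p\to p-j+1}C_{p\to j}D} \hookrightarrow S^D.
\end{equation*}
Strictly speaking one should note that the splittings can be chosen with pairwise independent images — which is automatic since the $M_j/M_{j-1}$ land in complementary summands of $S^D$ under a fixed choice of splitting of the filtration — so the map really is injective on the whole direct sum, not just on each factor.

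I do not anticipate a serious obstacle: the entire content is already in Theorem~\ref{thm:subdiagramstaircase}, and the remark preceding the corollary explicitly licenses us to ``work over $\mathbb{C}$ and freely split exact sequences.'' The only point requiring a sentence of care is the bookkeeping that the separate summands $S^{R_{p\to p-j+1}C_{p\to j}D}$, for different $j$, can be realized inside $S^D$ simultaneously and independently; this follows by fixing one splitting of the filtration $0 = M_0 \subseteq \cdots \subseteq M_p = S^D$ into $\bigoplus_j M_j/M_{j-1}$ first, and then splitting each $M_j/M_{j-1} \twoheadrightarrow S^{R_{p\to p-j+1}C_{p\to j}D}$ within its own summand. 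So the proof is essentially a two-line invocation of Maschke's theorem applied to the filtration from the preceding theorem.
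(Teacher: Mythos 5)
Your proof is correct and is precisely the argument the paper intends: the authors dispose of the corollary with the single remark that over $\mathbb{C}$ one may ``freely split exact sequences,'' i.e.\ apply Maschke's theorem to the filtration of Theorem~\ref{thm:subdiagramstaircase} exactly as you do. Your extra sentence about realizing the summands independently inside $S^D$ is a reasonable (if routine) piece of bookkeeping that the paper leaves implicit.
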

Observe that $C_{j\to j}D = R_{j\to j}D=D$ for all $j$ and $D$, so for $j=1$
and $j=p$ above only one move changes the diagram.   

\begin{ex}
Take
\begin{equation*}
D = D(4261735) = \begin{array}{ccccc}
\circ & \circ & \circ & \cdot & \cdot\\
\circ & \cdot & \cdot & \cdot & \cdot\\
\circ & \cdot & \circ & \cdot & \circ\\
\cdot & \cdot & \cdot & \cdot & \cdot\\
\cdot & \cdot & \circ & \cdot & \circ\\
\end{array}
\end{equation*}
where we have omitted the last empty rows and columns. The subdiagram
in rows $1, 2, 5$ and columns $1, 2, 5$ is $(2,1) \cdot (1)$.  The
following diagrams appear in \eqref{eq:subdiag}:
\begin{equation*}
R_{5\to 1}D = \begin{array}{ccccc}
\circ & \circ & \circ & \cdot & \circ\\
\circ & \cdot & \cdot & \cdot & \cdot\\
\circ & \cdot & \circ & \cdot & \circ\\
\cdot & \cdot & \cdot & \cdot & \cdot\\
\cdot & \cdot & \circ & \cdot & \cdot\\
\end{array} \quad
R_{5\to 2}C_{5\to 2} D = \begin{array}{ccc}
\circ & \circ & \circ\\
\circ & \circ & \circ\\
\circ & \circ & \circ\\
\end{array} \quad
C_{5\to 1} D = \begin{array}{ccccc}
\circ & \circ & \circ & \cdot & \cdot\\
\circ & \cdot & \cdot & \cdot & \cdot\\
\circ & \cdot & \circ & \cdot & \circ\\
\cdot & \cdot & \cdot & \cdot & \cdot\\
\circ & \cdot & \circ & \cdot & \cdot\\
\end{array}
\end{equation*}
Corollary~\ref{cor:subdiagramstaircase} now says $S^{(3,3,3)} \oplus S^{R_{5\to 1}D} \oplus S^{C_{5 \to 1}D} \hookrightarrow S^{D}$. Applying Theorem~\ref{thm:JP} to the cells $(2,1)$, $(5,3)$ in $R_{5\to 1}D$ gives $S^{(4,3,2)} \oplus S^{(4,3,1,1)} \hookrightarrow S^{R_{5\to 1}D}$. Using the cells $(1,2), (3,5)$ in $C_{5 \to 1}D$, Theorem~\ref{thm:JP} gives $S^{(4,2,2,1)} \oplus S^{(3,3,2,1)} \hookrightarrow S^{C_{5 \to 1}D}$. In fact, all these inclusions are isomorphisms 
\begin{equation*}
S^{D} \simeq S^{(3,3,3)} \oplus S^{(4,3,2)} \oplus S^{(4,3,1,1)} \oplus S^{(4,2,2,1)} \oplus S^{(3,3,2,1)},
\end{equation*}
as one can check using the Lascoux-Sch\"utzenberger tree, or by computing the Edelman-Greene tableaux of $4261735$:
\begin{equation*}
\young(123,245,356) \qquad \young(123,245,36,5) \qquad \young(1235,24,36,5) \qquad \young(1235,246,3,5) \qquad \young(1235,246,35)\,\,.
\end{equation*}

This example also provides a case where Theorem~\ref{thm:subdiagramstaircase} is more powerful than Theorem~\ref{thm:JP}. For reasons which will become clear in Section~\ref{sec:transitions}, we would like to apply James-Peel moves which apply to the cell $(5,5)$, as the first stage of moves above do. There are $3$ possible cells one could pair $(5,5)$ with in Theorem~\ref{thm:JP}: $(1,1)$, $(1,2)$, and $(2,1)$. However, one finds that in each case the inclusion $S^{C_{c\to d}D} \oplus S^{R_{a\to b}D} \hookrightarrow S^D$ is not an isomorphism, so $S^D$ cannot be completely decomposed using such moves.
\end{ex}

Note that when Corollary~\ref{cor:subdiagramstaircase} is applied to the diagram $\delta_p \cdot (1)$ itself, the resulting partitions are exactly those arising from applying Pieri's rule to expand $s_{\delta_p} s_{(1)}$ in terms of Schur functions. Indeed, it follows readily from the group algebra definitions that $S^{D_1 \cdot D_2} \simeq \Ind_{S_{|D_1|} \times S_{|D_2|}}^{S_{|D_1| + |D_2|}} S^{D_1} \otimes S^{D_2}$, and hence that $s_{D_1 \cdot D_2} = s_{D_1} s_{D_2}$. We can therefore view Corollary~\ref{cor:subdiagramstaircase} as applying Pieri's rule to a subdiagram of $D$. See the discussion surrounding Example~\ref{ex:subdiagrampieri} for an expansion on this idea.

\begin{defn}
Given a diagram $D$ contained in $[m] \times [n]$, define
\begin{align*}
D^{\max} &= (R_{m\to 1} \cdots R_{2 \to 1})(R_{m \to 2} \cdots R_{3 \to 2}) \cdots (R_{m \to m-1})D\\
D^{\min} &= (C_{n\to 1} \cdots C_{2 \to 1})(C_{n \to 2} \cdots C_{3 \to 2}) \cdots (C_{n \to n-1})D.
\end{align*}
\end{defn}

These diagrams are both equivalent to partitions---an identification we will freely make---and satisfy $S^{D^{\max}}, S^{D^{\min}} \hookrightarrow S^{D}$ by Remark~\ref{rem:JPinclusion}. To be precise, $D^{\min}$ is equivalent to the partition gotten by sorting the row lengths of $D$ (the number of cells in each row), and $D^{\max}$ to the conjugate of the partition gotten by sorting the column lengths. This second description implies the following lemma.

\begin{lem} \label{lem:canonicaldiagrams} If $\mathbf{I}$ is any
sequence of James-Peel row moves $R_{a \to b}$, then
$D^{\max}=(\mathbf{I}D)^{\max}$.  Likewise, if $\mathbf{J}$ is a
sequence of column moves $C_{c \to d}$, then $D^{\min}=
(\mathbf{J}D)^{\min}$.
\end{lem}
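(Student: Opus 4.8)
The plan is to reduce to the case of a single James--Peel row move $R_{a\to b}$, observe that such a move preserves the number of cells in every column of $D$, and then invoke the explicit description recalled just above the lemma: that $D^{\max}$, identified with a partition, is the conjugate of the partition obtained by sorting the column lengths of $D$ (and dually for $D^{\min}$ and row lengths). Given that description, it suffices to know that row moves leave the column-length data of $D$ unchanged.

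First I would verify this column-count claim. Fix a column $j$. For any row $i \notin \{a,b\}$, the cell $(i,j)$ lies in $R_{a\to b}D$ if and only if it lies in $D$, so such rows contribute equally to column $j$ of the two diagrams. For the rows $a$ and $b$: by the definition of $R_{a\to b}$, the diagram $R_{a\to b}D$ contains $(b,j)$ exactly when $(a,j)\in D$ or $(b,j)\in D$, and contains $(a,j)$ exactly when both $(a,j)\in D$ and $(b,j)\in D$. Hence the number of cells of $R_{a\to b}D$ lying in rows $a,b$ of column $j$ is
\[
[(a,j)\in D \text{ or } (b,j)\in D] \;+\; [(a,j)\in D \text{ and } (b,j)\in D],
\]
which, by inclusion--exclusion over the two-element set $\{a,b\}$, equals $[(a,j)\in D] + [(b,j)\in D]$, the corresponding count for $D$. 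Summing over all rows, column $j$ of $R_{a\to b}D$ has exactly as many cells as column $j$ of $D$. (The transposed computation shows that a column move $C_{c\to d}$ preserves the length of every row.)

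To conclude: since any row move preserves the length of each individual column, so does any sequence $\mathbf{I}$ of row moves; in particular the multiset of column lengths of $\mathbf{I}D$ coincides with that of $D$. As $D^{\max}$ depends only on this multiset, being the conjugate of its decreasing rearrangement, we get $(\mathbf{I}D)^{\max} = D^{\max}$. Symmetrically, a sequence $\mathbf{J}$ of column moves preserves all row lengths, hence the partition obtained by sorting the row lengths of $D$, so $(\mathbf{J}D)^{\min} = D^{\min}$. I do not expect any genuine obstacle here; the only point requiring care is that the argument goes through the preceding identification of $D^{\max}$ and $D^{\min}$ with the column- and row-length data of $D$, rather than through their definitions as specific composites of moves, after which everything reduces to the elementary inclusion--exclusion bookkeeping above.
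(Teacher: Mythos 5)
Your proof is correct and follows exactly the route the paper intends: the paper states the lemma as an immediate consequence of the description of $D^{\max}$ as the conjugate of the sorted column lengths (and $D^{\min}$ as the sorted row lengths), the point being that a row move only shifts cells within their columns and hence preserves every column length. Your inclusion--exclusion verification of that column-count invariance is just a careful spelling-out of the step the paper leaves implicit.
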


The partitions $D^{\min}$ and $D^{\max}$ play a special role in the structure of $S^D$. In the case $D = D(w)$, the next lemma is Theorem~4.1 from \cite{stanleysymm}.

\begin{lem} \label{lem:minmaxpartitions} Let $D$ be any diagram and $\lambda$ a partition. If $S^{\lambda} \hookrightarrow S^D$, then $D^{\min} \leq \lambda \leq D^{\max}$ in dominance order. Also, $S^{D^{\min}}$ and $S^{D^{\max}}$ appear in $S^D$ with multiplicity one. \end{lem}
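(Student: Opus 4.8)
The plan is to realize $S^D$ at once as a submodule of a Young permutation module (built from the rows of $D$) and as a quotient of an induced sign representation (built from the columns), and then to read off both the dominance bounds and the two multiplicity statements from the known Schur expansions of those modules. Write $n = |D|$, fix a filling $T$ of $D$, and factor the Young symmetrizer \eqref{eq:young.sym} as $y_T = b_C\,a_R$, where $a_R = \sum_{p\in\rowgrp(T)}p$ and $b_C = \sum_{q\in\colgrp(T)}\sgn(q)\,q$ (this is \eqref{eq:young.sym} with the $\colgrp(T)$-sum carried out first). Since a filling is a bijection $D\to\{1,\dots,n\}$, both $\rowgrp(T)$ and $\colgrp(T)$ are Young subgroups of $S_n$: $\rowgrp(T)$ is conjugate to $S_\mu$ with $\mu$ the partition obtained by sorting the row lengths of $D$, so $\mu = D^{\min}$; and $\colgrp(T)$ is conjugate to $S_\nu$ with $\nu$ the partition sorting the column lengths, so $\nu^t = D^{\max}$. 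Hence $\C[S_n]\,a_R \cong \Ind_{S_\mu}^{S_n}\mathbf 1 =: M^\mu$, with Frobenius characteristic $h_\mu = \sum_\lambda K_{\lambda\mu}s_\lambda$, and $\C[S_n]\,b_C \cong \Ind_{S_\nu}^{S_n}\sgn$, with Frobenius characteristic $e_\nu = \sum_\lambda K_{\lambda^t\nu}s_\lambda$. From $y_T = b_C a_R$ I get the inclusion $S^D = \C[S_n]\,y_T \subseteq \C[S_n]\,a_R \cong M^\mu$, while right multiplication by $a_R$ is an $S_n$-module surjection $\C[S_n]\,b_C \twoheadrightarrow \C[S_n]\,b_C\,a_R = S^D$.

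With these two facts the lemma follows quickly. Over $\C$, a submodule --- and, by semisimplicity, a quotient --- of a module $A$ contains each irreducible with multiplicity at most its multiplicity in $A$. So if $S^\lambda\hookrightarrow S^D$, then $S^\lambda\hookrightarrow M^\mu$, forcing $K_{\lambda\mu}\neq0$ and hence $\lambda\geq D^{\min}$ in dominance order; and $S^\lambda$ is a quotient of $\C[S_n]\,b_C$, forcing $K_{\lambda^t\nu}\neq0$, hence $\lambda^t\geq\nu$, i.e. $\lambda\leq\nu^t = D^{\max}$. For the multiplicities, $S^{D^{\min}} = S^\mu$ occurs in $S^D$ with multiplicity at most its multiplicity $K_{\mu\mu}=1$ in $M^\mu$, and $S^{D^{\max}} = S^{\nu^t}$ occurs in $S^D$ with multiplicity at most its multiplicity $K_{\nu\nu}=1$ in $\C[S_n]\,b_C$; since $S^{D^{\min}}$ and $S^{D^{\max}}$ both embed in $S^D$ by Remark~\ref{rem:JPinclusion} (the inclusions recorded just before Lemma~\ref{lem:canonicaldiagrams}), both multiplicities are exactly $1$.

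I do not expect a genuine obstacle: this is a soft argument, and its external inputs are all classical --- the Schur expansions of $h_\mu$ and $e_\nu$, the fact that $K_{\alpha\beta}\neq0$ forces $\alpha\geq\beta$ in dominance order (equivalently, Young's rule for $M^\mu$ and its sign-twisted analogue), and the identification of $\rowgrp(T)$ and $\colgrp(T)$ with Young subgroups. The points needing care are only bookkeeping: keeping track of the conjugate ($\nu^t = D^{\max}$, since $D^{\max}$ is the conjugate of the sorted column lengths), getting the directions of the inclusion and the surjection right, and using semisimplicity so that a quotient of $\C[S_n]\,b_C$ is a direct summand and multiplicities can only decrease.
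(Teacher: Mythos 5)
Your argument is correct, and it is genuinely different from the one in the paper. You realize $S^D$ simultaneously as a submodule of $M^{\mu}=\Ind_{S_\mu}^{S_n}\mathbf 1$ (via $y_T=b_Ca_R\in\C[S_n]a_R$, with $\mu=D^{\min}$ the sorted row lengths) and as a quotient of $\Ind_{S_\nu}^{S_n}\sgn$ (via right multiplication by $a_R$, with $\nu$ the sorted column lengths and $\nu^t=D^{\max}$), and then quote Young's rule, the fact that $K_{\alpha\beta}\neq 0$ forces $\alpha\geq\beta$, and $K_{\mu\mu}=K_{\nu\nu}=1$; the existence of both constituents comes from the inclusions $S^{D^{\min}},S^{D^{\max}}\hookrightarrow S^D$ recorded before Lemma~\ref{lem:canonicaldiagrams}, exactly as in the paper. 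The paper instead argues by induction on the number of nonempty rows: it peels off a shortest row $H$, embeds $S^D$ into $S^{(D\setminus H)\cdot H}$ by James-Peel moves, and uses Pieri's rule both for the dominance inequality and for the multiplicity-one claim (Pieri being multiplicity free). Your route is the classical one (essentially how Stanley's Theorem~4.1, cited in the paper for $D=D(w)$, is proved) and is shorter and non-inductive; the paper's route has the advantage of staying entirely inside the James-Peel/Pieri machinery developed in Section~\ref{sec:JPmoves}, but both are complete proofs. The only points worth double-checking in your write-up are bookkeeping ones you already flagged: that conjugation reverses dominance (so $\lambda^t\geq\nu$ is equivalent to $\lambda\leq\nu^t$), and that over $\C$ semisimplicity lets you bound multiplicities in a quotient as well as in a submodule; both are fine.
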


\begin{proof} We will prove the part of the statement referring to $D^{\min}$, with the proof for $D^{\max}$ being analogous. Induct on $\ell(D^{\min})$, the number of non-empty rows of $D$. The lemma is obvious when $D$ is a single row. Let $H$ be a row of $D$ with minimal length, and set $E = D \setminus H$. Then $D$ is the image of $E \cdot H$ under James-Peel moves---push $H$ to its original row, then each cell individually to its original column---so $S^D \hookrightarrow S^{E \cdot H}$.

Suppose $S^{\mu} \hookrightarrow S^D \hookrightarrow S^{E \cdot
H}$. Since $s_{E \cdot H} = s_E s_H$, Pieri's rule says $\mu$ is
obtained from some $\lambda$ with $S^{\lambda} \hookrightarrow S^E$
by adding $|H|$ cells to it, no two in the same column. By
construction, $D^{\min}$ is obtained by appending a row of length $|H|$
to $E^{\min}$. The induction hypothesis gives $E^{\min} \leq
\lambda$. To show $D^{\min} \leq \mu$, let $\alpha,\beta$ be the
partitions equivalent to $D^{\min}$ and $E^{\min}$.  Observe that for
all $1\leq m \leq \ell(E^{\min})$,
\begin{equation}\label{eq:dom}
\sum_{i=1}^m \alpha_i = \sum_{i=1}^m \beta_i \leq \sum_{i=1}^m
\lambda_i \leq \sum_{i=1}^m \mu_i,
\end{equation}
while if $m = \ell(E^{\min})+1 = \ell(D^{\min})$, both sides of \eqref{eq:dom} are equal
to $|D|$, since $\ell(\mu) \leq \ell(\lambda)+1 \leq
\ell(E^{\min})+1$.

By induction, $S^{E^{\min}}$ appears with multiplicity one in
$S^E$. Since Pieri's rule is multiplicity free, $S^{D^{\min}}$ appears
with multiplicity one in $S^{E \cdot H}$, hence with multiplicity at
most one in $S^D$. Furthermore, $S^{D^{\min}}$ does actually appear in
$S^D$, because $D^{\min}$ is the image of $D$ under James-Peel moves,
and so it appears with multiplicity one.
\end{proof}

James-Peel moves and Corollary~\ref{cor:subdiagramstaircase} present one possible way to decompose a Specht module into irreducibles. In general it is not known if an arbitrary Specht module can be decomposed by finding some appropriate tree of James-Peel moves, as the inclusion in Corollary~\ref{cor:subdiagramstaircase} may not be an isomorphism. 
The way we prove Theorem~\ref{thm:patternthm} is to find such a decomposition for the case of $D(w)$. The usefulness of James-Peel moves for us comes from the fact that they are well-behaved with respect to subdiagram inclusion, and pattern inclusion for permutations corresponds to subdiagram inclusion on the level of permutation diagrams.

To be more precise about this, we make the following definition.

\begin{defn}  \label{defn:JPtree} A \emph{James-Peel tree} for a diagram $D$ is a rooted tree $\mathcal{T}$ with vertices labeled by diagrams and edges labeled by sequences of James-Peel moves, satisfying the following conditions:
\begin{itemize}
\item The root of $\mathcal{T}$ is $D$.
\item If $B$ is a child of $A$ with a sequence $\mathbf{JP}$ of James-Peel moves labeling the edge $A\text{---}B$, then $B = \mathbf{JP}(A)$.
\item If $A$ has more than one child, these children arise as a result of applying Corollary~\ref{cor:subdiagramstaircase} to $A$. That is, $A$ contains $\delta_p \cdot (1)$ as a subdiagram in rows $i_1 < \cdots < i_p$ and columns $j_1 < \cdots < j_p$, and each edge leading down from $A$ is labeled $R_{i_p\to i_{p-k+1}}C_{j_p\to j_k}$ for some distinct values $1 \leq k \leq p$ (perhaps not all such $k$ appear).
\end{itemize}
\end{defn}

Note that the vertex labels are completely determined by the root and the edge labels. When a vertex is labeled by a permutation diagram $D(w)$, sometimes we will refer to it simply as $w$. See Examples~\ref{ex:subdiagrampieri} and \ref{ex:JPtree} later in this section for examples of James-Peel trees.

Corollary~\ref{cor:subdiagramstaircase} and Remark~\ref{rem:JPinclusion} immediately imply the following lemma.  

\begin{lem}\label{lem:JPtree}
If $D$ has a James-Peel tree $\mathcal{T}$ with leaves $A_1, \ldots, A_m$, then $\bigoplus_i S^{A_i} \hookrightarrow S^D$ 
as $S_{|D|}$-modules over $\mathbb{C}$.
\end{lem}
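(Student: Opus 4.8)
The assertion that this lemma follows ``immediately'' from Corollary~\ref{cor:subdiagramstaircase} and Remark~\ref{rem:JPinclusion} is essentially a bookkeeping statement, so the natural proof is a short induction on the number of vertices of the James--Peel tree $\mathcal{T}$. First I would record the two basic building blocks. If an edge $A\text{---}B$ is labeled by a single James--Peel move, then $S^B \hookrightarrow S^A$ as $S_{|D|}$-modules over $\C$: this is exactly Remark~\ref{rem:JPinclusion}, using the surjection $S^A \twoheadrightarrow S^{R_{a\to b}A}$ (dualized over $\C$) for a row move and the containment $S^{C_{c\to d}A}\subseteq S^A$ for a column move. Composing, an edge labeled by an arbitrary sequence $\mathbf{JP}$ of James--Peel moves gives $S^{\mathbf{JP}(A)} \hookrightarrow S^A$. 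If instead $A$ has more than one child, then by Definition~\ref{defn:JPtree} the children $B_1,\dots,B_r$ arise from a subdiagram $\delta_p\cdot(1)$ of $A$ in rows $i_1<\cdots<i_p$ and columns $j_1<\cdots<j_p$, with each edge labeled $R_{i_p\to i_{p-k+1}}C_{j_p\to j_k}$ for distinct values $k$ in some subset $K\subseteq\{1,\dots,p\}$. Reordering rows and columns to put the subdiagram into rows and columns $1,\dots,p$ replaces $A$ by an equivalent diagram (hence an isomorphic Specht module) and carries these edge labels to $R_{p\to p-k+1}C_{p\to k}$, so Corollary~\ref{cor:subdiagramstaircase} applies; discarding from the direct sum $\bigoplus_{j=1}^p S^{R_{p\to p-j+1}C_{p\to j}A}$ the summands with $j\notin K$ yields an $S_{|D|}$-module injection $\bigoplus_{i=1}^r S^{B_i}\hookrightarrow S^A$.

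Now for the induction itself. If $\mathcal{T}$ has a single vertex, then $D$ is its only leaf and the inclusion $S^D\hookrightarrow S^D$ is trivial. Otherwise let $B_1,\dots,B_r$ be the children of the root $D$, and let $\mathcal{T}_i$ be the subtree rooted at $B_i$, which is itself a James--Peel tree for $B_i$. The leaf set of $\mathcal{T}$ is the disjoint union of the leaf sets of the $\mathcal{T}_i$, so $\{A_1,\dots,A_m\}=\bigsqcup_{i=1}^r\{\,A_j : A_j \text{ a leaf of } \mathcal{T}_i\,\}$. By the inductive hypothesis applied to each $\mathcal{T}_i$ we have $\bigoplus_{A_j\in\mathcal{T}_i} S^{A_j}\hookrightarrow S^{B_i}$. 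Taking the direct sum of these injections gives
\begin{equation*}
\bigoplus_{j=1}^m S^{A_j} \;=\; \bigoplus_{i=1}^r\Bigl(\bigoplus_{A_j\in\mathcal{T}_i} S^{A_j}\Bigr) \;\hookrightarrow\; \bigoplus_{i=1}^r S^{B_i}.
\end{equation*}
Finally I would compose with the root-level inclusion $\bigoplus_{i=1}^r S^{B_i}\hookrightarrow S^D$: when $r=1$ this is the building block for a single edge (a sequence of moves), and when $r>1$ it is the restricted Corollary~\ref{cor:subdiagramstaircase} inclusion described above. This completes the induction.

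There is no real obstacle here; the entire mathematical content sits in Theorem~\ref{thm:subdiagramstaircase} and its corollary, and the only points requiring a line of care are (i) that a direct sum of $S_{|D|}$-module injections is again an injection, so the inclusions from the subtrees can be combined, and (ii) that the reordering of rows and columns needed to invoke Corollary~\ref{cor:subdiagramstaircase} in its stated form is harmless because it replaces a diagram by an equivalent one. Both are immediate from the definitions in Section~\ref{sec:background}.
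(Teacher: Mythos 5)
Your proof is correct and is exactly the argument the paper has in mind: the paper simply declares the lemma an immediate consequence of Corollary~\ref{cor:subdiagramstaircase} and Remark~\ref{rem:JPinclusion}, and your induction on the tree, with the two building blocks (composed single-move inclusions for one-child edges, the restricted Corollary~\ref{cor:subdiagramstaircase} injection for branching vertices), is the standard way to make that precise. No gaps.
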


\begin{defn}
A James-Peel tree $\mathcal{T}$ for $D$ is \emph{complete} if its leaves $A_1, \ldots, A_m$ are equivalent to Ferrers diagrams of partitions and $S^D \simeq \bigoplus_i S^{A_i}$.
\end{defn}

In \cite{jamespeel}, an algorithm is given which constructs a complete James-Peel tree when $D$ is a skew shape. More generally, Reiner and Shimozono \cite{columnconvex} construct a complete James-Peel tree for any \emph{column-convex} diagram: a diagram $D$ for which $(a,x), (b,x) \in D$ with $a < b$ implies $(i,x) \in D$ for all $a < i < b$. In the next section we construct a complete James-Peel tree for the diagram of a permutation, so it's worth noting that neither of these classes of diagrams contains the other. For example, $D(37154826)$ is not equivalent to any column-convex or row-convex diagram, while the column-convex diagram
\begin{equation*}
\begin{array}{cccc}
\circ & \circ & \cdot & \cdot\\
\circ & \cdot & \circ & \cdot\\
\circ & \cdot & \cdot & \circ
\end{array}
\end{equation*}
is not equivalent to the diagram of any permutation. The James-Peel
trees constructed in \cite{jamespeel} and \cite{columnconvex} are
binary trees based on moves from Theorem~\ref{thm:JP}.  By
Corollary~\ref{cor:subdiagramstaircase}, the James-Peel trees
constructed here do not need to be binary, a vertex can have an
arbitrary number of children.

\begin{rem} \label{rem:spechtseries}
Theorem~\ref{thm:subdiagramstaircase} shows that a complete
James-Peel tree for $D$ yields a Specht series for $S^D$ over any
field.  In particular, Theorem~\ref{thm:permutationJPtree} below shows
that $S^{D(w)}$ always has a Specht series.
\end{rem}

\begin{defn} \label{inducedJPtree}
Given a James-Peel tree $\mathcal{T}'$ for a subdiagram $D'$ of $D$, the \emph{induced} James-Peel tree $\mathcal{T}$ for $D$ is defined as follows. Start with $\mathcal{T}$ an unlabeled tree isomorphic to $\mathcal{T}'$, with $\phi : \mathcal{T}' \to \mathcal{T}$ an isomorphism. Give each edge $\phi(A_1) \text{---} \phi(A_2)$ of $\mathcal{T}$ the same label as the edge $A_1 \text{---} A_2$ of $\mathcal{T}'$. Label the root $\phi(D')$ of $\mathcal{T}$ with $D$, and label the rest of the vertices according to the James-Peel moves labeling the edges in $\mathcal{T}$. 
\end{defn} 

Observe that the first two conditions of Definition~\ref{defn:JPtree}
clearly hold for $\mathcal{T}$ as constructed.  The subdiagram of $D'$
needed in the third condition for $\mathcal{T}'$ and $D'$ works just
as well for $\mathcal{T}$ and $D$, when viewed as a subdiagram of $D$.
Thus, $\mathcal{T}$ is a James-Peel tree for $D$.

The notion of an induced James-Peel tree provides a convenient way to discuss a generalization of Theorem~\ref{thm:subdiagramstaircase} from the case of a subdiagram $\delta_p \cdot (1)$ to that of any subdiagram $\lambda \cdot (k)$ with $\lambda$ a partition. Recall the classical \emph{Pieri rule}:
\begin{equation*}
s_{\lambda} s_{(k)} = \sum_{\mu} s_{\mu},
\end{equation*}
where $\mu$ runs over all partitions gotten by adding $k$ cells to $\lambda$, no two in the same column. That is, let $\hstrips_k(\lambda)$ be the set of length $\ell(\lambda)+1$ compositions $\alpha$ of $k$ such that $\alpha_i \leq \lambda_i - \lambda_{i-1}$ for $i > 1$. Then $s_{\lambda} s_{(k)} = \sum_{\alpha \in \hstrips_k(\lambda)} s_{\lambda + \alpha}$, where $\lambda + \alpha$ is entrywise addition.

The moves in Theorem~\ref{thm:subdiagramstaircase} can be thought of as realizing Pieri's rule on $\delta_p \cdot (1)$ in terms of James-Peel moves. Suppose we have a James-Peel tree $\mathcal{T}$ for $\lambda \cdot (k)$ whose leaves are the partitions $\lambda + \alpha$ for $\alpha \in \hstrips_k(\lambda)$. If $D$ contains $\lambda \cdot (k)$ as a subdiagram, we can take the James-Peel tree for $D$ induced by $\mathcal{T}$, which amounts to realizing Pieri's rule on the subdiagram $\lambda \cdot (k)$ using James-Peel moves, generalizing Theorem~\ref{thm:subdiagramstaircase}.

In fact we only need the case $\lambda = \delta_p$, $k = 1$, so rather than state and prove a precise theorem, we will be content with giving an example of such a tree.

\begin{ex} \label{ex:subdiagrampieri}
Take $\lambda = (3,1,1)$, $k = 2$. In each non-leaf vertex, we have shaded the cells to which Theorem~\ref{thm:subdiagramstaircase} is being applied.

\begin{center}
\begin{tikzpicture}[level 1/.style={sibling distance=13em}, level 2/.style={sibling distance=6.5em}]
\node { $\begin{array}{ccccc}
\bullet & \bullet & \circ & \cdot & \cdot\\
\bullet & \cdot & \cdot & \cdot & \cdot\\
\circ & \cdot & \cdot & \cdot & \cdot\\
\cdot & \cdot & \cdot & \circ & \bullet
\end{array}$ }
  child[level distance=6em] {
   node { $\begin{array}{cccc}
\circ & \circ & \bullet & \cdot\\
\circ & \cdot & \cdot & \cdot\\
\circ & \cdot & \cdot & \cdot\\
\circ & \cdot & \cdot & \bullet
\end{array}$ } 
   child[level distance=6.5em] {
    node { $\begin{array}{ccc}
\circ & \circ & \circ\\
\circ & \cdot & \cdot\\
\circ & \cdot & \cdot\\
\circ & \cdot & \circ
\end{array}$ } edge from parent node[left] {$\scriptstyle C_{4\to 3}$}
   }
   child[level distance=6.5em] {
    node { $\begin{array}{cccc}
\circ & \circ & \circ & \circ\\
\circ & \cdot & \cdot & \cdot\\
\circ & \cdot & \cdot & \cdot\\
\circ & \cdot & \cdot & \cdot
\end{array}$ } edge from parent node[right] {$\scriptstyle R_{4\to 1}$}
   } edge from parent node[above] {$\scriptstyle C_{5\to 1}\quad$}
  }  
  child[level distance=6.25em] {
   node { $\begin{array}{cccc}
\circ & \circ & \bullet & \cdot\\
\circ & \circ & \cdot & \bullet\\
\circ & \cdot & \cdot & \cdot
\end{array}$ }
   child {
    node { $\begin{array}{ccc}
\circ & \circ & \circ\\
\circ & \circ & \circ\\
\circ & \cdot & \cdot
\end{array}$ } edge from parent node[left] {$\scriptstyle C_{4\to 3}$}
   }
   child {
    node { $\begin{array}{cccc}
\circ & \circ & \circ & \circ\\
\circ & \circ & \cdot & \cdot\\
\circ & \cdot & \cdot & \cdot
\end{array}$ } edge from parent node[right] {$\scriptstyle R_{2\to 1}$}
   } edge from parent node[left] {$\scriptstyle R_{4\to 2}C_{5\to 2}$}
  }
  child[level distance=6em] {
   node { $\begin{array}{ccccc}
\circ & \circ & \circ & \circ & \circ\\
\circ & \cdot & \cdot & \cdot & \cdot\\
\circ & \cdot & \cdot & \cdot & \cdot
\end{array}$  } edge from parent node[above] {$\quad \scriptstyle R_{4\to 1}$}
  } 
;
\end{tikzpicture}
\end{center}
\end{ex}

The main example of induced James-Peel trees for us will come from permutation patterns. The connection is that if $w$ contains a pattern $v$, then $D(v)$ is (up to reindexing) a subdiagram of $D(w)$. Specifically, if the pattern $v$ appears in positions $i_1, \ldots, i_k$ of $w$, then $D(v)$ is the subdiagram of $D(w)$ induced by the rows $i_1, \ldots, i_k$ and columns $w(i_1), \ldots, w(i_k)$.

Let $\shapes(D)$ denote the multiset of partitions of $n = |D|$ such
that $S^D \simeq \bigoplus_{\lambda \in \shapes(D)}
S^{\lambda}$. Given partitions $\lambda, \mu$, let $\lambda + \mu$ be
the partition $(\lambda_1 + \mu_1, \lambda_2 + \mu_2, \ldots)$,
padding $\lambda$ or $\mu$ with $0$'s as necessary. Let $\lambda \cup
\mu$ be the partition whose parts are the (multiset) union of the
parts of $\lambda$ and those of $\mu$. 

\begin{lem} \label{lem:JPtreetwosubdiagrams}
Suppose $D_1, D_2$ are subdiagrams of $D$, each with a complete James-Peel tree, and such that $D_1 = (U \times V) \cap D$ and $D_2 = (U^c \times V^c) \cap D$. Let $F_1 = (U^c \times V) \cap D$ and $F_2 = (U \times V^c) \cap D$. Then there is a well-defined injection $\iota : \shapes(D_1) \times \shapes(D_2) \to \shapes(D)$ given by
\begin{equation*}
\iota(\lambda, \mu) = (\lambda \cup F_1^{\min}) + (F_2^{\max} \cup \mu).
\end{equation*}
\end{lem}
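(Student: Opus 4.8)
The plan is to assemble a James--Peel tree for $D$ from complete James--Peel trees of the two subdiagrams $D_1$ and $D_2$, with ``normalization'' segments inserted along single-child edges, so that its leaves are indexed by $\shapes(D_1)\times\shapes(D_2)$ and the leaf $E_{\lambda,\mu}$ attached to $(\lambda,\mu)$ has the following shape: the restriction of $E_{\lambda,\mu}$ to the columns $V$ is a left-justified diagram whose row-length multiset is the parts of $\lambda\cup F_1^{\min}$, and its restriction to the columns $V^{c}$ is a left-justified diagram whose row-length multiset is the parts of $F_2^{\max}\cup\mu$. Granting this, one computes $E_{\lambda,\mu}^{\max}=\iota(\lambda,\mu)$: a left-justified diagram has column-length multiset equal to the conjugate of its sorted row lengths, so the column-length multiset of $E_{\lambda,\mu}$ is the union of the parts of $(\lambda\cup F_1^{\min})^{t}$ and of $(F_2^{\max}\cup\mu)^{t}$, whence
\[
E_{\lambda,\mu}^{\max}=\big((\lambda\cup F_1^{\min})^{t}\cup(F_2^{\max}\cup\mu)^{t}\big)^{t}=(\lambda\cup F_1^{\min})+(F_2^{\max}\cup\mu),
\]
using $(\alpha\cup\beta)^{t}=\alpha^{t}+\beta^{t}$. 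Then Lemma~\ref{lem:JPtree} gives $\bigoplus_{(\lambda,\mu)}S^{E_{\lambda,\mu}}\hookrightarrow S^{D}$ over $\shapes(D_1)\times\shapes(D_2)$, and since $S^{E_{\lambda,\mu}^{\max}}\hookrightarrow S^{E_{\lambda,\mu}}$ by Lemma~\ref{lem:minmaxpartitions} (or Remark~\ref{rem:JPinclusion}), we obtain $\bigoplus_{(\lambda,\mu)}S^{\iota(\lambda,\mu)}\hookrightarrow S^{D}$, which is exactly the assertion that $\iota$ embeds the multiset $\shapes(D_1)\times\shapes(D_2)$ into $\shapes(D)$.

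To build the tree, first adjoin empty rows and columns so that $U,U^{c},V,V^{c}$ are all infinite, and choose complete James--Peel trees $\mathcal{T}_1$ for $D_1$ using only rows in $U$ and columns in $V$, and $\mathcal{T}_2$ for $D_2$ using only rows in $U^{c}$ and columns in $V^{c}$. Run $\mathcal{T}_1$ on $D$. Its moves leave the block $(U^{c}\times V^{c})\cap D=D_2$ untouched, carry $D_1$ to the various partitions $\lambda\in\shapes(D_1)$, and --- because column moves never change per-row cell counts and row moves never change per-column cell counts --- leave $F_1$ with row-length multiset that of $F_1$ and leave $F_2$ with column-length multiset that of $F_2$. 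On each resulting branch insert, along a single-child edge, an upward compression of all rows in $U$ followed by a leftward compression of all columns in $V$. The first makes the $D_1$- and $F_2$-blocks top-justified; the second then makes the $D_1$-block the genuine Ferrers diagram of $\lambda$ and the $F_1$-block left-justified (with the same row lengths); and, since the $F_2$-block is now top-justified with column-length multiset that of $F_2$, its row-length multiset is exactly the parts of $F_2^{\max}$. Both normalizations act trivially on any block already in Ferrers form, and neither touches $D_2$.

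Now graft $\mathcal{T}_2$ onto each branch, induced on the current diagram via the subdiagram $D_2$ (Definition~\ref{inducedJPtree}). Its column moves lie in $V^{c}$ and its row moves in $U^{c}$, so they do not touch the $D_1$-block; a column move applied to a top-justified block keeps it top-justified and merely permutes its column lengths (so preserves its row-length multiset), and a row move applied to a left-justified block keeps it left-justified and merely permutes its row lengths; hence $\mathcal{T}_2$ carries $D_2$ to the partitions $\mu\in\shapes(D_2)$, keeps $F_2$ top-justified with row-length multiset the parts of $F_2^{\max}$, and keeps $F_1$ left-justified with row-length multiset that of $F_1$. Finally insert, on each branch, an upward compression of all rows in $U^{c}$ followed by a leftward compression of all columns in $V^{c}$: these turn $D_2$ into the Ferrers diagram of $\mu$ and $F_2$ into the Ferrers diagram of $F_2^{\max}$, and, since an upward compression bubble-sorts the rows of a left-justified block, leave $F_1$ equal to the Ferrers diagram of $F_1^{\min}$, all without disturbing the $D_1$-block. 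The leaf is the diagram $E_{\lambda,\mu}$ described above. The whole construction is a James--Peel tree for $D$: its only branchings are those coming from $\mathcal{T}_1$ and $\mathcal{T}_2$ via Corollary~\ref{cor:subdiagramstaircase}, and because $\mathcal{T}_1$ and $\mathcal{T}_2$ are complete its leaves are indexed by $\shapes(D_1)\times\shapes(D_2)$.

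The main obstacle is the bookkeeping in the two middle paragraphs: one must verify that every James--Peel move occurring in $\mathcal{T}_1$ or $\mathcal{T}_2$ really does only the claimed benign thing to the two ``crossing'' blocks $F_1$ and $F_2$ it passes through, and in particular that the inserted upward and leftward compressions are the identity on whichever blocks are already Ferrers diagrams at that stage. The relevant elementary facts are: column moves preserve per-row cell counts (and dually); a leftward column move is the identity on a left-justified block and an upward row move is the identity on a top-justified block; a leftward column move on a top-justified block, and a row move on a left-justified pair of rows, merely transpose two column (resp. row) lengths. A secondary point is that inserting the normalization segments does not spoil the subdiagrams on which the later branchings of $\mathcal{T}_2$ depend, which holds because those segments only involve rows $U$ and columns $V$ while $D_2$ sits in rows $U^{c}$ and columns $V^{c}$ and is left alone until $\mathcal{T}_2$ runs.
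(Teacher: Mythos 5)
Your proposal is correct and follows essentially the same route as the paper's proof: induce $\mathcal{T}_1$ on $D$, insert single-child normalization edges, graft the induced $\mathcal{T}_2$, normalize again so each leaf has the block form $\begin{smallmatrix}\lambda & F_2^{\max}\\ F_1^{\min} & \mu\end{smallmatrix}$, and read off $\iota(\lambda,\mu)$ as the $\max$-partition of that leaf via Lemma~\ref{lem:minmaxpartitions} and Lemma~\ref{lem:JPtree}. The only differences are cosmetic (you defer sorting $F_1$ to the final stage and spell out the conjugate identity behind $\widetilde{C}^{\max}=(\lambda\cup F_1^{\min})+(F_2^{\max}\cup\mu)$, which the paper asserts directly).
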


\begin{proof}
Without loss of generality we can assume $U=\{1,2,\ldots,i\}$ and
$V=\{1,2,\ldots, j \}$ by permuting rows and columns of $D$ if
necessary.  Let $\mathcal{T}_1, \mathcal{T}_2$ be complete James-Peel
trees for $D_1, D_2$.  Then, we can further assume that the leaves of
$\mathcal{T}_1, \mathcal{T}_2$ are all partition diagrams by doing
extra James-Peel moves to sort the rows and columns.

Let $\mathcal{T}$  be the James-Peel tree for $D$ induced from 
$\mathcal{T}_1$, with $\phi : \mathcal{T}_1 \to \mathcal{T}$ an
isomorphism as in Definition~\ref{inducedJPtree}. Since $D$ contains
$D_1 = \phi^{-1}(D)$ as a subdiagram, each vertex $A$ of $\mathcal{T}$
contains $\phi^{-1}(A)$ as a subdiagram. In particular, each leaf $B$
of $\mathcal{T}$ has the block form
\begin{equation}\label{eq:block.form}
B=\begin{tabular}{|c|c|}
\hline
$\lambda$ & $F_2'$\\
\hline
$F_1'$     & $D_2$\\
\hline
\end{tabular}\ ,
\end{equation}
where  $\lambda$ is
the shape of $\phi^{-1}(B)$,\ $F_1'$ is the image of $F_1$ under moves $C_{c\to d}$, and
$F_2'$ the image of $F_2$ under moves $R_{a \to b}$.  

Using the block form \eqref{eq:block.form}, we next add a single child
to each leaf $B$ of $\mathcal{T}$.  By Lemma
\ref{lem:canonicaldiagrams}, $(F_1')^{\min} = (F_1)^{\min}$ and
$(F_2')^{\max} = (F_2)^{\max}$. Thus, there is a sequence
$\mathbf{I}_{B}$ of upward row moves involving only rows in $U$, and a
sequence $\mathbf{J}_{B}$ of leftward column moves involving only
columns in $V$, such that $\mathbf{J}_{B}(F_1') = F_1^{\min}$ and
$\mathbf{I}_{B}(F_2') = F_2^{\max}$.  Since the upper-left block is
a partition diagram, it is unaffected by the James-Peel moves
$\mathbf{I}_{B}$ and $\mathbf{J}_{B}$.  Since no cell of $D_2$ lies in
a row in $U$ or a column in $V$, $\mathbf{J}_{B}$ and $\mathbf{I}_{B}$
do not change $D_2$ either. Thus, we can  define 
\begin{equation*} \widetilde{B} = \mathbf{I}_{B}\mathbf{J}_{B} (B) =
\begin{tabular}{|c|c|}
\hline
$\lambda$ & $F_2^{\max}$\\
\hline
$F_1^{\min}$     & $D_2$\\
\hline
\end{tabular}\ .
\end{equation*}
To each leaf $B$ of $\mathcal{T}$, attach the child $\widetilde{B}$
via an edge labeled $\mathbf{I}_{B}\mathbf{J}_{B}$.  Note, the result
is still a James-Peel tree for $D$.  We will abuse notation and again
call this tree $\mathcal{T}$.

We modify $\mathcal{T}$ one more time by augmenting each leaf with an
induced tree for $D_{2}$.  Specifically, to each leaf $\widetilde{B}$
of $\mathcal{T}$, attach the James-Peel tree for $\widetilde{B}$
induced by $\mathcal{T}_2$.   As above, each leaf $C$ of the new tree descending
from $\widetilde{B}$ now has block form
\begin{equation}\label{eq:block.form.2}
C=\begin{tabular}{|c|c|}
\hline
$\lambda$ & $F_{2}''$\\
\hline
$F_{1}''$     & $\mu$\\
\hline
\end{tabular}\ ,
\end{equation}
where $\lambda,\mu$ are a pair of shapes in $\shapes(D_1) \times
\shapes(D_2)$,\ $F_{1}''$ is the result of applying row moves to
$F_1^{\min}$ and $F_{2}''$ is the result of applying column moves to
$F_2^{\max}$. 
 Notice that the upper-right and lower-left block of
$\widetilde{B}$ and $C$ are equivalent, since both are equivalent
to partitions.  The upper-left block of $\widetilde{B}$ and $C$ are
exactly the same since the induced tree for $\widetilde{B}$ does not touch the
first $i$ rows and $j$ columns.  Again, we abuse notation by calling this
tree $\mathcal{T}$.

Finally, we modify $\mathcal{T}$ once again so the leaves all have
block form with 4 partition shapes.  Assume $C$ is a descendant of $\widetilde{B}$
with block diagonal shapes $\lambda,\mu$ as in \eqref{eq:block.form.2}
above.  Let $\mathbf{I}_{C}$ be the sequence of upward James-Peel row
moves needed to sort the rows of $F_1''$ into the partition shape
$F_1^{\min}$, and let $\mathbf{J}_{C}$ be the sequence of leftward
column moves needed to sort the columns of $F_2''$ into the partition
shape $F_2^{\max}$.  Note, such moves will not change the shapes
$\lambda$ and $\mu$ when applied to $C$ since they are partitions.
Thus, one can define
\begin{equation}\label{eq:block.form.3}
\widetilde{C}:=\mathbf{I}_{C}\mathbf{J}_{C}(C) = \begin{tabular}{|c|c|}
\hline
$\lambda$ & $F_2^{\max}$\\
\hline
$F_1^{\min}$     & $\mu$\\
\hline
\end{tabular}\ ,
\end{equation}
with all four subdiagrams equal to honest left- and top-justified Ferrers diagrams.
For each leaf $C$ of $\mathcal{T}$, attach
$\widetilde{C}=\mathbf{I}_{C}\mathbf{J}_{C}(C)$ as a child.  

Observe that the resulting tree $\mathcal{T}$ is a James-Peel tree for
$D$, and the leaves are in bijection with the multiset $\shapes
(D_{1}) \times \shapes (D_{2})$.  One can also see that if a leaf
$\widetilde{C}$ of $\mathcal{T}$ has diagonal shapes $\lambda ,\mu $
then
\begin{equation*}
\widetilde{C}^{\max} = (\lambda \cup F_1^{\min}) + (F_2^{\max} \cup \mu)
\end{equation*}
and the shape of $\widetilde{C}^{\max}$ only depends on $\lambda ,\mu,
F_1, F_2$ and not on $B$ or $C$.  Thus, we define $ \iota(\lambda,
\mu)$ to be the partition of shape $\widetilde{C}^{\max}$ which is in
$\shapes(D)$ by Lemma~\ref{lem:minmaxpartitions} and
Lemma~\ref{lem:JPtree}.  This gives a well-defined injection of
multisets $\iota : \shapes(D_1) \times \shapes(D_2) \to \shapes(D)$ as
intended.
\end{proof}

For the most part we will only need a simpler version of this lemma.

\begin{cor} \label{cor:JPtreesubdiagram} Suppose $D$ has a subdiagram $D'$ with a complete James-Peel tree. There is an injection $\iota : \shapes(D') \hookrightarrow \shapes(D)$ such that $\lambda \subseteq \iota(\lambda)$. Moreover, $\iota(\lambda)$ depends only on $\lambda$: if $\lambda$ appears $k$ times in $\shapes(D')$, then $\iota(\lambda)$ appears at least $k$ times in $\shapes(D)$. \end{cor}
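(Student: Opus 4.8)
The plan is to re-run the construction in the proof of Lemma~\ref{lem:JPtreetwosubdiagrams}, but using only the complete James-Peel tree on $D'$; we no longer have, nor need, a complete tree on the complementary subdiagram. Permuting rows and columns of $D$, assume $D' = (U\times V)\cap D$ with $U=\{1,\ldots,i\}$ and $V=\{1,\ldots,j\}$, and set $F_1 = (U^c\times V)\cap D$, $F_2 = (U\times V^c)\cap D$, $D_2 = (U^c\times V^c)\cap D$, where now $D_2$ is just some diagram. Fix a complete James-Peel tree $\mathcal{T}_1$ for $D'$, which — after appending extra sorting moves if needed — we may assume has partition-diagram leaves, so that its multiset of leaves is $\shapes(D')$. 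Let $\mathcal{T}$ be the induced James-Peel tree for $D$, with $\phi\colon\mathcal{T}_1\to\mathcal{T}$ the defining isomorphism from Definition~\ref{inducedJPtree}.

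Exactly as in the proof of Lemma~\ref{lem:JPtreetwosubdiagrams}, every leaf $B$ of $\mathcal{T}$ then has the block form \eqref{eq:block.form}, where the top-left block is the Ferrers diagram of the shape $\lambda$ of $\phi^{-1}(B)$ (occupying rows $U$ and columns $V$), $F_1'$ is an image of $F_1$ under leftward column moves inside $V$, and $F_2'$ an image of $F_2$ under upward row moves inside $U$. I would then extend $\mathcal{T}$ in two more stages. First, attach to each leaf $B$ the single child $\widetilde B$, the diagram obtained from \eqref{eq:block.form} by replacing $F_1'$ with $F_1^{\min}$ and $F_2'$ with $F_2^{\max}$; this is the same diagram $\widetilde B$ appearing in the proof of Lemma~\ref{lem:JPtreetwosubdiagrams} and is reached from $B$ by the James-Peel moves described there, which by Lemma~\ref{lem:canonicaldiagrams} suffice and which neither disturb the top-left Ferrers block nor touch $D_2$. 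Second, attach to each $\widetilde B$ the single child $\widetilde B^{\max}$ via the row moves defining $(\,\cdot\,)^{\max}$. The result is again a James-Peel tree for $D$, which I continue to call $\mathcal{T}$; all of its leaves are now Ferrers diagrams.

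Now I would set $\iota(\lambda) = \widetilde B^{\max}$ for any leaf $B$ of the original induced tree with $\phi^{-1}(B)$ of shape $\lambda$. This is well defined and depends only on $\lambda$, since every block of $\widetilde B$ is determined by $\lambda$ together with the fixed diagrams $F_1^{\min}$, $F_2^{\max}$, $D_2$. By Lemma~\ref{lem:JPtree} applied to $\mathcal{T}$ we get $\bigoplus_C S^C\hookrightarrow S^D$ over the leaves $C$ of $\mathcal{T}$, and each such $C$ is a Ferrers diagram, so $\iota(\lambda)\in\shapes(D)$; moreover if $\lambda$ occurs $k$ times in $\shapes(D')$ then $\mathcal{T}_1$ has $k$ leaves of shape $\lambda$, hence $\mathcal{T}$ has $k$ leaves equal to $\iota(\lambda)$, so $\iota(\lambda)$ occurs at least $k$ times in $\shapes(D)$. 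For the containment $\lambda\subseteq\iota(\lambda)$: the Ferrers diagram of $\lambda$ sits in the top-left block of $\widetilde B$, so for $1\le k\le j$ the $k$th column of $\widetilde B$ has exactly $\lambda^t_k + (F_1^{\min})^t_k\ge\lambda^t_k$ cells, and since $\lambda^t$ is weakly decreasing this forces the $k$th largest column length of $\widetilde B$ to be at least $\lambda^t_k$; thus $\iota(\lambda)^t\supseteq\lambda^t$, i.e.\ $\iota(\lambda)\supseteq\lambda$. Finally, for injectivity of $\iota$: the multiset of column lengths of $\widetilde B$ splits as the union of $\{\lambda^t_k + (F_1^{\min})^t_k : 1\le k\le j\}$, coming from the columns in $V$, together with a multiset coming from the columns in $V^c$ that depends only on $F_2^{\max}$ and $D_2$; since $\iota(\lambda)$ determines the whole multiset it determines the first part, whose decreasing rearrangement is $\lambda^t + (F_1^{\min})^t$ entrywise, and subtracting the fixed $(F_1^{\min})^t$ recovers $\lambda^t$ and hence $\lambda$.

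The point worth emphasizing is that this corollary is \emph{not} an immediate special case of Lemma~\ref{lem:JPtreetwosubdiagrams}: that lemma requires complete James-Peel trees on two complementary subdiagrams $D_1 = (U\times V)\cap D$ and $D_2 = (U^c\times V^c)\cap D$, whereas here we are handed a complete tree only on $D' = D_1$ and have no control over $D_2$. The remedy is to halt that lemma's construction one stage early — at the diagrams $\widetilde B$, whose definition only uses $\mathcal{T}_1$ — and then collapse each $\widetilde B$ to the partition $\widetilde B^{\max}$. The only genuinely computational ingredient is the column-length bookkeeping of the previous paragraph, needed both for $\lambda\subseteq\iota(\lambda)$ and for injectivity, and that is the one place I expect to have to be careful.
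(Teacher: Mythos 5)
Your proof is correct and matches the paper's intended argument: the corollary is presented there as a ``simpler version'' of Lemma~\ref{lem:JPtreetwosubdiagrams}, meaning exactly the truncation you describe --- take the tree induced by $\mathcal{T}_1$, normalize each leaf to $\widetilde{B}$, and pass to $\widetilde{B}^{\max}$. Your observation that the corollary is not a literal special case of the lemma's \emph{statement} (since no complete tree on the complementary block is available), together with the explicit column-length bookkeeping establishing $\lambda\subseteq\iota(\lambda)$ and the injectivity of $\iota$, correctly supplies details the paper leaves implicit.
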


In particular, taking $D = D(w)$ and $D' = D(v)$ for $v$ a pattern in $w$, Corollary~\ref{cor:JPtreesubdiagram} together with the equalities
\begin{equation*}
s_{D(w)} = F_{w} = \sum_{P \in \mathcal{EG}(w)} s_{\shape(P)^{t}}
\end{equation*}
will immediately imply Theorem~\ref{thm:patternthm} once we show that $D(w)$ always has a complete James-Peel tree.

\begin{ex} \label{ex:JPtree}
Take the diagram $D = D(316524)$.  Rows $5,6$ and column $6$ are
empty, so we won't draw them in the following picture
\begin{equation*}
D = \begin{array}{cccc|c}
\circ & \circ & \cdot & \cdot & \cdot\\
\cdot & \cdot & \cdot & \cdot & \cdot\\
\cdot & \circ & \cdot & \circ & \circ\\
\hline
\cdot & \circ & \cdot & \circ & \cdot
\end{array}.
\end{equation*}
Let $D_{1}$ be the subdiagram on rows 1,2,3 and columns 1,2,3,4 which
corresponds to the pattern $31524 = \fl(31624)$, so $D_1 \simeq
D(31524)$.

A complete James-Peel tree $\mathcal{T}_{1}$ for $D_1$ is
\begin{center}
\begin{tikzpicture}
\node {$D_1$}
 child { node {$A_1$}
         edge from parent
         node[left] {$\scriptstyle R_{3\to 1}$}
        }
 child { node {$A_2$}
         edge from parent
         node[right] {$\scriptstyle C_{4\to 1}$}
        }
;
\end{tikzpicture}
\end{center}
where $A_1 \simeq (3,1), A_2 \simeq (2,2)$.
The James-Peel tree $\mathcal{T}_{1}$ for $D_1$ induces the following James-Peel tree $\mathcal{T}$ 
for $D$ 

\begin{center}
\begin{tikzpicture}
\node {$D$}
 child { node {$B_{31}$}
         edge from parent
         node[left] {$\scriptstyle R_{3\to 1}$}
        }
 child { node {$B_{22}$}
         edge from parent
         node[right] {$\scriptstyle C_{4\to 1}$}
        }
;
\end{tikzpicture}
\end{center}
where $B_{31} = R_{3\to 1}D$ and $B_{22} = C_{4\to 1}D$ with 

\begin{equation*}
B_{31} = \begin{array}{cccc|c}
\circ & \circ & \cdot & \circ & \circ\\
\cdot & \cdot & \cdot & \cdot & \cdot\\
\cdot & \circ & \cdot & \cdot & \cdot\\
\hline
\cdot & \circ & \cdot & \circ & \cdot
\end{array} \qquad
B_{22} = \begin{array}{cccc|c}
\circ & \circ & \cdot & \cdot & \cdot\\
\cdot & \cdot & \cdot & \cdot & \cdot\\
\circ & \circ & \cdot & \cdot & \circ\\
\hline
\circ & \circ & \cdot & \cdot & \cdot
\end{array}
\end{equation*}

Following the proof of Lemma~\ref{lem:JPtreetwosubdiagrams}, we next apply leftward column moves to the lower left subdiagrams $F_1'$ to get $(F_1')^{\min}$, and upward row moves to the upper right subdiagrams $(F_2')$ to get $(F_2')^{\max}$:

\begin{equation*}
\widetilde{B}_{31} = C_{4\to 1}B_{31} = \begin{array}{cccc|c}
\circ & \circ & \cdot & \circ & \circ\\
\cdot & \cdot & \cdot & \cdot & \cdot\\
\cdot & \circ & \cdot & \cdot & \cdot\\
\hline
\circ & \circ & \cdot & \cdot & \cdot
\end{array} \qquad
\widetilde{B}_{22} = R_{3\to 1}B_{22} = \begin{array}{cccc|c}
\circ & \circ & \cdot & \cdot & \circ\\
\cdot & \cdot & \cdot & \cdot & \cdot\\
\circ & \circ & \cdot & \cdot & \cdot\\
\hline
\circ & \circ & \cdot & \cdot & \cdot
\end{array}\ .
\end{equation*}

At this point we would apply James-Peel moves to the lower right subdiagram, but it is empty so there's nothing to do. Finally, we apply leftward column moves and rightward row moves to make all four subdiagrams into Ferrers diagrams (up to trailing empty rows and columns):
\begin{equation*}
\widetilde{C}_{31,\emptyset} = R_{3\to 2}C_{4\to 3}C_{2\to 1}\widetilde{B}_{31} = \begin{array}{cccc|c}
\circ & \circ & \circ & \cdot & \circ\\
\circ & \cdot & \cdot & \cdot & \cdot\\
\cdot & \cdot & \cdot & \cdot & \cdot\\
\hline
\circ & \circ & \cdot & \cdot & \cdot
\end{array} \qquad
\widetilde{C}_{22,\emptyset} = R_{3\to 2}\widetilde{B}_{22} = \begin{array}{cccc|c}
\circ & \circ & \cdot & \cdot & \circ\\
\circ & \circ & \cdot & \cdot & \cdot\\
\cdot & \cdot & \cdot & \cdot & \cdot\\
\hline
\circ & \circ & \cdot & \cdot & \cdot
\end{array}\ .
\end{equation*}
Now, taking unions before additions as the order of operations
\begin{align*}
&(\widetilde{C}_{31,\emptyset})^{\max} = (3,1) \cup (2) + (1) \cup \emptyset = (3,2,1) + (1) = (4,2,1),\\
&(\widetilde{C}_{22,\emptyset})^{\max} = (2,2) \cup (2) + (1) \cup \emptyset = (2,2,2) + (1) = (3,2,2).
\end{align*}
For the injection $\iota : \shapes(D_1) \hookrightarrow \shapes(D)$ we therefore take $\iota(3,1) = (4,2,1)$ and $\iota(2,2) = (3,2,2)$. Indeed, $s_{D_1} = F_{31524} = s_{32/1} = s_{31} + s_{22}$, while $s_D = F_{316524} = s_{322} + s_{331} + s_{421}$.

\end{ex}

\bigskip

\section{Transitions as James-Peel moves}
\label{sec:transitions}

Recall the following notation from Section~\ref{sec:background}. Given a permutation $w$, take $r$ maximal with $w(r) > w(r+1)$, then $s > r$ maximal with $w(s) < w(r)$. The set of transitions of $w$ is
\begin{equation} \label{eq:transitions}
T(w) = \{wt_{rs}t_{rj} : \ell(wt_{rs}t_{rj}) = \ell(w)\},
\end{equation}
or else $T(1\times w)$ if the set on the right is empty. Note that $wt_{rs}t_{rj} \in T(w)$ if and only if $w(j) < w(s)$ and there is no $j < j' < r$ with $w(j) < w(j') < w(s)$.

Upon taking diagrams of permutations, each transition corresponds to a sequence of James-Peel moves.

\begin{lem} \label{lem:JPtransitions} Given a permutation $w$, let $r, s$ be as above and take $w' = wt_{rs}t_{rj} \in T(w)$. Then
\begin{equation*}
D(w') = R_{r \to j} C_{w(s) \to w(j)} D(w) = C_{w(s) \to w(j)} R_{r \to j} D(w).
\end{equation*}
\end{lem}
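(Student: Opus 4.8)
The goal is to show that the James-Peel moves $R_{r \to j}$ and $C_{w(s) \to w(j)}$ applied to $D(w)$ produce $D(w')$, where $w' = w t_{rs} t_{rj}$, and that the two moves commute. The natural approach is to work directly from the definition $D(w) = \{(i, w(k)) : i < k, w(i) > w(k)\}$ and track carefully what happens to cells under the two-step transposition $w \mapsto w t_{rs} \mapsto w t_{rs} t_{rj} = w'$.

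First I would record the relevant structural facts about $r, s, j$. Since $r$ is maximal with $w(r) > w(r+1)$, the values $w(r+1), w(r+2), \ldots, w(n)$ that are less than $w(r)$ are the cells of $D(w)$ in row $r$; in fact row $r$ of $D(w)$ consists of exactly the columns $w(k)$ for $k > r$ with $w(k) < w(r)$, and column $w(r)$ contains no cell in any row $> r$ (by maximality of $r$). Since $s$ is maximal with $w(s) < w(r)$, position $s$ is the last such descent partner, and the condition $w' \in T(w)$ means $w(j) < w(s)$ with no $j < j' < r$ satisfying $w(j) < w(j') < w(s)$ — this "no intervening value" condition is what makes the length stay equal. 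I would then compute $w'$ explicitly: $w'$ agrees with $w$ except $w'(j) = w(s)$, $w'(r) = w(j)$, $w'(s) = w(r)$ (one checks $w t_{rs} t_{rj}$ unwinds to exactly this three-cycle on positions $j, r, s$).

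The heart of the argument is a cell-by-cell comparison of $D(w)$ and $D(w')$. I would partition $\N \times \N$ into the rows and columns that can possibly be affected — rows $j$, $r$, $s$ and columns $w(j)$, $w(s)$, $w(r)$ — versus everything else, and argue that outside this grid the diagrams agree, since inversions not involving positions $j, r, s$ or the moved values are unchanged. Inside the grid one checks a finite list of cases: e.g. a cell $(r, w(k))$ with $k > r$, $w(k) < w(r)$ in $D(w)$ either survives in row $r$ of $D(w')$ or gets pushed. The key computation is that the cell $(r, w(s))$ in $D(w)$ (present since $w(r) > w(s)$, $r < s$) moves to position $(j, w(s))$ in $D(w')$ — this is the $R_{r \to j}$ move — while the "no intervening value" condition guarantees position $(j, w(s))$ is empty in $D(w)$ so the move is legal and nothing collides. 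Simultaneously, the cells in column $w(s)$ above row $r$ shift to column $w(j)$, which is the $C_{w(s)\to w(j)}$ move, again with emptiness of the target guaranteed by the transition condition. I expect the main obstacle to be the bookkeeping: making sure every one of the finitely many cell positions in the $3 \times 3$ grid of relevant rows and columns is accounted for, and in particular verifying the emptiness hypotheses that legitimize the moves $R_{r\to j}$ and $C_{w(s)\to w(j)}$ (these follow from maximality of $r$, maximality of $s$, and the transition condition respectively).

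Finally, the commutation $R_{r\to j} C_{w(s)\to w(j)} D(w) = C_{w(s)\to w(j)} R_{r\to j} D(w)$ I would deduce either as a free byproduct — since both sides were shown equal to $D(w')$ — or, more cleanly and in the spirit of Lemma~\ref{lem:commutingJPmoves}, by noting that the commutation depends only on the $2\times 2$ subdiagram of $D(w)$ in rows $j, r$ and columns $w(j), w(s)$; the transition conditions force this subdiagram to have a cell only at $(r, w(s))$ (the target cells $(j, w(j))$, $(j, w(s))$, $(r, w(j))$ are all empty — the first and third because no cell lies left of or below an $\times$, the second by the transition condition), and for such a subdiagram the two orders of moves manifestly agree. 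This route has the advantage of isolating exactly why the transition hypotheses are needed, so I would present the diagram comparison to establish one of the two equalities and then invoke this $2\times 2$ observation for the commutation.
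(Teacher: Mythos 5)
Your overall strategy is the paper's: compute $w'$ explicitly as the $3$-cycle on positions $j,r,s$, compare $D(w)$ and $D(w')$ cell by cell (noting that everything outside rows $j,r,s$ and columns $w(j),w(s),w(r)$ is untouched), check that the changes are realized by $R_{r\to j}$ and $C_{w(s)\to w(j)}$, and read off the commutation from the $2\times 2$ subdiagram in rows $j,r$ and columns $w(j),w(s)$ (which is exactly how the paper handles commutation in the proof of Theorem~\ref{thm:subdiagramstaircase}).

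There is, however, one substantive omission in your plan. You propose to verify only the ``emptiness hypotheses that legitimize the moves,'' i.e.\ that the targets of the cells which are supposed to move are empty. The equally essential complementary check is that the cells which are \emph{not} supposed to move have \emph{occupied} targets, so that $R_{r\to j}$ and $C_{w(s)\to w(j)}$ leave them in place. Concretely: a cell $(k,w(s))\in D(w)$ with $k<j$ sits at the same position in $D(w')$ (so your sentence ``the cells in column $w(s)$ above row $r$ shift to column $w(j)$'' is false for such $k$; only rows strictly between $j$ and $r$ shift), and one must verify $(k,w(j))\in D(w)$ to prevent $C_{w(s)\to w(j)}$ from displacing it; likewise a cell $(r,w(k))\in D(w)$ with $w(k)<w(j)$ survives in row $r$ of $D(w')$, and one must verify $(j,w(k))\in D(w)$. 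These are conditions (i) and (ii) in the paper's proof (they follow from the choice of $r,s$ and the northwest property of $D(w)$), and without them the claimed equality of diagrams does not follow from the cell-by-cell comparison. Relatedly, the case analysis cannot be confined to ``the finitely many cell positions in the $3\times 3$ grid'' of the nine intersections: cells are gained in row $j$ and lost in row $r$ in \emph{every} column $w(k)$ with $w(j)<w(k)<w(s)$ and $k>r$, and gained/lost in columns $w(j)$, $w(s)$ in every row strictly between $j$ and $r$, so the comparison must cover all of $[j,r]\times[w(j),w(s)]$ together with the full rows and columns acted on by the two moves.
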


\begin{proof}
We will show that the change in passing from $D(w)$ to $D(w')$ is as follows:
\begin{equation*}
\begin{array}{ccccc}
\,& w(j)    &                                                           & w(s)                                      & w(r)\\
j & \times &                     \cdots                                & \cdot                                    & \cdot\\
  & \vdots &                                                           & \colorbox{dark-gray}{\phantom{$\vdots$}} &     \\
r & \cdot  & \!\!\!\!\colorbox{light-gray}{\phantom{$\cdots$}}\!\!\!\! & \circ               & \times\\
  &        &                                                           &                     &\\
s & \cdot  &                                                           & \times              & \cdot
\end{array} \quad \xrightarrow{\hspace*{.5cm}} \quad
\begin{array}{ccccc}
\,& w(j)                                      &                                                           & w(s)    & w(r)\\
j & \circ                                    & \!\!\!\!\colorbox{light-gray}{\phantom{$\cdots$}}\!\!\!\! & \times & \cdot\\
  & \colorbox{dark-gray}{\phantom{$\vdots$}} &                                                           & \vdots  &      \\
r & \times                                   &           \cdots                                          & \cdot  & \cdot\\
  &                                           &                                                           &        &\\
s & \cdot                                    &                                                           & \cdot  & \times
\end{array}
\end{equation*}
where we move the cells in each shaded region of $D(w)$ into the corresponding (formerly cell-free) shaded region of $D(w')$, and also move the cell $(r, w(s))$, denoted by $\circ$ above, to $(j, w(j))$. Here only the region $[j,r] \times [w(j), w(s)]$ together with row $s$ and column $w(r)$ have been drawn.  We will show the rest of the diagram remains unchanged. We use $\cdots$ and $\vdots$ to denote a sequence of empty cells of arbitrary length.

Consider row-by-row the effect on diagrams of passing from $w$ to $w'$. It is clear that rows $k < j$ of $D(w')$ match those of $D(w)$. Rows $k > s$ also match: indeed, they are all empty.

In row $j$, by passing from $D(w)$ to $D(w')$ we could only gain cells. Specifically, a cell is gained in column $w(k)$ if and only if the following equivalent conditions hold:
\begin{itemize}
\item $w(j) < w(k) < w(s)$ and $k > j$, or $w(k) = w(j)$
\item $w(j) < w(k) < w(s)$ and $k > r$, or $w(k) = w(j)$
\item $(r, w(k)) \in D(w)$ and $w(j) < w(k)$, or $w(k) = w(j)$.
\end{itemize}

On the other hand, in row $r$, we could only lose cells. A cell is lost in column $w(k)$ if and only if following equivalent conditions hold:
\begin{itemize}
\item $w(j) < w(k) < w(r)$ and $k > r$
\item $w(j) < w(k) < w(s)$ and $k > r$, or $w(k) = w(s)$
\item $(r, w(k)) \in D(w)$ and $w(j) < w(k)$, or $w(k) = w(s)$.
\end{itemize}
Thus, the effect of passing from $w$ to $w'$ on rows $j$ and $r$ is to move all cells in row $r$ between columns $w(j)$ and $w(s)$ up to row $j$, and to move $(r,w(s))$ to $(j, w(j))$.

Now say $j < k < r$. The only column in which a cell could be gained in row $k$ is column $w(j)$, which happens if and only if the following equivalent conditions hold:
\begin{itemize}
\item $w(k) > w(j)$
\item $w(k) > w(s)$
\item $(k, w(s)) \in D(w)$.
\end{itemize}
Conversely, if there is a cell in row $k$ and column $w(s)$ of $D(w)$, it is lost in $D(w')$.

So at least within the region $[j,r] \times [w(j),w(s)]$, one does obtain $D(w')$ from $D(w)$ by performing the indicated James-Peel moves. To show that in fact $D(w') = R_{r \to j} C_{w(s) \to w(j)} D(w)$, we must show that these James-Peel moves do not move any cells outside of $[j,r] \times [w(j),w(s)]$. That is:
\begin{enumerate}[(i)]
\item If $(k, w(s)) \in D(w)$ for $k < j$ or $k > r$, then $(k, w(j)) \in D(w)$.
\item If $(r, w(k)) \in D(w)$ for $w(k) < w(j)$ or $w(k) > w(s)$, then $(j, w(k)) \in D(w)$.
\end{enumerate}
For (i), rows $k > r$ are empty, so assume $k < j$ and $(k, w(s)) \in D(w)$. Then $w(k) > w(s) > w(j)$ and $k < j$ give $(k, w(j)) \in D(w)$. For (ii), $(r, w(s))$ is the rightmost cell in row $r$ by the choice of $r, s$, so assume $w(k) < w(j)$ and $(r, w(k)) \in D(w)$. Then $(j, w(k)) \in D(w)$, because $j < r < k$ and $w(k) < w(j)$.
\end{proof}

\begin{thm} \label{thm:permutationJPtree}
For a permutation $w$, the diagram $D(w)$ has a complete James-Peel tree.
\end{thm}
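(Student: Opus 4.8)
The plan is to induct on the height of the Lascoux--Sch\"utzenberger tree of $w$, constructing a complete James--Peel tree for $D(w)$ that parallels the L--S tree. By the remark following Example~\ref{ex:LStree} we may assume the replacement of a vertex $v$ by $1\times v$ never occurs, so that every non-vexillary vertex $v$ of the L--S tree has $T(v)\neq\emptyset$; this is harmless since $D(1\times v)\simeq D(v)$, and equivalent diagrams have complete James--Peel trees simultaneously (relabel rows and columns). The base case is $w$ vexillary: then $D(w)$ is equivalent to a partition and the one-vertex tree is complete.

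Assume now $w$ is not vexillary. Let $r$ be its last descent, $s>r$ maximal with $w(s)<w(r)$, and $j_1<\cdots<j_m$ the valid transition positions, so $T(w)=\{v_i:=wt_{rs}t_{rj_i}:1\le i\le m\}$. One checks $w(j_1)>\cdots>w(j_m)$, all less than $w(s)$: if $j_a<j_b$ are valid, the second defining condition for $j_a$, with intermediate index $j_b$, forces $w(j_a)>w(j_b)$. If $m=1$, give $D(w)$ the single child $D(v_1)$ along an edge labelled $R_{r\to j_1}C_{w(s)\to w(j_1)}$ --- correct by Lemma~\ref{lem:JPtransitions} --- and graft onto it a complete James--Peel tree for $D(v_1)$, which exists by induction; since $F_w=F_{v_1}$ and Lemma~\ref{lem:JPtree} embeds the direct sum of the leaf Specht modules into $S^{D(w)}$, comparing dimensions shows the resulting tree is complete.

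The substantive case is $m\ge 2$. Put $R=\{j_1,\ldots,j_{m-1},r\}$ and $C=\{w(j_m)<\cdots<w(j_2)<w(s)\}$, with increasing enumerations $i_1<\cdots<i_m$ and $c_1<\cdots<c_m$; thus $i_a=j_a$ and $c_a=w(j_{m-a+1})$ for $a<m$, while $i_m=r$ and $c_m=w(s)$. Checking the membership of each $(i_a,c_b)$ in $D(w)$ --- directly from the definition of $D(w)$ and the fact that $w$ decreases on $j_1,\ldots,j_m$ --- shows that $D(w)\cap(R\times C)$ is precisely $\delta_m\cdot(1)$, with the $(1)$ at $(r,w(s))$: for $b<m$ we have $(r,w(j_{m-b+1}))\notin D(w)$ because $j_{m-b+1}<r$; for $a<m$ we have $(j_a,w(s))\notin D(w)$ because $w(j_a)<w(s)$; $(r,w(s))\in D(w)$; and for $a,b<m$ the cell $(j_a,w(j_{m-b+1}))$ lies in $D(w)$ exactly when $j_a<j_{m-b+1}$ and $w(j_a)>w(j_{m-b+1})$, that is, exactly when $a+b\le m$. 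So Corollary~\ref{cor:subdiagramstaircase} applies in the rows $R$ and columns $C$, producing children $A_k=R_{r\to i_{m-k+1}}C_{w(s)\to c_k}D(w)$ for $1\le k\le m$, with edge labels as required by Definition~\ref{defn:JPtree}. Up to equivalence these children are exactly the diagrams of the transitions of $w$: for $2\le k\le m-1$ one has $i_{m-k+1}=j_{m-k+1}$ and $c_k=w(j_{m-k+1})$, so $A_k=R_{r\to j_{m-k+1}}C_{w(s)\to w(j_{m-k+1})}D(w)=D(v_{j_{m-k+1}})$ exactly by Lemma~\ref{lem:JPtransitions}; for $k=1$ the row move is trivial, $A_1=C_{w(s)\to w(j_m)}D(w)$, and one computes that the columns occupied in row $j_m$ of $A_1$ form a subset of $\{w(r+1),\ldots,w(s-1)\}$, which is in turn contained in the set of columns occupied in row $r$ of $A_1$, so $R_{r\to j_m}$ merely interchanges rows $r$ and $j_m$ and $A_1\simeq R_{r\to j_m}A_1=D(v_{j_m})$; the case $k=m$ is dual, giving $A_m=R_{r\to j_1}D(w)\simeq C_{w(s)\to w(j_1)}A_m=D(v_{j_1})$.

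By induction each transition diagram $D(v)$, $v\in T(w)$, has a complete James--Peel tree; transporting it across the equivalence $A_k\simeq D(v)$ and grafting the result onto $A_k$ for each $k$, we get a James--Peel tree $\mathcal{T}$ for $D(w)$ whose leaves are all equivalent to partitions. By Lemma~\ref{lem:JPtree} the direct sum of the leaf Specht modules injects into $S^{D(w)}$, and the sum of their Frobenius characteristics is $\sum_k s_{A_k}=\sum_{v\in T(w)}F_v=F_w=s_{D(w)}$ by the transition recurrence; comparing dimensions, the injection is an isomorphism, so $\mathcal{T}$ is complete and the induction closes. The one place demanding genuine care is the combinatorial verification that $D(w)\cap(R\times C)$ equals $\delta_m\cdot(1)$ and that the two extremal James--Peel moves collapse to row and column interchanges; everything else is formal, given Lemma~\ref{lem:JPtransitions}, Corollary~\ref{cor:subdiagramstaircase}, and the transition recurrence.
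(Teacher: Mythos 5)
Your proof is correct and follows essentially the same route as the paper: identify the $\delta_m\cdot(1)$ subdiagram supported on the transition rows and columns, apply Corollary~\ref{cor:subdiagramstaircase}, match the resulting children with the transition diagrams via Lemma~\ref{lem:JPtransitions}, recurse down the (finite) Lascoux--Sch\"utzenberger tree, and deduce completeness from the transition recurrence. The only cosmetic difference is at the two extremal children: the paper appends one extra edge to each so that they become \emph{equal} to $D(v^1)$ and $D(v^p)$, whereas you argue they are \emph{equivalent} to those diagrams because the omitted move is a row (resp.\ column) interchange --- which is exactly the content of the paper's later Lemma~\ref{lem:equivalent}, and your maximality argument for it is sound.
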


\begin{proof}
If $w$ is vexillary, the tree with one vertex $D(w)$ and no edges is a
complete James-Peel tree for $D(w)$. Otherwise, let $v^1, \ldots, v^p$
be the transitions of $w$, say $v^i = wt_{rs}t_{rj_i}$ where $s > r > j_1 >
\cdots > j_p$. Then $w(j_1) < \cdots < w(j_p) < w(s) < w(r)$,
so $\fl(w(j_p)\cdots w(j_1)w(r)w(s)) = p\cdots 1(p+2)(p+1)$, and  $D(p\cdots
1(p+2)(p+1))$ is exactly $(p-1, \ldots, 1) \cdot (1)$ after removing an empty row and column. Thus, $D(w)$
contains $(p-1, \ldots, 1) \cdot 1$ as a subdiagram in rows $j_p,
\ldots, j_{2}, r$ and columns $w(j_1), \ldots, w(j_{p-1}), w(s)$.

Let
\begin{equation*}
D_i = \begin{cases}
R_{r \to j_i}C_{w(s) \to w(j_i)}D(w) & \text{if $1 < i < p$}\\
C_{w(s) \to w(j_1)}D(w) & \text{if $i = 1$}\\
R_{r \to j_p}D(w) & \text{if $i = p$}
\end{cases}
\end{equation*}
The diagrams $D_i$ are exactly those produced by Corollary~\ref{cor:subdiagramstaircase}, and $\bigoplus_{i=1}^p S^{D_i} \hookrightarrow S^{D(w)}$. 

Let $\mathcal{T}$ be the James-Peel tree with root $D(w)$ and children
$D_i$, where $D(w)$ is connected to $D_i$ by an edge labeled with the
appropriate James-Peel move(s).  Next, connect $D_1$ to a child $E_1 =
R_{r \to j_1}D_1$ by an edge labeled $R_{r \to j_1}$, and $D_p$ to a
child $E_p = C_{w(s) \to w(j_p)}D_p$ by an edge labeled $C_{w(s) \to
w(j_p)}$. The leaves $E_1, D_2, \ldots, D_{p-1}, E_p$ of $\mathcal{T}$
are now exactly $D(v^1), \ldots, D(v^p)$ by
Lemma~\ref{lem:JPtransitions}. Attach to each $D(v^i)$ the tree
inductively produced for $v^i$ by transitions and so on until every
leaf is the diagram of a vexillary permutation.

The tree $\mathcal{T}$ is still a James-Peel tree for $D(w)$. By
construction, its leaves are the diagrams of the leaves of the
Lascoux-Sch\"utzenberger tree of $w$. The equation $s_{D(w)} = \sum_v
s_{\shape(v)}$ from Section~\ref{sec:background}, with $v$ running
over the leaves of the L-S tree, implies that $\mathcal{T}$ is
complete.
\end{proof}

Let $JP(w)$ be the James-Peel tree for $D(w)$ constructed in Theorem~\ref{thm:permutationJPtree}.   
Corollary~\ref{cor:JPtreesubdiagram} and Theorem~\ref{thm:permutationJPtree} now yield our first main result.

\newtheorem*{thm:patternthmagain}{Theorem \ref{thm:patternthm}}

\begin{thm:patternthmagain}
Let $v, w$ be permutations with $w$ containing $v$ as a pattern. There is an injection $\iota : \mathcal{EG}(v) \hookrightarrow \mathcal{EG}(w)$ such that if $P \in \mathcal{EG}(v)$, then $\shape(P) \subseteq \shape(\iota(P))$. Moreover, if $P, P'$ have the same shape, so do $\iota(P), \iota(P')$.
\end{thm:patternthmagain}

\begin{cor} \label{cor:kvexrespectspatterns} If a permutation $w$ is $k$-vexillary and $v$ is a pattern in $w$, then $v$ is $k$-vexillary. \end{cor}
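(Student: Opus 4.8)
The plan is to deduce Corollary~\ref{cor:kvexrespectspatterns} directly from Theorem~\ref{thm:patternthm}. Recall that by definition $w$ is $k$-vexillary when $EG(w) = |\mathcal{EG}(w)| \leq k$, and that pattern containment is the hypothesis we are handed: $w$ contains $v$. So suppose $w$ is $k$-vexillary and $v$ is a pattern in $w$.

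First I would invoke Theorem~\ref{thm:patternthm}, which produces an injection $\iota : \mathcal{EG}(v) \hookrightarrow \mathcal{EG}(w)$. An injection of finite sets immediately gives $|\mathcal{EG}(v)| \leq |\mathcal{EG}(w)|$, that is, $EG(v) \leq EG(w)$. Combining this with $EG(w) \leq k$ yields $EG(v) \leq k$, so $v$ is $k$-vexillary. (The finiteness of $\mathcal{EG}(w)$ is clear since $F_w$ has finitely many terms, or equivalently since the Lascoux-Sch\"utzenberger tree is finite.)

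There is essentially no obstacle here: the corollary is a one-line consequence of Theorem~\ref{thm:patternthm} once that theorem is in hand. The only thing worth remarking is that we do not even need the refinements in the statement of Theorem~\ref{thm:patternthm} about shapes being nested or equal-shape tableaux mapping to equal-shape tableaux; the bare existence of an injection $\mathcal{EG}(v) \hookrightarrow \mathcal{EG}(w)$ suffices. If anything, the "main obstacle" was front-loaded into establishing Theorem~\ref{thm:patternthm} itself — namely showing via Theorem~\ref{thm:permutationJPtree} that $D(w)$ admits a complete James-Peel tree and that pattern containment corresponds to subdiagram inclusion, so that Corollary~\ref{cor:JPtreesubdiagram} applies.
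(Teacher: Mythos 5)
Your proof is correct and is exactly the paper's intended argument: the corollary is stated as an immediate consequence of Theorem~\ref{thm:patternthm}, since the injection $\mathcal{EG}(v) \hookrightarrow \mathcal{EG}(w)$ gives $EG(v) \leq EG(w) \leq k$. No further comment is needed.
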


\begin{cor} \label{cor:kvexrespectsmultiplicity} If $v$ is a pattern in $w$ and $F_w$ is multiplicity-free, so is $F_v$. More generally, if $\langle F_w, s_{\lambda} \rangle \leq k$ for all $\lambda$ then $\langle F_v, s_{\mu} \rangle \leq k$ for all $\mu$. \end{cor}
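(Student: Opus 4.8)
The plan is to read both assertions off directly from Theorem~\ref{thm:patternthm} together with the Edelman--Greene expansion $F_u = \sum_{P \in \mathcal{EG}(u)} s_{\shape(P)^t}$. First I would record the bookkeeping dictionary: for any permutation $u$ and partition $\lambda$,
\[
\langle F_u, s_\lambda \rangle = \#\{P \in \mathcal{EG}(u) : \shape(P)^t = \lambda\} = \#\{P \in \mathcal{EG}(u) : \shape(P) = \lambda^t\},
\]
so that the multiplicity of $s_\lambda$ in $F_u$ simply counts the Edelman--Greene tableaux of $u$ of one fixed shape.

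Now fix $v$ a pattern in $w$ and let $\iota : \mathcal{EG}(v) \hookrightarrow \mathcal{EG}(w)$ be the injection of Theorem~\ref{thm:patternthm}. For a partition $\mu$ put $A_\mu = \{P \in \mathcal{EG}(v) : \shape(P) = \mu^t\}$, so $|A_\mu| = \langle F_v, s_\mu \rangle$. If $A_\mu = \emptyset$ there is nothing to prove, so assume otherwise. By the ``moreover'' clause of Theorem~\ref{thm:patternthm}, all the tableaux $\iota(P)$ with $P \in A_\mu$ have a common shape; call it $\nu$. Since $\iota$ is injective, $\iota(A_\mu)$ is a subset of $\{Q \in \mathcal{EG}(w) : \shape(Q) = \nu\}$ of cardinality $|A_\mu|$, so
\[
\langle F_v, s_\mu \rangle = |A_\mu| \le \#\{Q \in \mathcal{EG}(w) : \shape(Q) = \nu\} = \langle F_w, s_{\nu^t} \rangle.
\]
Hence if $\langle F_w, s_\lambda \rangle \le k$ for every $\lambda$, then $\langle F_v, s_\mu \rangle \le k$ for every $\mu$, which is the general statement; the multiplicity-free assertion is the case $k = 1$.

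There is essentially no obstacle here beyond organizing the fibers correctly: the one point requiring care is to apply the shape-preservation property of $\iota$ fiber by fiber (on each $A_\mu$), rather than globally, so that a bound on the \emph{single} coefficient $\langle F_w, s_{\nu^t} \rangle$ is enough to control the whole set $A_\mu$. It is worth noting that the containment $\shape(P) \subseteq \shape(\iota(P))$ plays no role in this corollary; only the injectivity of $\iota$ and the constancy of $\shape \circ \iota$ on each fixed-shape fiber are used.
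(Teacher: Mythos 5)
Your proof is correct and is exactly the intended argument: the paper states this as an immediate corollary of Theorem~\ref{thm:patternthm} without writing out the details, and your fiber-by-fiber use of the injection $\iota$ together with the constancy of $\shape\circ\iota$ on fixed-shape fibers is precisely how it follows. Your closing observation that the shape-containment clause is not needed here is also accurate.
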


\begin{rem}
Theorem~\ref{thm:patternthm} shows the existence of an injection $\mathcal{EG}(v) \hookrightarrow \mathcal{EG}(w)$ which respects inclusion of shapes for $v$ a pattern contained in $w$, but an explicit map on tableaux is lacking. The Edelman-Greene correspondence shows that this is equivalent to an injection $\Red(v) \hookrightarrow \Red(w)$ which is an inclusion on the shapes of Edelman-Greene insertion tableaux. Tenner's \cite{tennervexillary} characterization of vexillary permutations yields an explicit injection in the case where $v$ is vexillary. 
\end{rem}

\begin{rem}
We note that Crites, Panova and Warrington have studied the connection
between the shape of a permutation under the RSK correspondence and
pattern containment \cite{crites-panova-warrington}.  The injection
given in Theorem~\ref{thm:patternthm} on shapes is quite
different since the Edelman-Greene tableaux of a permutation are based
on the reduced words instead of the one-line notation.  At this time,
we don't know of a connection between their work and our injection.   
\end{rem}

So far we have only used Corollary~\ref{cor:JPtreesubdiagram}, but the full strength of Lemma~\ref{lem:JPtreetwosubdiagrams} yields another interesting result.

\begin{thm} Let $w \in S_n$ be a permutation and $I \subseteq [n]$. If $u_1$ is the subsequence of $w$ in positions $I$, and $u_2$ the subsequence in positions $[n] \setminus I$, then
\begin{equation*}
EG(w) \geq  EG(\fl(u_1)) \cdot EG(\fl(u_2)).
\end{equation*}
 \end{thm}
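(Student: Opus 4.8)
The plan is to deduce this inequality directly from Lemma~\ref{lem:JPtreetwosubdiagrams} by reinterpreting both sides combinatorially. First I would set up the relevant subdiagrams: after reordering so that $I = \{1, \ldots, i\}$, let $D = D(w)$, and let $U$ be the set of rows indexed by $I$ and $V = \{w(k) : k \in I\}$ the corresponding columns. Then $D_1 = (U \times V) \cap D$ is exactly (a reindexing of) the permutation diagram $D(\fl(u_1))$, and $D_2 = (U^c \times V^c) \cap D$ is exactly $D(\fl(u_2))$; this is the standard observation noted just before the statement of Lemma~\ref{lem:JPtreetwosubdiagrams} that pattern containment corresponds to subdiagram inclusion. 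By Theorem~\ref{thm:permutationJPtree}, both $D_1$ and $D_2$ have complete James-Peel trees, so the hypotheses of Lemma~\ref{lem:JPtreetwosubdiagrams} are met.

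Next I would translate the conclusion of Lemma~\ref{lem:JPtreetwosubdiagrams} into a statement about $EG$. Recall $EG(v) = |\mathcal{EG}(v)| = \sum_{\lambda} a_{v\lambda}$, and since $F_v = s_{D(v)} = \sum_{\lambda \in \shapes(D(v))} s_{\lambda^t}$ (with $\shapes$ counted as a multiset), we have $EG(v) = |\shapes(D(v))|$. So $EG(\fl(u_1)) = |\shapes(D_1)|$, $EG(\fl(u_2)) = |\shapes(D_2)|$, and $EG(w) = |\shapes(D)|$. Lemma~\ref{lem:JPtreetwosubdiagrams} gives an injection of multisets $\iota : \shapes(D_1) \times \shapes(D_2) \hookrightarrow \shapes(D)$, and an injection of multisets has $|\shapes(D_1) \times \shapes(D_2)| = |\shapes(D_1)| \cdot |\shapes(D_2)| \leq |\shapes(D)|$. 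Chaining these equalities gives exactly $EG(w) \geq EG(\fl(u_1)) \cdot EG(\fl(u_2))$.

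The main subtlety — and really the only thing to be careful about — is the multiset bookkeeping: one must check that $\iota$ is genuinely an injection of multisets (not merely a map on underlying sets), so that the product of cardinalities is bounded. This is precisely what Lemma~\ref{lem:JPtreetwosubdiagrams} asserts, since it was stated for multisets and the construction attaches one leaf of the final James-Peel tree to each element of $\shapes(D_1) \times \shapes(D_2)$ counted with multiplicity, with distinct leaves $\widetilde{C}$ mapping (via $\widetilde{C} \mapsto \widetilde{C}^{\max}$ and Lemma~\ref{lem:minmaxpartitions}) to well-defined, distinct positions of $\shapes(D)$. A secondary point worth a sentence is the identification $D_1 \simeq D(\fl(u_1))$ and $D_2 \simeq D(\fl(u_2))$: after deleting the empty rows and columns of $(U \times V) \cap D$, what remains is the Rothe diagram of the pattern occupying positions $I$, by definition of $D(w)$ restricted to those rows and the columns that the selected entries occupy. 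No new ideas are needed beyond these identifications; the theorem is a clean corollary of the machinery already developed.
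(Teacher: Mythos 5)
Your proof is correct and is exactly the argument the paper intends: the theorem is stated as a direct application of Lemma~\ref{lem:JPtreetwosubdiagrams} with $U=I$, $V=w(I)$, using Theorem~\ref{thm:permutationJPtree} to supply complete James-Peel trees for $D_1\simeq D(\fl(u_1))$ and $D_2\simeq D(\fl(u_2))$, and the identification $EG(v)=|\shapes(D(v))|$ (as multisets) to convert the multiset injection into the product inequality. The only blemish is the stray transpose in your formula $s_{D(v)}=\sum_{\lambda\in\shapes(D(v))}s_{\lambda^t}$ (the paper's convention puts the transpose on the Edelman--Greene shapes, not on $\shapes(D(v))$), which does not affect the cardinality count.
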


\bigskip

\section{$k$-vexillary permutations}
\label{sec:kvex}

In this section we show that the property of $k$-vexillarity is characterized by avoiding a finite set of patterns for any $k$. The key step is to remove some inessential moves from the James-Peel tree for $D(w)$, namely those which only permute rows or columns.

If $D$ is an arbitrary diagram, and $\sigma, \tau$ are permutations, let $(\sigma,\tau)D$ be the diagram $\{(\sigma(i), \tau(j)) : (i,j) \in D\}$. Given a James-Peel tree $\mathcal{T}$ for $D$, let $(\sigma,\tau)\mathcal{T}$ denote the James-Peel tree for $(\sigma,\tau)D$ gotten by replacing every James-Peel move $R_{x\to y}$ labeling an edge of $\mathcal{T}$ by $R_{\sigma(x) \to \sigma(y)}$, and every move $C_{x \to y}$ by $C_{\tau(x) \to \tau(y)}$, and relabeling vertices accordingly. Whenever a move labeling an edge $e$ of a James-Peel tree just permutes rows or columns, we can eliminate that move from the tree at the cost of relabeling rows and columns of James-Peel moves below $e$, as follows.

\begin{defn}
Given a James-Peel tree $\mathcal{T}$ of a diagram $D$, define the \emph{reduced James-Peel tree} $\red(\mathcal{T})$ of $D$ inductively.

\begin{itemize}
\item If $D$ has no children in $\mathcal{T}$, then $\red(\mathcal{T}) = \mathcal{T}$.
\item If $D$ has just one child $F$, and $D = (\sigma, \tau)F$ for some $\sigma, \tau \in S_{\infty}$, let $\mathcal{T}_1$ be the subtree of $\mathcal{T}$ below $F$ with root $F$. Then $\red(\mathcal{T}) = (\sigma, \tau)\red(\mathcal{T}_1)$.
\item If $D$ has at least two children $F_1, F_{2},\ldots, F_p$ or $D$ has one child
$F_{1}$ not equivalent to $D$, let
$\mathcal{T}_i$ be the subtree of $\mathcal{T}$ below $F_i$ with root
$F_i$. Then $\red(\mathcal{T})$ is $\mathcal{T}$ with each
$\mathcal{T}_i$ replaced by $\red(\mathcal{T}_i)$.
\end{itemize}
\end{defn}

\begin{defn} A rooted tree is \emph{bushy} if every non-leaf vertex has at least two children. \end{defn}

\begin{lem} If $\mathcal{T}$ is a complete James-Peel tree for $D$,
then $\red(\mathcal{T})$ is a complete James-Peel tree for $D$.
Furthermore, $\red(\mathcal{T})$ is bushy. 
\end{lem}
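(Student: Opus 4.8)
The plan is to prove the two assertions of the lemma --- that $\red(\mathcal{T})$ is a complete James-Peel tree, and that it is bushy --- simultaneously by structural induction on $\mathcal{T}$, following the three cases in the definition of $\red$. The base case, where $D$ is a leaf, is immediate: $\red(\mathcal{T}) = \mathcal{T}$ is its own leaf, trivially bushy, and completeness is inherited. For the inductive step I want to check two things at each node: first, that $\red(\mathcal{T})$ really is a James-Peel tree for $D$ (so that Lemma~\ref{lem:JPtree} applies and we have $\bigoplus_i S^{A_i} \hookrightarrow S^D$ over the leaves), and second, that the multiset of leaves --- hence the multiset of shapes at the leaves --- is unchanged by $\red$, so that $\bigoplus_i S^{A_i} \simeq S^D$ is preserved from the completeness of $\mathcal{T}$.

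For the one-child-equivalent case, $D = (\sigma,\tau)F$: here I would invoke the observation that $(\sigma,\tau)(-)$ is functorial on James-Peel trees, i.e.\ if $\mathcal{T}_1$ is a James-Peel tree for $F$ then $(\sigma,\tau)\mathcal{T}_1$ is a James-Peel tree for $(\sigma,\tau)F = D$ with the same leaf shapes up to equivalence (since reindexing rows and columns does not change the equivalence class of a diagram, and in particular does not change $S^{\bullet}$). By induction $\red(\mathcal{T}_1)$ is a complete, bushy James-Peel tree for $F$; applying $(\sigma,\tau)$ preserves all three properties. Note that $\red(\mathcal{T})$ in this case is \emph{the} tree $(\sigma,\tau)\red(\mathcal{T}_1)$, whose root $D$ either is a leaf (if $F$ was) or has the same number $\geq 2$ of children as the root of $\red(\mathcal{T}_1)$ --- this is where bushiness is genuinely used: because $\red(\mathcal{T}_1)$ is bushy, its root is either a leaf or has at least two children, so collapsing the edge $D\text{---}F$ never creates a new node with exactly one child. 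In the remaining case (at least two children, or one child not equivalent to $D$), $\red$ merely replaces each subtree $\mathcal{T}_i$ by $\red(\mathcal{T}_i)$; the edges out of $D$ are untouched, so the third condition of Definition~\ref{defn:JPtree} still holds verbatim for $D$, each $\red(\mathcal{T}_i)$ is by induction a complete bushy James-Peel tree for $F_i$, the leaf multiset is $\bigcup_i \{\text{leaves of }\red(\mathcal{T}_i)\}$ which equals $\bigcup_i \{\text{leaves of }\mathcal{T}_i\}$, and $D$ itself has $\geq 2$ children (if $D$ has a single child $F_1$ not equivalent to $D$, then by the third bullet this child arose from Corollary~\ref{cor:subdiagramstaircase} with some $1 \le k \le p$; one should note $p \ge 2$ here, since a $p=1$ move $\delta_1\cdot(1) = (1)$ applied in the $k=1$ or $k=p$ degenerate way does nothing, so a genuine single non-equivalent child forces $p\ge2$ and hence this case does not actually occur --- or, more safely, one can just fold a ``$p\ge2$'' hypothesis into the case analysis). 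Then conclude that $\red(\mathcal{T})$ is complete and bushy.

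The step I expect to be the main obstacle is the bookkeeping in the one-equivalent-child case, specifically verifying that $(\sigma,\tau)\red(\mathcal{T}_1)$ is honestly a James-Peel tree for $D$ and has the correct leaf shapes. One has to check that conjugating every move $R_{x\to y} \mapsto R_{\sigma(x)\to\sigma(y)}$, $C_{x\to y}\mapsto C_{\tau(x)\to\tau(y)}$ is compatible with the three conditions of Definition~\ref{defn:JPtree} --- condition two because $R_{\sigma(x)\to\sigma(y)}(\sigma,\tau)A = (\sigma,\tau)R_{x\to y}A$ and similarly for columns, and condition three because a $\delta_p\cdot(1)$ subdiagram sitting in rows $i_1<\cdots<i_p$, columns $j_1<\cdots<j_p$ of $A$ becomes the same staircase subdiagram in rows $\sigma(i_1)<\cdots$ (after re-sorting, which is harmless since diagrams are only defined up to row/column permutation) of $(\sigma,\tau)A$, with the labels transforming correctly. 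This is essentially the content of the ``functoriality'' remark preceding the definition of $\red$, and the honest task is just to say it carefully; the subtlety that bushiness of the recursively reduced subtree is exactly what prevents $\red$ from leaving behind a degenerate single-child-equivalent node is the one genuinely interlocking point between the two claims, which is why they must be proved together.
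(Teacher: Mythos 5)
There is a genuine gap in your treatment of bushiness. Your case analysis ends with a vertex $D$ having a single child $F_1$ not equivalent to $D$, and you try to rule this case out by appealing to the third bullet of Definition~\ref{defn:JPtree}. But that bullet only constrains vertices with \emph{more than one} child; a vertex with exactly one child may have its edge labeled by an arbitrary sequence of James-Peel moves, so nothing in the definition of a James-Peel tree forbids a single non-equivalent child (and even where the bullet does apply, the parenthetical ``perhaps not all such $k$ appear'' would still permit a lone child). In this case $\red(\mathcal{T})$ simply keeps that single-child edge, so your induction does not show the reduced tree is bushy. The missing ingredient is \emph{completeness}, which your bushiness argument never uses: if $A$ has a unique child $B$ in a complete tree, then the subtrees below $A$ and below $B$ have the same leaves and are each complete, so $S^A \simeq S^B$. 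If moreover $A \not\simeq B$, say $B = R_{a\to b}A$ with rows $a,b$ incomparable under inclusion, one can find cells $(a,j_1),(b,j_2) \in A$ with $(a,j_2),(b,j_1)\notin A$, and Theorem~\ref{thm:JP} gives $S^B \oplus S^{C_{j_1\to j_2}A} \hookrightarrow S^A$ with $S^{C_{j_1\to j_2}A} \neq 0$, contradicting $S^A \simeq S^B$. That is how the paper excludes the case; without some such argument your proof does not go through.

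The rest of your proposal is sound and essentially matches the paper: the paper asserts (and you verify by induction) that $\red(\mathcal{T})$ is still a James-Peel tree for $D$ with the same leaf equivalence classes, hence complete; and your observation that bushiness of the recursively reduced subtrees prevents the collapse of an \emph{equivalent}-child edge from creating a new degenerate vertex is correct and worth keeping. You only need to replace the faulty ``this case does not occur by the definition'' step with the completeness-plus-Theorem~\ref{thm:JP} argument above.
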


\begin{proof}
Note that $\red(\mathcal{T})$ is still a James-Peel tree for $D$. As equivalent diagrams have isomorphic Specht modules, if $\mathcal{T}$ is complete then so is $\red(\mathcal{T})$.

Next, for any vertex $A$ of $\mathcal{T}$, the subtree of
$\mathcal{T}$ below $A$ is itself a complete James-Peel tree for
$A$. In particular, $S^A$ is determined by the leaves below
it. Therefore, if $A$ has only a single child $B$ in $\mathcal{T}$,
then $S^A$ and $S^B$ are isomorphic.

Now suppose $\mathcal{T}$ is not bushy. The only way this can happen
is if $\mathcal{T}$ has a vertex $A$ with only one child $B$, but $A$
and $B$ are not equivalent. There is a James-Peel move relating $A, B$
(or a sequence of them, but we can consider them one at a time), say
$B = R_{a\to b}A$. If one of rows $a$ and $b$ of $A$ is contained in
the other, then $R_{a\to b}A$ is simply $A$ with those two rows
interchanged, so rows $a$ and $b$ are not comparable under inclusion since $A$ and $B$ are not equivalent.
There are cells $(a, j_1), (b, j_2) \in A$ with $(a, j_2), (b, j_1)
\notin A$. By Theorem~\ref{thm:JP}, $S^B \oplus S^{C_{j_1 \to j_2}A}
\hookrightarrow S^A$. As $S^{C_{j_1 \to j_2}A} \neq 0$, $S^B$ is not
isomorphic to $S^A$. This contradicts the previous paragraph, so
$\mathcal{T}$ must be bushy.
\end{proof}

\begin{lem} \label{lem:maxtreesize} The number of edges in a bushy tree with $k$ leaves is at most $2k-2$. \end{lem}

\begin{proof}
This follows by induction on the number of leaves.
\end{proof}

Recall $JP(w)$ is the James-Peel tree for $D(w)$ constructed in Theorem~\ref{thm:permutationJPtree}, and
let $RJP(w) = \red(JP(w))$. In the vicinity of a vertex $D(v)$, $JP(w)$ looks like this:
\begin{center}
\begin{tikzpicture}
\node { $D(v)$ }
 child { node { $A$ }
     child { node { $D(v^1)$ } edge from parent[left] node {$\scriptstyle R$} }
   edge from parent[left] node {$\scriptstyle C\quad$}
 }
 child { node { $D(v^2)$ } edge from parent[right] node {$\scriptstyle RC$} }
 child { node { $\cdots$ } edge from parent[style=white] }
 child { node { $D(v^{p-1})$ } edge from parent[left] node {$\scriptstyle  RC$} }
 child { node { $B$ }
     child { node { $D(v^p)$ } edge from parent[right] node {$\scriptstyle C$} }
   edge from parent[right] node {$\scriptstyle\quad R$}
 }
;
\end{tikzpicture}
\end{center}
Here the $v^i = vt_{rs}t_{rj_i}$ are the transitions of $v$, with $j_1
> \cdots > j_p$. The rows of $D(v)$ involved in row moves are $r, j_1,
\ldots, j_p$, and the columns involved in column moves are $v(s),
v(j_1), \cdots, v(j_p)$.  In the figure above, we have elongated two
of the paths for the proofs to come.

\begin{lem}\label{lem:equivalent} Suppose $v$ has transitions $v^1, \ldots, v^p$ as above. Then $D(v^1)$ is equivalent to $C_{v(s) \to v(j_1)}D(v)$, and $D(v^p)$ is equivalent to $R_{r \to j_p}D(v)$. \end{lem}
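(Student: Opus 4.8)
The plan is to reduce each equivalence to the observation (already used in this section to prove that $\red(\mathcal{T})$ is bushy) that a James--Peel move $R_{a\to b}$, resp.\ $C_{c\to d}$, applied to a diagram merely transposes two rows, resp.\ columns, as soon as rows $a,b$, resp.\ columns $c,d$, are comparable under containment. By Lemma~\ref{lem:JPtransitions} we have $D(v^1)=R_{r\to j_1}C_{v(s)\to v(j_1)}D(v)$ and $D(v^p)=C_{v(s)\to v(j_p)}R_{r\to j_p}D(v)$, so it suffices to show that rows $r$ and $j_1$ of $E:=C_{v(s)\to v(j_1)}D(v)$ are comparable, and that columns $v(s)$ and $v(j_p)$ of $F:=R_{r\to j_p}D(v)$ are comparable; then the outermost move in each expression only permutes rows, resp.\ columns, and the stated equivalences follow.

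The next step is to pin down the two relevant rows of $E$ and the two relevant columns of $F$. Since $v(j_1)<v(s)$, the cell $(j_1,v(s))$ is not in $D(v)$, so $C_{v(s)\to v(j_1)}$ does not affect row $j_1$; thus row $j_1$ of $E$ is $\{v(k):k>j_1,\ v(k)<v(j_1)\}$. In row $r$ the only cell lying in columns $v(s)$ or $v(j_1)$ is $(r,v(s))$, which slides left into the empty position $(r,v(j_1))$, so the column indices of row $r$ of $E$ are those of row $r$ of $D(v)$ with $v(s)$ replaced by $v(j_1)$. Because every $v(k)$ in row $j_1$ of $E$ satisfies $v(k)<v(j_1)<v(s)<v(r)$, the inclusion ``row $j_1\subseteq$ row $r$'' in $E$ is equivalent to the combinatorial statement that every $k>j_1$ with $v(k)<v(j_1)$ satisfies $r<k\le s$. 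Dually, $(r,v(j_p))\notin D(v)$ since $r>j_p$, so $R_{r\to j_p}$ leaves column $v(j_p)$ untouched, and it equals $\{i:i<j_p,\ v(i)>v(j_p)\}$; meanwhile $(r,v(s))$ slides down into the empty position $(j_p,v(s))$ (empty because $v(j_p)<v(s)$), so the row indices of column $v(s)$ of $F$ are those of column $v(s)$ of $D(v)$ with $r$ replaced by $j_p$. Then ``column $v(j_p)\subseteq$ column $v(s)$'' in $F$ reduces to the statement that every $i<j_p$ with $v(i)>v(j_p)$ satisfies $v(i)>v(s)$.

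Both combinatorial statements will come from the characterization that $vt_{rs}t_{rj}\in T(v)$ iff $v(j)<v(s)$ and no $j<j'<r$ has $v(j)<v(j')<v(s)$, together with the facts that $j_1$ (resp.\ $j_p$) is the largest (resp.\ smallest) such index and $v(j_1)<\cdots<v(j_p)<v(s)$. For the first statement I would first show that no $k$ with $j_1<k<r$ has $v(k)<v(s)$: otherwise, choosing $q\in[k,r)$ that maximizes $v(q)$ among positions in that range with value below $v(s)$, one checks directly that $vt_{rs}t_{rq}\in T(v)$ while $q\ge k>j_1$, contradicting the maximality of $j_1$. Since $k>s$ forces $v(k)>v(r)>v(j_1)$ by the maximality of $s$, and $k=r$ is excluded because $v(r)>v(j_1)$, the only surviving possibility for $k>j_1$ with $v(k)<v(j_1)$ is $r<k\le s$. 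The second statement is dual: if $i<j_p$ satisfied $v(j_p)<v(i)<v(s)$, then the $q\in[i,r)$ maximizing $v(q)$ among values below $v(s)$ would be a valid transition index with $v(q)\ge v(i)>v(j_p)=\max_\ell v(j_\ell)$, which is impossible.

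I expect the main obstacle to be the careful bookkeeping in the second step: verifying precisely which cells $C_{v(s)\to v(j_1)}$ and $R_{r\to j_p}$ insert or delete in the two rows, resp.\ columns, under scrutiny, checking that nothing in other columns (resp.\ rows) is disturbed and that no diagonal cell is ever created, so that the reductions to the two combinatorial statements are valid. Once those reductions are justified, the short extremality argument against the maximality of $j_1$ and the minimality of $j_p$ finishes the proof, and the equivalences of $D(v^1)$ with $C_{v(s)\to v(j_1)}D(v)$ and of $D(v^p)$ with $R_{r\to j_p}D(v)$ follow at once.
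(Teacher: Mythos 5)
Your proposal is correct and follows essentially the same route as the paper: write each of $D(v^1)$, $D(v^p)$ as a composition of the two commuting James--Peel moves from Lemma~\ref{lem:JPtransitions}, observe that one of the two moves merely transposes a pair of comparable rows (resp.\ columns), and verify the needed containment by an extremality argument against the maximality of $j_1$ (resp.\ the characterization of $j_p$) as a transition index. The only cosmetic difference is that you factor the transposition as the outer move and hence check comparability in the intermediate diagrams $E$ and $F$, whereas the paper factors it as the inner move and checks the (equivalent) containments directly in $D(v)$.
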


\begin{proof}
By Lemma~\ref{lem:JPtransitions}, 
\[
D(v^1) =  C_{v(s) \to v(j_1)} R_{r\to j_1} D(v) 
\]
and 
\[
D(v^p) = R_{r \to j_p} C_{v(s) \to v(j_p)} D(v).
\]
 It suffices to check that column $v(s)$ of $D(v)$ contains column $v(j_p)$, and that row $r$ contains row $j_1$. Suppose the first of these fails, that there is $(i, v(j_p)) \in D(v)$ with $(i, v(s)) \notin D(v)$. Choose the maximal such $i$. Then $vt_{rs}t_{ri}$ is a transition of $v$, which is impossible since $i < j_p$. The argument for the row containment is analogous.
\end{proof}

Thus, upon passing to $RJP(w)$, the edges $A\text{---}D(v^1)$ and $B\text{---}D(v^p)$ are contracted. For a diagram $D$, write $[D]$ for the equivalence class of diagrams containing $D$. We use this notation below when we have a diagram equivalent to $D$ but don't need to specify exactly what the diagram is. In the vicinity of a vertex $[D(v)]$, $RJP(w)$ has the form
\begin{center}
\begin{tikzpicture}
\node { $[D(v)]$ }
 child { node { $[D(v^1)]$ } edge from parent[left] node {$\scriptstyle C\quad$} }
 child { node { $[D(v^2)]$ } edge from parent[right] node {$\scriptstyle RC$} }
 child { node { $\cdots$ } edge from parent[style=white] }
 child { node { $[D(v^{p-1})]$ } edge from parent[left] node {$\scriptstyle  RC$} }
 child { node { $[D(v^p)]$ } edge from parent[right] node {$\scriptstyle \quad R$} }
;
\end{tikzpicture}
\end{center}
Strictly speaking, we haven't shown that both moves on the edges labeled $RC$ survive in $RJP(w)$, but this won't be important. We therefore will speak of $R$-edges, $C$-edges, and $RC$-edges of $RJP(w)$, each non-leaf vertex having exactly one $R$-edge and one $C$-edge leading to children. 

Now suppose $\mathcal{T}$ is a subtree of $RJP(w)$ with the same
root. Let $R(\mathcal{T})$ be the union of $\{a,b\}$ over all $R_{a\to
b}$ appearing in $\mathcal{T}$, and $C(\mathcal{T})$ the union of
$\{c,d\}$ over all $C_{c\to d}$ appearing in $\mathcal{T}$. Write
$R(\mathcal{T}) \cup w^{-1}C(\mathcal{T}) = \{i_1 < \cdots < i_r\}$,
and define the permutation associated to this tree   
\begin{equation*}
w_{\mathcal{T}} = fl(w(i_1)\cdots w(i_r)).
\end{equation*}

\begin{rem} In Section~\ref{sec:background} we noted that, for convenience, $w$ could be replaced by $1^m \times w$ to remove the necessity of sometimes replacing $v$ by $1 \times v$ in the Lascoux-Sch\"{u}tzenberger tree. The definition of $w_{\mathcal{T}}$ above is then an abuse of notation, since we are really taking a subsequence of $1^m \times w$. However, rows and columns $1, \ldots, m$ of $D(w)$ are empty, so are not affected at all by the James-Peel moves in $RJP(w)$ or $\mathcal{T}$. This means that the subsequence defining $w_{\mathcal{T}}$ occurs entirely after the $m$th position of $1^m \times w$, so we are free to shift it down by $m$ and consider it as a subsequence of $w$. This applies also to Theorems \ref{thm:skeleton} and \ref{thm:kvex} below.
\end{rem}

We would like to bound the number of letters of $w_{\mathcal{T}}$ in terms of the number of leaves of $\mathcal{T}$. Such a bound depends on the sizes of $R(\mathcal{T})$ and $C(\mathcal{T})$, so the following definition is convenient to get good bounds.

\begin{defn} A subtree $\mathcal{T}$ of $RJP(w)$ with root $D(w)$ is \emph{colorful} if each
non-leaf vertex of $\mathcal{T}$ has at least the two children
corresponding to its $R$-edge and its $C$-edge.  Thus, colorful implies bushy.  
\end{defn}

\begin{lem} \label{lem:subtreepattern} Say $\mathcal{T}$ is a subtree of $RJP(w)$ rooted at $D(w)$ with $k$ leaves. Then $k \leq EG(w_{\mathcal{T}})\leq EG(w)$. If $\mathcal{T}$ is colorful, then $w_{\mathcal{T}} \in S_m$ for some $m \leq 4k-4$. \end{lem}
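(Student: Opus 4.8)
The plan is to prove the two assertions separately, both by exhibiting a suitable sub-James-Peel-tree and invoking Corollary~\ref{cor:JPtreesubdiagram} (equivalently Lemma~\ref{lem:JPtree}). For the first inequality $k \leq EG(w_{\mathcal{T}}) \leq EG(w)$, I would argue that $\mathcal{T}$, which is a subtree of $RJP(w)$ rooted at $D(w)$, is really a James-Peel tree not just for $D(w)$ but can be transplanted onto $D(w_{\mathcal{T}})$. The key point is that every James-Peel move appearing in $\mathcal{T}$ only involves rows in $R(\mathcal{T})$ and columns in $C(\mathcal{T})$, and by construction $R(\mathcal{T}) \cup w^{-1}C(\mathcal{T}) = \{i_1 < \cdots < i_r\}$ indexes precisely the rows and columns of $D(w)$ on which the pattern $w_{\mathcal{T}}$ lives. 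Since pattern containment corresponds to subdiagram inclusion (the discussion after Definition~\ref{inducedJPtree}), $D(w_{\mathcal{T}})$ is, up to reindexing, the subdiagram of $D(w)$ cut out by these rows and columns, and the moves of $\mathcal{T}$ restrict to honest James-Peel moves on it. So $\mathcal{T}$ induces a James-Peel tree $\mathcal{T}'$ for $D(w_{\mathcal{T}})$ with the same shape, hence with $k$ leaves. By Lemma~\ref{lem:JPtree}, $\bigoplus$ over the leaves of $\mathcal{T}'$ of $S^{A_i} \hookrightarrow S^{D(w_{\mathcal{T}})}$, and since each $S^{A_i}$ is a nonzero Specht module this gives at least $k$ summands; comparing with $s_{D(w_{\mathcal{T}})} = F_{w_{\mathcal{T}}} = \sum_{P \in \mathcal{EG}(w_{\mathcal{T}})} s_{\shape(P)^t}$ yields $k \leq EG(w_{\mathcal{T}})$. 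The inequality $EG(w_{\mathcal{T}}) \leq EG(w)$ is then immediate from Theorem~\ref{thm:patternthm} (Corollary~\ref{cor:kvexrespectspatterns}), since $w_{\mathcal{T}}$ is a pattern in $w$.

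For the size bound, suppose $\mathcal{T}$ is colorful, so in particular bushy with $k$ leaves; by Lemma~\ref{lem:maxtreesize} it has at most $2k-2$ edges. I would bound $|R(\mathcal{T})|$ and $|C(\mathcal{T})|$ separately. Each edge of $\mathcal{T}$ is labeled by either a single move $R$, a single move $C$, or a pair $RC$; in the worst case every edge contributes to both $R(\mathcal{T})$ and $C(\mathcal{T})$, and each such move $R_{a \to b}$ or $C_{c \to d}$ introduces at most two new elements into the respective set. Thus $|R(\mathcal{T})| \leq 2(2k-2) = 4k-4$ and likewise $|C(\mathcal{T})| \leq 4k-4$. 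Naively this only gives $|R(\mathcal{T}) \cup w^{-1}C(\mathcal{T})| \leq 8k - 8$, which is not good enough, so the colorfulness hypothesis must be used to do better: at a non-leaf vertex $[D(v)]$, the $R$-edge and $C$-edge leading to children both reuse the \emph{same} distinguished rows and columns $r, j_1, \ldots, j_p$ and $v(s), v(j_1), \ldots, v(j_p)$ coming from the transition structure (Lemma~\ref{lem:equivalent} and the picture preceding Definition of colorful). So the rows appearing in the $C$-edge's ancestor structure and the columns appearing in the $R$-edge overlap heavily; counting carefully, the rows touched at a vertex and the (inverse images of) columns touched at that same vertex come from a common set of indices of size one more than the number of children, so per vertex the contribution to $R(\mathcal{T}) \cup w^{-1}C(\mathcal{T})$ is controlled by the number of edges down from that vertex rather than twice that. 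Summing over all non-leaf vertices and using that the total edge count is $\leq 2k-2$ gives $|R(\mathcal{T}) \cup w^{-1}C(\mathcal{T})| \leq 4k-4$, so $w_{\mathcal{T}} \in S_m$ with $m = r \leq 4k-4$.

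The main obstacle is the bookkeeping in the last step: getting the constant down from the naive $8k-8$ to $4k-4$ requires exploiting that, at each vertex corresponding to a transition step, the set of rows involved in row moves and the set of columns involved in column moves are not independent but are governed by a single sequence $r > j_1 > \cdots > j_p$ of positions (and the correspondingly determined values), so that $R(\mathcal{T})$ and $w^{-1}C(\mathcal{T})$ draw from a shared pool at each vertex. I would make this precise by an induction on $\mathcal{T}$: for the subtree below a vertex with $p \geq 2$ children, the root contributes a set of size $p+1$ to $R(\mathcal{T}) \cup w^{-1}C(\mathcal{T})$ (namely $\{j_p, \ldots, j_1, r\}$, with the column indices $w^{-1}$-pulled back to this same set by the choice of transitions), and the $p$ subtrees contribute disjointly, so if $e$ is the total number of edges one gets $r \leq 2e \leq 2(2k-2) = 4k-4$. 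The rest of the argument is routine once the pattern-to-subdiagram dictionary and Lemma~\ref{lem:JPtree} are in hand.
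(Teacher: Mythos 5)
Your argument for $k \leq EG(w_{\mathcal{T}}) \leq EG(w)$ is correct and is essentially the paper's: after relabeling rows and columns, $\mathcal{T}$ is a (not necessarily complete) James-Peel tree for $D(w_{\mathcal{T}})$, so Lemma~\ref{lem:JPtree} produces $k$ nonzero Specht summands inside $S^{D(w_{\mathcal{T}})}$, and Theorem~\ref{thm:patternthm} gives the second inequality.

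The counting argument for $m \leq 4k-4$ has a genuine gap. You try to beat your naive bound of $8k-8$ by claiming that, at each vertex, the rows touched and the $w^{-1}$-preimages of the columns touched ``come from a common set of indices of size one more than the number of children.'' This is not true. At a vertex $[D(v)]$ with transitions $v^i = vt_{rs}t_{rj_i}$, the columns involved are $v(s), v(j_1), \ldots$, and what enters the definition of $w_{\mathcal{T}}$ is their preimage under $w^{-1}$, not under $v^{-1}$; since $v$ differs from $w$ away from the root, $w^{-1}(v(j_i))$ has no reason to equal $j_i$, so the rows and pulled-back columns at such a vertex need not overlap at all. (Even at the root, where $v=w$, the shared pool is $\{s, r, j_1, \ldots, j_p\}$, of size $p+2$: the source column $w(s)$ contributes the position $s$, which your set $\{j_p,\ldots,j_1,r\}$ omits.) The actual saving lives entirely within the rows and within the columns separately, and this is where colorfulness and Lemma~\ref{lem:equivalent} are really used: all edges down from a vertex share the single source row $r$ and single source column $v(s)$, and in $RJP(w)$ the edge to $[D(v^1)]$ carries no row move while the edge to $[D(v^p)]$ carries no column move (those moves only permuted rows or columns and were contracted in the reduced tree). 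Hence a colorful vertex with $p'$ children contributes at most $p'$ elements to $R(\mathcal{T})$ and at most $p'$ to $C(\mathcal{T})$: the pure-$C$ child absorbs the cost of the shared row $r$, and the pure-$R$ child absorbs the shared column $v(s)$. Summing over vertices gives $m \leq |R(\mathcal{T})| + |C(\mathcal{T})| \leq 2|E(\mathcal{T})| \leq 2(2k-2) = 4k-4$ by Lemma~\ref{lem:maxtreesize}; no sharing between $R(\mathcal{T})$ and $w^{-1}C(\mathcal{T})$ is needed or available.
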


\begin{proof}
Up to relabeling rows and columns to account for flattening, the tree
$\mathcal{T}$ is a James-Peel tree for $D(w_{\mathcal{T}})$ (not
necessarily complete), so $k \leq
EG(w_{\mathcal{T}})$. Theorem~\ref{thm:patternthm} implies
$EG(w_{\mathcal{T}}) \leq EG(w)$.

Suppose $\mathcal{T}$ is colorful. The number of letters in
$w_{\mathcal{T}}$ is at most $|R(\mathcal{T})| + |C(\mathcal{T})|$.
Consider the vertex indexed by $D(v)$ in the full tree $JP(w)$ .  Say $v^i =
vt_{rs}t_{rj_i}$ are the transitions of $v$, with $j_1 > \cdots >
j_p$. The rows of $D(v)$ involved in row moves are $r, j_1, \ldots,
j_p$, and the columns involved in column moves are $v(s), v(j_1),
\cdots, v(j_p)$. However, $R_{r\to j_{1}}D(v) \simeq D(v)$ and
$C_{v(s)\to v(j_{p})}D(v) \simeq D(v)$ by Lemma~\ref{lem:equivalent}, so
these edges are contracted in the reduced tree, so row $j_{1}$ and
column $v(j_{p})$ will not contribute to $R(\mathcal{T})$ and
$C(\mathcal{T})$ respectively.  Thus, if a vertex $F$ (which is
equivalent to some $D(v)$) of $\mathcal{T}$ has $p$ children, the
edges leading down from $F$ contribute at most $p$ elements to each of
$R(\mathcal{T})$ and $C(\mathcal{T})$. Summing over all vertices,
\begin{align*}
|R(\mathcal{T})| + |C(\mathcal{T})| &\leq 2\deg (D(w)) + \sum_{\substack{F \in \mathcal{T}\\ F \neq D(w)}} 2(\deg(F) - 1)\\
&= 2\left[\sum_{F \in \mathcal{T}} \deg(F)\right] - 2|V(\mathcal{T})| + 2\\
&= 4|E(\mathcal{T})| - 2(|V(\mathcal{T})| - 1)\\
&= 2|E(\mathcal{T})|\\
&\leq 4k-4,
\end{align*}
with the last inequality by Lemma~\ref{lem:maxtreesize}. 
\end{proof}

In particular, taking $\mathcal{T} = RJP(w)$ in
Lemma~\ref{lem:subtreepattern} gives the following theorem.  

\begin{thm} \label{thm:skeleton} Any permutation $w$ contains a pattern $v \in S_m$ such that $EG(w) = EG(v)$, for some $m \leq 4\ EG(w) - 4$. \end{thm}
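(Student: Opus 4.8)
The plan is to feed the entire reduced tree $\mathcal{T}=RJP(w)$, regarded as a subtree of itself with root $D(w)$, into Lemma~\ref{lem:subtreepattern}. That lemma then produces a pattern $w_{\mathcal{T}}$ of $w$ with $k\le EG(w_{\mathcal{T}})\le EG(w)$, where $k$ is the number of leaves of $RJP(w)$, and with $w_{\mathcal{T}}\in S_m$ for some $m\le 4k-4$ provided $RJP(w)$ is colorful. So the whole argument reduces to two points: that $RJP(w)$ has exactly $EG(w)$ leaves, and that $RJP(w)$ is colorful. As in the remark following the definition of $w_{\mathcal{T}}$, one may carry this out for $1^m\times w$ and shift the resulting subsequence back down, so it really does yield a pattern of $w$.

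For the leaf count, recall from Theorem~\ref{thm:permutationJPtree} that $JP(w)$ is a complete James-Peel tree for $D(w)$ whose leaves are precisely the diagrams $D(v)$ with $v$ a leaf of the Lascoux-Sch\"utzenberger tree of $w$. Each step in forming $\red(JP(w))=RJP(w)$ either leaves a vertex alone, recurses into the subtrees below a vertex, or deletes a single-child vertex while keeping the (relabeled) subtree below it; in every case the set of leaves is carried over bijectively. Hence $RJP(w)$ has one leaf for each leaf of the L-S tree. Since every such leaf $v$ is vexillary, $F_v=s_{\shape(v)}$, so the identity $F_w=\sum_v s_{\shape(v)}$, with $v$ ranging over the leaves of the L-S tree, shows that the number of L-S leaves equals $\sum_\lambda a_{w\lambda}=EG(w)$. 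Thus $k=EG(w)$.

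For colorfulness, $RJP(w)$ is bushy (being $\red$ of a complete James-Peel tree), and the discussion preceding the definition of ``colorful'' identifies every non-leaf vertex of $RJP(w)$ as a class $[D(v)]$ with children $[D(v^1)],\dots,[D(v^p)]$ indexed by the transitions $v^i=vt_{rs}t_{rj_i}$ of $v$, the edge to $[D(v^1)]$ being a $C$-edge and the edge to $[D(v^p)]$ an $R$-edge. Bushiness forces $p\ge 2$ at every non-leaf vertex, so both of these edges are present, and $RJP(w)$ therefore contains the $R$-edge child and the $C$-edge child of each of its non-leaf vertices, i.e.\ it is colorful. The one point that must be checked to make this go through — and the only real obstacle in the proof — is that a vertex with just one transition cannot survive as a non-leaf of $RJP(w)$: if $|T(v)|=1$ then Lemma~\ref{lem:equivalent}, together with the fact established in its proof that column $v(s)$ of $D(v)$ contains column $v(j_1)$ and that row $r$ contains row $j_1$, gives $D(v^1)\simeq D(v)$; such a vertex thus has a single child equivalent to itself and is absorbed by $\red$. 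With this in hand, applying Lemma~\ref{lem:subtreepattern} to $\mathcal{T}=RJP(w)$ gives $EG(w)\le EG(w_{\mathcal{T}})\le EG(w)$, hence $EG(w_{\mathcal{T}})=EG(w)$, together with $w_{\mathcal{T}}\in S_m$ for some $m\le 4k-4=4\,EG(w)-4$. Taking $v=w_{\mathcal{T}}$, which is a pattern contained in $w$ by its very construction, proves the theorem.
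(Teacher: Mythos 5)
Your proof is correct and is exactly the paper's argument: the paper also proves this theorem by taking $\mathcal{T} = RJP(w)$ in Lemma~\ref{lem:subtreepattern}, using that $RJP(w)$ is complete (so its leaf count is $EG(w)$) and colorful (since bushiness forces every non-leaf vertex to retain both its $R$-edge and $C$-edge children). The details you supply — the leaf count via the L-S tree and the absorption of single-transition vertices via Lemma~\ref{lem:equivalent} — are precisely the facts the paper leaves implicit in its one-line proof.
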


More generally, Lemma~\ref{lem:subtreepattern} lets us show that $k$-vexillarity is characterized by avoiding a \emph{finite} set of patterns.

\begin{thm} \label{thm:kvex} Say $w$ is a permutation with $EG(w) > k$. Then $w$ contains a pattern $v \in S_m$ such that $EG(v) > k$, for some $m \leq 4k$. \end{thm}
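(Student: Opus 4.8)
The plan is to apply Lemma~\ref{lem:subtreepattern} to a carefully chosen colorful subtree $\mathcal{T}$ of $RJP(w)$, rather than to all of $RJP(w)$ at once. Since $EG(w) > k$, the tree $RJP(w)$ is a complete (hence bushy) James-Peel tree for $D(w)$ with more than $k$ leaves. The idea is to prune $RJP(w)$ down to a subtree $\mathcal{T}$, rooted at $D(w)$, which still has just over $k$ leaves but is as small as possible, so that the bound $m \leq 4k-4$ from Lemma~\ref{lem:subtreepattern} can be improved to $m \leq 4k$. Concretely, I would build $\mathcal{T}$ greedily: start from the root and repeatedly expand a current leaf into all of its children in $RJP(w)$, stopping as soon as the total number of leaves of the subtree built so far exceeds $k$. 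At each expansion step a leaf with $p \geq 2$ children is replaced, so the leaf count increases by $p - 1 \geq 1$; hence when we first exceed $k$ we have at most $k + (p-1)$ leaves for the $p$ used in the final expansion. To control this we should expand vertices with the fewest children first (or simply observe that we can expand one "two-child" pair at a time using only the $R$-edge and $C$-edge, which keeps each step's increase equal to exactly $1$); either way we can arrange that $\mathcal{T}$ has exactly $k+1$ leaves.

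The subtlety is that a subtree obtained this way need not be colorful: when we expand a vertex we might not include both its $R$-edge child and its $C$-edge child. So the second step is to enlarge $\mathcal{T}$ to a colorful subtree $\mathcal{T}'$ by, at every non-leaf vertex that is missing one of its two mandatory children, adjoining that child as a new leaf (and not expanding it further). Each such addition increases the leaf count by at most $1$; crucially, the number of non-leaf vertices of $\mathcal{T}$ is at most the number of leaves minus one (a property of trees in which every internal node has $\geq 2$ children, via Lemma~\ref{lem:maxtreesize}-type counting), so at most $k$ new leaves get added, giving $\mathcal{T}'$ at most $2k+1$ leaves. Hmm — that is too weak. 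The better route: note that in the greedy construction above, as soon as we expand a vertex we should expand it \emph{fully} into all its $RJP(w)$-children, so every internal vertex of $\mathcal{T}$ already has all its children and $\mathcal{T}$ is automatically colorful. Then $\mathcal{T}$ is a colorful subtree with between $k+1$ and $k + (p_{\max}-1)$ leaves, where $p_{\max}$ is the number of children of the last-expanded vertex. Since we are free to choose the expansion order, expand vertices in nondecreasing order of their number of children; if ever we are forced to use a vertex with $p$ children to push past $k$, then all previously expanded vertices also had $\leq p$ children, and in fact one can stop with exactly $k+1$ leaves whenever some internal vertex has $\geq 2$ children — split its child-set and only a subtree is needed. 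The cleanest statement: choose $\mathcal{T}$ to be a minimal colorful subtree of $RJP(w)$ with at least $k+1$ leaves; minimality forces it to have at most $k+1$ leaves. (If it had $\geq k+2$, deleting one subtree hanging off a deepest internal vertex — while retaining that vertex's $R$- and $C$-children so colorfulness is preserved, which is possible once the vertex has $\geq 3$ children, or deleting a whole deepest internal vertex with exactly its two children when it has exactly $2$ — still leaves $\geq k+1$ leaves, contradicting minimality.)

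With $\mathcal{T}$ a colorful subtree of $RJP(w)$ having at most $k+1$ leaves (and at least $k+1$, so exactly $k+1$), Lemma~\ref{lem:subtreepattern} immediately gives a pattern $v = w_{\mathcal{T}} \in S_m$ with $m \leq 4(k+1) - 4 = 4k$ and with $EG(v) \geq (\text{number of leaves of } \mathcal{T}) = k+1 > k$. That is exactly the conclusion. The one place requiring care — and the main obstacle — is the pruning argument guaranteeing a colorful subtree with exactly $k+1$ leaves: one must check that colorfulness can be maintained while removing subtrees, which depends on the structure of $RJP(w)$ near each vertex (each internal vertex has a distinguished $R$-edge and $C$-edge, plus possibly extra $RC$-edges, per the picture preceding Definition of colorful). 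Dropping the $RC$-edge children first, then if necessary combining an internal vertex with just its two mandatory children, handles all cases; I would write this as a short induction on the number of leaves in excess of $k+1$. Everything else is a direct invocation of Lemma~\ref{lem:subtreepattern} and Lemma~\ref{lem:maxtreesize}.
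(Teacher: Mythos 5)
Your proof is correct and follows essentially the same route as the paper: both reduce the theorem to exhibiting a colorful subtree of $RJP(w)$ with exactly $k+1$ leaves and then invoke Lemma~\ref{lem:subtreepattern} to get $m \leq 4(k+1)-4 = 4k$ and $EG(w_{\mathcal{T}}) \geq k+1$. The only (immaterial) difference is that the paper builds the subtree bottom-up, adding children one at a time starting with the mandatory $R$- and $C$-children, whereas you prune $RJP(w)$ down via a minimality argument; your case analysis for preserving colorfulness while deleting leaves is sound.
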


\begin{proof} By Lemma~\ref{lem:subtreepattern}, it suffices to
exhibit a colorful subtree of $RJP(w)$ rooted at $D(w)$ with $k+1$
leaves. Construct such a tree $\mathcal{T}$ as follows. First take
$\mathcal{T}$ to have only the vertex $D(w)$. Add the two children of
$D(w)$ corresponding to the $R$-edge and the $C$-edge.  Continue
adding the remaining children of $D(w)$ until $\mathcal{T}$ has $k+1$
leaves or all children have been added.  If all children of $D(w)$
have been added and $\mathcal{T}$ has fewer than $k+1$ leaves, then
since $RJP(w)$ has at least $k+1$ leaves, there is a leaf $F$ of
$\mathcal{T}$ with at least two children. Now repeat this process
starting with $F$ in place of $D(w)$. Iterating, eventually
$\mathcal{T}$ will have $k+1$ leaves, and is colorful by construction.
\end{proof}

\begin{cor} \label{cor:kvexbound} A permutation $w$ is $k$-vexillary if and only if it avoids all non-$k$-vexillary patterns in $S_m$ for $1 \leq m \leq 4k$. \end{cor}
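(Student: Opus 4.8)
The plan is to deduce Corollary~\ref{cor:kvexbound} almost immediately from Theorem~\ref{thm:kvex} together with Corollary~\ref{cor:kvexrespectspatterns}. Both directions are short, so the ``proof'' here is really just a matter of assembling the two pieces in the right logical order and observing that both quantifiers become finite.

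First I would handle the forward direction. Suppose $w$ is $k$-vexillary, and let $v \in S_m$ with $1 \leq m \leq 4k$ be a non-$k$-vexillary pattern, i.e. $EG(v) > k$. If $w$ contained $v$, then Corollary~\ref{cor:kvexrespectspatterns} would force $v$ to be $k$-vexillary as well, a contradiction. Hence $w$ avoids every such $v$. (Note this direction does not even need the bound $m \leq 4k$; it would hold with $S_m$ replaced by all of $S_\infty$.)

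For the reverse direction, suppose $w$ is \emph{not} $k$-vexillary, so $EG(w) > k$. By Theorem~\ref{thm:kvex}, $w$ contains a pattern $v \in S_m$ with $m \leq 4k$ and $EG(v) > k$; that is, $v$ is a non-$k$-vexillary pattern lying in some $S_m$ with $1 \leq m \leq 4k$ (the lower bound $m \geq 1$ being trivially satisfied, since a non-$k$-vexillary permutation is necessarily nontrivial for $k \geq 1$). Thus $w$ fails to avoid all non-$k$-vexillary patterns in $S_1 \cup \cdots \cup S_{4k}$. Taking the contrapositive gives the claim.

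Finally, I would remark that $V_k$ in the statement of Theorem~\ref{thm:kvexthm} can be taken to be the (finite) set of non-$k$-vexillary permutations in $S_1 \cup \cdots \cup S_{4k}$, so Corollary~\ref{cor:kvexbound} in particular proves Theorem~\ref{thm:kvexthm}. There is no real obstacle in this argument: the entire content has already been extracted into Theorem~\ref{thm:kvex} (whose proof, via the colorful subtree construction and the edge-counting bound of Lemma~\ref{lem:subtreepattern}, is where the work lies) and into the pattern-monotonicity of $EG$ from Corollary~\ref{cor:kvexrespectspatterns}. The only thing to be slightly careful about is matching up the phrasing ``avoids all non-$k$-vexillary patterns in $S_m$ for $1 \leq m \leq 4k$'' with the existence statement of Theorem~\ref{thm:kvex}, which is purely bookkeeping.
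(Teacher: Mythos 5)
Your proposal is correct and is exactly the argument the paper intends: the corollary is stated without proof as an immediate consequence of Theorem~\ref{thm:kvex} (for the contrapositive of the ``avoids $\Rightarrow$ $k$-vexillary'' direction) and Corollary~\ref{cor:kvexrespectspatterns} (for the other direction). Your assembly of the two pieces, including the observation that the forward direction needs no bound on $m$, matches the paper's reasoning.
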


For $k = 2$, we can explicitly find all non-$2$-vexillary patterns in $S_m$ for $1 \leq m \leq 8$ and eliminate those containing a smaller non-$2$-vexillary pattern to find a minimal list.

\begin{thm} \label{thm:2vexthm} A permutation $w$ is $2$-vexillary if and only if it avoids all of the following $35$ patterns.
\begin{equation*}
\begin{array}{ccccccc}
21543 & 231564   & 315264 & 5271436 & 26487153 & 54726183 & 64821537\\
32154 & 241365   & 426153 & 5276143 & 26581437 & 54762183 & 64872153\\
214365 & 241635  & 2547163 & 5472163 & 26587143 & 61832547 & 65821437\\
214635 & 312645  & 4265173 & 25476183 & 51736284 & 61837254 & 65827143\\
215364 & 314265  & 5173264 & 26481537 & 51763284 & 61873254 & 65872143
\end{array}
\end{equation*}
\end{thm}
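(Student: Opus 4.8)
The plan is to derive the statement from Corollary~\ref{cor:kvexbound} together with a finite computation. With $k = 2$, Corollary~\ref{cor:kvexbound} says that a permutation is $2$-vexillary if and only if it avoids every non-$2$-vexillary permutation in $S_m$ with $1 \le m \le 4k = 8$. By Corollary~\ref{cor:kvexrespectspatterns} the set of non-$2$-vexillary permutations is an up-set in the pattern-containment order, hence is determined by its minimal elements; and if $w$ is a minimal such permutation, then Corollary~\ref{cor:kvexbound} gives that $w$ contains a non-$2$-vexillary pattern in some $S_m$ with $m \le 8$, which by minimality of $w$ must be $w$ itself. So every minimal non-$2$-vexillary permutation lies in $S_1 \cup \cdots \cup S_8$, and the theorem reduces to checking that these minimal permutations are precisely the $35$ in the list (which, as one finds, all lie in $S_5 \cup \cdots \cup S_8$).

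To carry this out I would, for each $w \in S_1 \cup \cdots \cup S_8$, compute $EG(w)$ by constructing the Lascoux--Sch\"utzenberger tree of $w$: the transitions $T(w)$ are read off directly from \eqref{eq:transitions}, one recurses until every leaf avoids $2143$, and by the identity $F_w = \sum_v s_{\shape(v)}$ over the leaves $v$ (equivalently, iterating \eqref{eq:transition-recurrence}) the quantity $EG(w)$ is just the number of leaves of this tree. This is entirely effective and fast for $n \le 8$; the amount of work can be trimmed using symmetries such as $EG(w) = EG(w^{-1})$, but this is not necessary. Having tabulated $EG$, the non-$2$-vexillary permutations in each $S_m$ are precisely those with $EG(w) \ge 3$, and deleting from this set every permutation that contains a strictly smaller non-$2$-vexillary permutation leaves the minimal ones; the claim is that this yields exactly the $35$ permutations in the theorem.

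What remains is a verification in three parts, all finite and routine: (a) each of the $35$ listed permutations $v$ satisfies $EG(v) \ge 3$, hence fails to be $2$-vexillary; (b) no one of the $35$ contains another of the $35$ as a pattern, so the list is an antichain with no redundancies; and (c) every $w \in S_m$ with $3 \le m \le 8$ and $EG(w) \ge 3$ contains at least one of the $35$ as a pattern, which together with Corollary~\ref{cor:kvexbound} shows the list is both complete and sufficient. Checks (a) and (b) are immediate; (c) requires running through all of $S_m$ up to $m = 8$ and is the most substantial piece of bookkeeping, though still a small computation. It is worth emphasizing that the only genuinely nontrivial ingredient has already been supplied by Corollary~\ref{cor:kvexbound}: a priori there is no reason a minimal forbidden pattern could not be arbitrarily large, and it is precisely the bound $m \le 4k$ that makes the search finite and hence this explicit characterization possible.
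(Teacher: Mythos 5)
Your proposal is correct and follows essentially the same route as the paper: the paper's justification for Theorem~\ref{thm:2vexthm} is precisely to invoke Corollary~\ref{cor:kvexbound} with $k=2$, compute all non-$2$-vexillary permutations in $S_m$ for $m\le 8$ (via the Lascoux--Sch\"utzenberger tree / transition recurrence), and discard those containing a smaller non-$2$-vexillary pattern to obtain the minimal list of $35$. Your additional bookkeeping steps (a)--(c) are exactly the content of that finite verification.
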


This process is also feasible for $k = 3$, in which case we need to
look at non-3-vexillary patterns up through $S_{12}$. Here we find
that the bound in Corollary~\ref{cor:kvexbound} is not sharp.

\begin{thm} \label{thm:3vexthm} A permutation $w$ is $3$-vexillary if and only if it avoids a list of $91$ patterns in $S_6 \cup S_7 \cup S_8$.
For the full list of patterns, see
\end{thm}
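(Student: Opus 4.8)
The proof of Theorem~\ref{thm:3vexthm} is a finite computation, analogous to that for Theorem~\ref{thm:2vexthm} but with one extra wrinkle. Call $v$ a \emph{minimal non-$3$-vexillary pattern} if $EG(v) > 3$ while every permutation properly contained in $v$ is $3$-vexillary. Since $3$-vexillarity passes to patterns (Corollary~\ref{cor:kvexrespectspatterns}), a permutation $w$ is $3$-vexillary if and only if it avoids every minimal non-$3$-vexillary pattern, and by Corollary~\ref{cor:kvexbound} each of these lies in some $S_m$ with $m \le 4 \cdot 3 = 12$. The theorem asserts that they in fact all lie in $S_6 \cup S_7 \cup S_8$ and that there are exactly $91$ of them; write $V_3$ for this set.

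First I would compute $EG(v)$ for every $v \in S_m$ with $m \le 8$ by running the Lascoux-Sch\"utzenberger recurrence \eqref{eq:transition-recurrence}: $EG(v)$ is the number of leaves of the L-S tree of $v$, which is built by repeatedly replacing non-vexillary permutations by their transitions. As $8! = 40320$, this is immediate. One then collects the permutations with $EG(v) > 3$, discards those containing a strictly smaller one, and checks that the remaining set has $91$ elements, all in $S_6 \cup S_7 \cup S_8$; in particular everything in $S_m$ for $m \le 5$ is $3$-vexillary. This both produces $V_3$ and confirms that every member of $V_3$ is non-$3$-vexillary and minimal.

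The extra wrinkle, absent for $k = 2$ since there $4k = 8$, is that here one must still rule out minimal non-$3$-vexillary patterns in $S_9, \dots, S_{12}$. Equivalently, by Corollary~\ref{cor:kvexbound} it suffices to show that every permutation in $S_9 \cup \cdots \cup S_{12}$ avoiding all of $V_3$ has $EG \le 3$. Direct enumeration of $S_{12}$ is infeasible ($12! \approx 4.8 \times 10^{8}$), so instead one generates only the $V_3$-avoiding permutations, building them length by length: extend each $V_3$-avoiding permutation of $S_{m-1}$ by inserting a new value in every position, pruning any extension that creates an occurrence of a pattern in $V_3$. Avoiding $91$ patterns is extremely restrictive, so the number of survivors of length $12$ is small, and one runs the L-S recurrence on each to confirm $EG \le 3$. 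The main obstacle is the correctness and exhaustiveness of this last search: one must be certain that the incremental generation of $V_3$-avoiders misses nothing and that the pattern-containment and $EG$ routines are implemented without error. As an independent sanity check, the resulting counts of $3$-vexillary permutations in $S_n$ should agree with OEIS sequence A223034.
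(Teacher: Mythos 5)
Your proposal is correct and follows essentially the same route as the paper: the paper reduces the theorem to a finite computation via Corollary~\ref{cor:kvexbound} (checking minimal non-$3$-vexillary patterns up through $S_{12}$) and reports the result, while you additionally spell out the logical reduction to minimal patterns and a sensible pruning strategy for making the $S_9$--$S_{12}$ search feasible. No gap.
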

\begin{center}
 \url{http://www.math.washington.edu/~billey/papers/k.vex.html}.
\end{center}

The 3-vexillary permutations have some interesting properties.  First,
in Section~\ref{sec:mult.free} we will show their Stanley symmetric
functions are always multiplicity free.  Second, their essential sets
are relatively simple.

In \cite{Fulton1}, Fulton defined the \textit{essential set} of a
permutation $w$, $Ess(w)$, to be the set of southeast corners of the
connected components of the diagram $D(w)$.  He showed that the rank
conditions for the Schubert variety indexed by $w$ need only be
checked at cells in the essential set.  Furthermore, he showed that
the essential set of a vexillary permutation has no two cells
$(i_{1},j_{1})$ and $(i_{2},j_{2})$ with $i_{1}< i_{2}$ and
$j_{1}<j_{2}$. Thus, the essential set lies along a lattice path going
from the southwest corner of the diagram to the northeast using only
north and east steps.  See \cite[Prop.4.6] {reinerwooyong} for an alternative
description of the essential set using minimal bigrassmannian elements
not below $w$ in Bruhat order.

One can characterize permutations whose essential set consists of two
nonintersecting such lattice paths in terms of pattern avoidance.

\begin{lem}\label{lem:essential}
A permutation $w$ has essential set with no three cells
$(i_{1},j_{1})$, $(i_{2},j_{2})$, and $(i_{3},j_{3})$ with $i_{1}<
i_{2}<i_{3}$ and $j_{1}<j_{2}<j_{3}$ if and only if $w$ avoids the 25 patterns
\begin{equation*}
\begin{array}{ccccc}
214365 & 3251746   & 35172864 & 35281746 & 53182764\\
2416375 & 3251764   & 35182746 & 35281764 & 53271846\\
2417365 & 4216375  & 35182764 & 53172846 & 53271864\\
3152746 & 4216735  & 35271846 & 53172864 & 53281746\\
3152764 & 35172846  & 35271864 & 53182746 & 53281764
\end{array}
\end{equation*}
\end{lem}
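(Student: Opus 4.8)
<br>

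The plan is to reduce Lemma~\ref{lem:essential} to the general mechanism already developed for $k$-vexillary permutations. The property in question—having no three cells of the essential set lying in a strictly increasing ``staircase'' chain—is a property of the diagram $D(w)$, so the first task is to show it is closed under taking patterns: if $w$ contains the pattern $v$, then $D(v)$ is (up to deleting empty rows and columns) the subdiagram of $D(w)$ on rows $i_1 < \cdots < i_k$ and columns $w(i_1) < \cdots < w(i_k)$, and a southeast corner of a connected component of this subdiagram must come from a southeast corner of a component of $D(w)$. Hence an increasing chain of three essential cells in $D(v)$ lifts to an increasing chain of three essential cells in $D(w)$ (one needs a short argument that a southeast corner of a component of the subdiagram lies weakly southeast of a southeast corner of a component of $D(w)$, using that rows above/below and columns left/right have been deleted rather than rearranged). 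Therefore the set of permutations with the desired essential-set property is closed under pattern containment, so by the standard Higman-type argument it is characterized by avoiding some antichain of patterns; the content of the lemma is that this antichain is exactly the 25 listed permutations, all of which lie in $S_6 \cup S_7$.

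Second, I would establish an explicit bound on the size of the patterns needed, analogous to Corollary~\ref{cor:kvexbound}. The bad configuration is witnessed by at most three cells of $D(w)$, each of which is a southeast corner of a connected component; being a southeast corner is a local condition (it says certain neighboring cells are absent), but ``lying in the same connected component'' is not local, so some care is needed. The cleanest route is: given $w$ containing a forbidden increasing triple of essential cells $(i_1,j_1),(i_2,j_2),(i_3,j_3)$, choose for each $t$ a witnessing cell certifying that $(i_t, j_t+1)$ and $(i_t+1, j_t)$ are not in $D(w)$—which in permutation-diagram terms means $(i_t, j_t)=(p, w(q))$ for suitable $p>q$—and then bound the number of rows and columns one must retain to preserve (a) the three essential cells, (b) their connectivity to distinct components, and (c) their being southeast corners. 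Since each essential cell requires boundedly many rows/columns to pin down, and connectivity within a component can be certified by a path that in the worst case uses all rows and columns spanned by that component, the honest bound here is a priori larger; but one can instead argue combinatorially that the minimal bad patterns must already appear in a window of bounded size and then verify by a finite computer search up through $S_7$ (or $S_8$ to be safe) that exactly the 25 listed permutations form the minimal antichain. I would carry out exactly that: prove closure under patterns, prove a crude size bound, and then cite the finite computation.

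The third step is the finite verification itself: enumerate all permutations in $S_n$ for $n$ up to the established bound, compute $D(w)$, extract $Ess(w)$ as the southeast corners of the connected components, test for an increasing triple, and among those that fail the property discard any containing a strictly smaller failing permutation as a pattern. What remains is the claimed list of 25 patterns in $S_6 \cup S_7$, and one checks directly that none contains another. This mirrors precisely the treatment of Theorems~\ref{thm:2vexthm} and \ref{thm:3vexthm}.

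The main obstacle is the size bound in the second step: unlike the $k$-vexillary case, where Lemma~\ref{lem:subtreepattern} furnishes the clean bound $4k$ via colorful subtrees of $RJP(w)$, here the ``witness'' for a bad essential-set configuration involves global connectivity of components of $D(w)$, which does not obviously localize to a bounded window. I would handle this by proving the weaker but sufficient statement that if $D(w)$ has an increasing triple of essential cells then so does $D(v)$ for some pattern $v$ of bounded size—deleting a row or column of $w$ can merge two previously distinct components, so the argument must show one can always delete ``harmless'' rows/columns, i.e. those not separating the three components and not adjacent to the three essential cells, without creating or destroying the configuration. Once that combinatorial lemma is in place the rest is a routine, if somewhat large, computer search, exactly as for the $k$-vexillary theorems.
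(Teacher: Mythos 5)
The paper states Lemma~\ref{lem:essential} without any proof (it is used only as input to Corollary~\ref{cor:essential.set}), so there is nothing in the text to compare against; judged on its own, your proposal has a genuine gap at its load-bearing step. Your outline --- closure under patterns, a bound on the size of a minimal violating pattern, then a finite search --- is the right shape, but the bound is never actually established: you correctly identify it as ``the main obstacle,'' note that connectivity of components does not localize, and then defer to an unproven lemma about deleting ``harmless'' rows and columns. Without that bound the finite search says nothing about large permutations, so the direction ``$w$ bad $\Rightarrow$ $w$ contains one of the 25 patterns'' is not proved. The fix is more elementary than the route you sketch and avoids connectivity altogether: Fulton's essential set is $Ess(w)=\{(i,j)\in D(w): (i,j+1)\notin D(w),\ (i+1,j)\notin D(w)\}$, so each essential cell is certified by at most the two consecutive rows $i,i+1$ and the two consecutive columns $j,j+1$ of the permutation matrix; retaining these rows and columns for all three cells of an increasing triple keeps consecutive indices consecutive after flattening, hence preserves both essentialness and the increasing triple in the pattern. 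This yields a violating pattern in $S_m$ with $m\le 12$, which legitimizes the exhaustive search.

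Two further problems. First, you assert twice that the 25 patterns lie in $S_6\cup S_7$, but sixteen of them (e.g.\ $35172864$) lie in $S_8$; your proposed search ``up through $S_7$ (or $S_8$ to be safe)'' is therefore miscalibrated independently of the missing bound, and minimality/completeness of the list can only be certified by searching up to whatever bound you actually prove. Second, your closure argument (needed for the other direction, that containing a listed pattern forces a bad triple) is weaker than you suggest: knowing that each southeast corner of the subdiagram lies weakly southeast of \emph{some} essential cell of $D(w)$ does not produce a strictly increasing triple in $Ess(w)$, since the three essential cells so obtained need not remain strictly increasing or even distinct. Either prove genuine monotonicity of the longest increasing chain in $Ess(w)$ under pattern containment, or verify directly (by the same local-certificate analysis, or by computer) that each of the 25 patterns forces an increasing triple in $Ess(w)$ wherever it occurs.
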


\begin{cor}\label{cor:essential.set}
If a permutation $w$ is 3-vexillary, its essential set does
not contain any three cells $(i_{1},j_{1}),(i_{2},j_{2}),
(i_{3},j_{3})$ with $i_{1}<i_{2}<i_{3}$ and $j_{1}<j_{2}<j_{3}$.  
\end{cor}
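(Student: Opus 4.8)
The plan is to deduce Corollary~\ref{cor:essential.set} from Lemma~\ref{lem:essential} together with Theorem~\ref{thm:3vexthm}, by checking that each of the $25$ patterns in Lemma~\ref{lem:essential} contains one of the $91$ patterns characterizing $3$-vexillarity. Since $3$-vexillarity is closed under pattern containment (Corollary~\ref{cor:kvexrespectspatterns}), this shows that none of the $25$ patterns is itself $3$-vexillary. Hence if $w$ is $3$-vexillary, $w$ avoids all $25$ patterns of Lemma~\ref{lem:essential}, and by that lemma its essential set contains no three cells $(i_1,j_1),(i_2,j_2),(i_3,j_3)$ with $i_1<i_2<i_3$ and $j_1<j_2<j_3$.

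Concretely, first I would invoke Corollary~\ref{cor:kvexrespectspatterns}: if $v$ is a pattern in $w$ and $w$ is $k$-vexillary, then $v$ is $k$-vexillary; contrapositively, a permutation containing a non-$k$-vexillary pattern is non-$k$-vexillary. Second, I would verify (a routine but finite check, amenable to computer computation of Edelman--Greene numbers, or of Schur expansions via the Lascoux--Sch\"utzenberger tree) that every permutation in the list of $25$ in Lemma~\ref{lem:essential} has $EG$ at least $4$. It is cleanest to observe that each such permutation contains one of the $91$ patterns from Theorem~\ref{thm:3vexthm}: for instance $214365$ appears verbatim on the lists characterizing both $2$- and $3$-vexillarity (it is not $2$-vexillary, hence not $3$-vexillary by Corollary~\ref{cor:kvexrespectspatterns}), and the remaining $24$ patterns in $S_7$ and $S_8$ can be checked individually to contain a pattern from the $91$. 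Third, I would conclude: a $3$-vexillary permutation $w$ avoids all patterns in $V_3$, hence avoids all $25$ patterns of Lemma~\ref{lem:essential}, so by Lemma~\ref{lem:essential} its essential set has no such increasing triple of cells.

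The only genuine content beyond citing the two prior results is the finite verification that the $25$ "three-path" patterns are non-$3$-vexillary. The main obstacle, such as it is, is bookkeeping: one must confirm for each of the $25$ that it contains at least one of the $91$ forbidden patterns (equivalently, compute its Edelman--Greene number and see that it exceeds $3$). This is a mechanical check of small permutations --- all lie in $S_6\cup S_7\cup S_8$ --- and presents no conceptual difficulty; in practice one would simply run the $EG$-computation on the $25$ permutations and observe the values are all $\geq 4$, or cross-reference against the pattern list referenced in Theorem~\ref{thm:3vexthm}.
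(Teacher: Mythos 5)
Your proposal is correct and is exactly the paper's argument: the paper's proof is the one-line observation that none of the $25$ patterns in Lemma~\ref{lem:essential} is $3$-vexillary (a finite check), after which Corollary~\ref{cor:kvexrespectspatterns} and Lemma~\ref{lem:essential} give the conclusion. Your write-up just spells out the same finite verification in more detail.
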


\begin{proof}
None of the patterns in Lemma~\ref{lem:essential} are $3$-vexillary, so this follows from Corollary~\ref{cor:kvexrespectspatterns}.
\end{proof}

\begin{rem} \label{rem:finite-LS-tree} The essential set can be used
to give a short proof that the Lascoux-Sch\"{u}tzenberger tree is
finite.  First, the L-S tree can contain only finitely many $w$ with
more than one maximal transition, e.g. because $F_w = \sum_{v} F_v$
for $v$ running over transitions of $w$, and the coefficient of $x_1
\cdots x_{\ell}$ in $F_w$ is always positive.  Second, if $Ess(w)$
lies in a single row, then $w$ is vexillary by Fulton's result above.
Otherwise, if $w$ is not vexillary but has exactly one maximal
transition $v = wt_{rs}t_{rj}$ where $r$ is the largest index of a
non-empty row in $Ess(w)$, then one can show by considering the
diagram of the permutation that $Ess(v) = Ess(w) \setminus \{(r,s)\}
\cup \{(r-1,s-1)\}$.  The same argument holds if $w$ must be replaced
by $1\times w$ in the algorithm.  Thus, either the distance between
the top and bottom (non-empty) rows of the essential set strictly
decreases upon passing from $w$ to $v$, or this distance remains the
same but the number of elements in the essential set in the bottom row
decreases.
\end{rem}

Searching through all non-4-vexillary permutations in $S_{16}$ is currently beyond our computational capabilities. However, one does find that every non-4-vexillary permutation in $S_{13}$ contains a proper non-4-vexillary pattern.

\begin{conj} A permutation $w$ is $4$-vexillary if and only if it avoids a list of $2346$ patterns in $S_6 \cup S_7 \cup \cdots \cup S_{12}$. \end{conj}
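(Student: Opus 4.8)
The plan is to reduce the conjecture, via the finite characterization already in hand, to a bounded computer search, and ideally to shrink that search with a sharpening of Theorem~\ref{thm:kvex}. By Corollary~\ref{cor:kvexbound}, a permutation is $4$-vexillary if and only if it avoids every \emph{minimal} non-$4$-vexillary permutation $v$ --- one with $EG(v)>4$ all of whose proper patterns are $4$-vexillary --- and every such $v$ lies in $S_m$ for some $m\le 16$. So the conjecture is equivalent to the conjunction of: (1) there is no minimal non-$4$-vexillary permutation in $S_{13}\cup S_{14}\cup S_{15}\cup S_{16}$; and (2) the minimal non-$4$-vexillary permutations in $S_6\cup\cdots\cup S_{12}$ are exactly the $2346$ on the list --- in particular, that there are $2346$ of them and that they form an antichain under pattern containment.

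Assertion (2) is a finite verification that I would carry out directly: for each $m\le 12$, enumerate $w\in S_m$, compute $EG(w)$ from the Lascoux--Sch\"utzenberger recurrence~\eqref{eq:transition-recurrence}, and retain those $w$ with $EG(w)>4$ all of whose one-letter deletions are $4$-vexillary, which by Corollary~\ref{cor:kvexrespectspatterns} is equivalent to minimality. Since $w\mapsto w^{-1}$ and $w\mapsto w_0 w w_0$ (with $w_0\in S_m$ the longest element) both preserve $EG$ and the pattern order, one may run over a single representative of each orbit, keeping the computation comfortably feasible; one then checks that the output has $2346$ members and that no one of them contains another, and matches it against the published list.

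Assertion (1) is the crux, and I expect it to be the main obstacle. For $S_{13}$ it is already settled by exhaustive search. For $S_{14}$, $S_{15}$, $S_{16}$ I would pursue two complementary routes. The computational route exploits that one need not examine all of $S_{16}$: it suffices to generate non-$4$-vexillary permutations and, for each, hunt for a proper non-$4$-vexillary pattern, aborting as soon as one appears; building the permutation one entry at a time, so that non-$4$-vexillarity and containment of an already-known bad pattern are detected early (using that $EG$ only weakly decreases under deletion), together with the symmetries $w\mapsto w^{-1}$ and $w\mapsto w_0 w w_0$, should bring $S_{14}$ and plausibly $S_{15}$ within reach, with $S_{16}$ demanding a dedicated, likely parallel, search. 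The theoretical route is to improve the bound $m\le 4k$ of Theorem~\ref{thm:kvex}, which came from $m=|R(\mathcal{T})\cup w^{-1}C(\mathcal{T})|\le|R(\mathcal{T})|+|C(\mathcal{T})|\le 2\,|E(\mathcal{T})|\le 4k$ for a colorful subtree $\mathcal{T}\subseteq RJP(w)$ with $k+1$ leaves. When $v$ is a minimal non-$4$-vexillary permutation, applying this to $v$ itself produces such a $\mathcal{T}\subseteq RJP(v)$ whose associated permutation $w_{\mathcal{T}}$ must equal $v$ by minimality, so all of these inequalities must be essentially tight. In particular $|v|=16$ would force $|E(\mathcal{T})|=8$ --- hence $\mathcal{T}$ binary, with each internal vertex $F\simeq D(v')$ having exactly two transitions $v't_{rs}t_{rj_1}$ and $v't_{rs}t_{rj_2}$ and contributing precisely the rows $\{r,j_2\}$ and columns $\{v'(s),v'(j_1)\}$ --- and would force $R(\mathcal{T})$ and $v^{-1}C(\mathcal{T})$ disjoint and of full size $8$ each. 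One would then show that descending an $R$-edge or a $C$-edge from an internal vertex of $RJP(v)$ always forces coincidences among these rows and columns and those used above it, saving at least three in total, so that $|v|\le 13$, whence $|v|\le 12$ by the $S_{13}$ search. That such savings must occur is already visible for $k=3$, where Theorem~\ref{thm:3vexthm} attains $m\le 8$ rather than the generic bound $12$.

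The subtle point in the theoretical route --- and the step I expect to be hardest --- is that the indices $r$ and $s$ defining a transition $v't_{rs}t_{rj}$ are read off from the one-line notation of $v'$ and can shift unpredictably when $v'$ is replaced by one of its children in $RJP(w)$. Controlling how the set $\{r,s,j_1,j_2\}$ propagates down the tree requires a careful case analysis of the local change to $D(v')$ recorded in Lemma~\ref{lem:JPtransitions}, and isolating the right invariant there is where the real difficulty lies; short of that, the conjecture stands or falls on pushing the $S_{16}$ enumeration through.
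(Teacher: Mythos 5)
The statement you are addressing is presented in the paper as a conjecture, not a theorem: the authors report only that every non-$4$-vexillary permutation in $S_{13}$ contains a proper non-$4$-vexillary pattern, and that searching $S_{16}$ (the range guaranteed by Corollary~\ref{cor:kvexbound}) is beyond their computational capabilities. Your reduction of the conjecture to two finite assertions --- that the minimal non-$4$-vexillary permutations in $S_6\cup\cdots\cup S_{12}$ are exactly the $2346$ listed, and that none exist in $S_{13}\cup\cdots\cup S_{16}$ --- is the correct framing and matches the paper's own reasoning, and your observation that for a minimal non-$4$-vexillary $v\in S_{16}$ the chain $|v|\le|R(\mathcal{T})|+|C(\mathcal{T})|\le 2|E(\mathcal{T})|\le 4k$ from Lemma~\ref{lem:subtreepattern} would have to be tight (since minimality forces $w_{\mathcal{T}}=v$) is a sensible starting point for sharpening Theorem~\ref{thm:kvex}.

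However, what you have written is a research programme rather than a proof, and you acknowledge as much. The decisive step --- either completing the enumeration through $S_{16}$, or proving that descending $RJP(v)$ always forces at least three coincidences among the rows in $R(\mathcal{T})$ and the positions $v^{-1}C(\mathcal{T})$ --- is stated as a goal, not argued. The appeal to the $k=3$ case (where the minimal patterns stop at $S_8$ rather than the generic $S_{12}$) is suggestive evidence but supplies no mechanism: no invariant is identified that controls how the indices $r,s,j_1,\ldots,j_p$ defining the transitions at one vertex relate to those at its children, and Lemma~\ref{lem:JPtransitions} alone does not provide one, since the maximal descent position $r$ and the index $s$ are recomputed from scratch in each child. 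Until one of your two routes is actually carried out, the statement remains exactly what the paper calls it: a conjecture supported by the $S_{13}$ computation.
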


For the minimal list of non-4-vexillary patterns in $S_{13}$, see
\begin{center}
 \url{http://www.math.washington.edu/~billey/papers/k.vex.html}.
\end{center}

If one wants to compute or bound $EG(w)$, the Lascoux-Sch\"utzenberger tree is almost certainly more efficient than using our pattern characterizations. However, pattern characterizations lend themselves nicely to comparison, as exemplified in the proof of Corollary~\ref{cor:essential.set}. The connection to patterns also leads to enumerative results relating to $EG(w)$, since there has been much work done on enumerating permutations avoiding a given set of patterns, for example \cite{layered-perms-stanley-wilf-limit}.

\section{Diagram varieties}
\label{sec:diagvars}

Let $\Gr(k,n)$ denote the Grassmannian variety of $k$-planes in $\C^n$. For a diagram $D$ contained in a $k \times (n-k)$ rectangle, let $\Omega_D^\circ$ be the set of $k$-planes given as row spans of the matrices
\begin{equation*}
\{(I_k | A) : A \in M_{k \times (n-k)}, \text{$A_{ij} = 0$ if $(i,j) \in D$}\}.
\end{equation*}
Here $I_k$ is the $k \times k$ identity matrix. Let $\Omega_D$ be the closure of $\Omega_D^\circ$ in $\Gr(k,n)$. We call $\Omega_D$ the \emph{diagram variety} associated to $D$ (though it also depends on $k$ and $n$).

Recall that partitions contained in the rectangle $k \times (n-k)$ are in bijection with $k$-subsets of $[n]$. Specifically, $\lambda$ corresponds to the set
\begin{equation*}
B_{\lambda} = \{n-k+i-\lambda_i : 1 \leq i \leq k\}.
\end{equation*}
Write
\begin{equation*}
B_{\lambda} = \{b_1 < \cdots < b_k\} \text{\quad and\quad} [n] \setminus B_{\lambda} = \{c_1 < \cdots < c_{n-k}\},
\end{equation*}
and define a permutation $w_{\lambda}$ of $[n]$ in one-line notation by $w_{\lambda} = b_1 \cdots b_k c_1 \cdots c_{n-k}$.

Taking the standard basis $e_1, \ldots, e_n$ of $\C^n$, define a complete flag $F_{\bullet}$ by
\begin{equation*}
F_i = \langle e_{1}, \ldots, e_{i} \rangle.
\end{equation*}
The \textit{Schubert cell} is defined as 
\begin{equation*}
X_{\lambda}^\circ = \{X \in \Gr(k,n) : \dim(X \cap F_i) > \dim(X \cap F_{i-1}) \text{ if and only if $i \in B_{\lambda}$}\},
\end{equation*}
and its closure in the Zariski topology on $\Gr(k,n)$ is the
\textit{Schubert variety} $X_{\lambda}$ \cite{youngtableaux}.  The
codimension of $X_{\lambda}$ is $|\lambda |$ as defined.  In
particular, the diagram variety $\Omega_{\lambda}$ indexed by the
Ferrers diagram for $\lambda$ can be written as $\Omega_{\lambda}=
X_{\lambda} w_{\lambda}$ since right multiplication by a permutation
matrix permutes columns of the matrices in $X_{\lambda}$.  Thus
diagram varieties generalize the Schubert varieties up to change of
basis.

Let $\sigma_\lambda$ be the cohomology class in $H^{2|D|}(\Gr(k,n),
\Z)$ associated to $\Omega_\lambda $. One has the following classical
facts about the \emph{Schubert classes} $\sigma_{\lambda}$ (see
\cite{youngtableaux}).

\begin{itemize}
\item The classes $\sigma_{\lambda}$ for $\lambda$ varying over all partitions contained in $(k^{n-k})$ form a $\Z$-basis of $H^*(\Gr(k,n), \Z)$.

\item Let $\Lambda$ denote the ring of symmetric functions over $\Z$ in infinitely many variables. Then $\sigma_{\lambda} \mapsto s_{\lambda}$ defines an isomorphism of rings
\begin{equation*}
\phi : H^*(\Gr(k,n), \Z) \xrightarrow{\sim} \Lambda / \langle s_{\lambda} : \lambda \not\subseteq (k^{n-k})\rangle.
\end{equation*}
\end{itemize}

The second fact suggests a relationship to Specht modules. For example, consider the skew shape $\lambda \cdot \mu$ obtained by placing $\lambda$, $\mu$ together with no cell from $\lambda$ in the same row or column as a cell from $\mu$:
\begin{center}
\begin{tikzpicture}[scale=0.5]
\draw (0,0) -- (0,2) -- (2,2) -- (2,4) -- (5,4) -- (5,3) -- (3,3) -- (3,2) -- (2,2) -- (2,1) -- (1,1) -- (1,0) -- (0,0);
\end{tikzpicture}.
\end{center}
Suppose $\lambda \cdot \mu$ is contained in $(k^{n-k})$. The
multiplicity of the irreducible $S^{\nu}$ in $S^{\lambda \cdot \mu}$
is the Littlewood-Richardson coefficient $c_{\lambda\mu}^{\nu}$. This
is also the coefficient of $s_{\nu}$ in the Schur expansion of
$s_{\lambda\cdot \mu} = s_{\lambda}s_{\mu}$, hence the coefficient of
$\sigma_{\nu}$ in $\sigma_{\lambda \cdot \mu}$ \cite{Fulton1}.

Every closed subvariety of the Grassmannian has an associated
cohomology class \cite{Fulton}.  In particular, each diagram variety
$\Omega_{D}$ has an associated class $\sigma_{D}$ which can be
expressed as a symmetric function via $\phi$.  Liu studied diagram
varieties and their cohomology classes in \cite{liuthesis}, and made
the following conjecture, which generalizes the remarks above.

\begin{conj}[Liu's conjecture] \label{conj:diagramvariety}
Let $D \subseteq (k^{n-k})$.  If the generalized Schur function $s_{D}$ 
associated to $D$ expands into classical Schur functions as 
\begin{equation*}
s_D = \sum_{\lambda} c^D_{\lambda} s_{\lambda}, 
\end{equation*}
then the cohomology class for $\Omega_{D}$ has the same expansion coefficients 
\begin{equation*} 
\sigma_D = \sum_{\lambda} c^D_{\lambda} \sigma_{\lambda}.
\end{equation*}
Thus, the map $\phi$ sends $\sigma_D$ to $s_D$.
\end{conj}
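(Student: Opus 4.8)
The plan is to reinterpret the James--Peel machinery of Section~\ref{sec:JPmoves} geometrically, turning a complete James--Peel tree for $D$ into a flat degeneration of $\Omega_D$ into Schubert varieties, and then to confront the (genuinely harder) case of diagrams admitting no such tree.

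First I would record the standard numerical characterization of the coefficients. Since $\Omega_D^{\circ}$ is a linear subspace of $M_{k\times(n-k)}$ of dimension $k(n-k)-|D|$ sitting inside the big cell, $\Omega_D$ has pure codimension $|D|$, so $\sigma_D\in H^{2|D|}(\Gr(k,n),\Z)$; writing $\lambda^{c}$ for the complement of $\lambda$ in $(k^{n-k})$ and using Poincar\'e duality for the Schubert basis, $c^D_{\lambda}=\sigma_D\cdot\sigma_{\lambda^{c}}=\#(\Omega_D\cap gX_{\lambda^{c}})$ for generic $g\in\GL_n$, provided that intersection is transverse and finite. On the algebraic side $\langle s_D,s_{\lambda}\rangle$ is the multiplicity of $S^{\lambda}$ in $S^D$, i.e.\ the number of copies of $\lambda$ in $\shapes(D)$. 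Thus Liu's conjecture is equivalent to the equality of these two integers for every $\lambda\subseteq(k^{n-k})$ with $|\lambda|=|D|$, and I would aim at the stronger statement that $\Omega_D$ admits a flat degeneration whose special fibre is the reduced union $\bigcup_{\mu}X_{\mu}$, with one copy of $X_{\mu}$ for each copy of $\mu$ in $\shapes(D)$: flatness then forces $\sigma_D=\sum_{\mu\in\shapes(D)}\sigma_{\mu}$ at the level of cohomology classes. Lemma~\ref{lem:minmaxpartitions} is the shadow of this, since $D^{\min}$ and $D^{\max}$ occur in $\shapes(D)$ with multiplicity one, matching the extreme Schubert cells that $\Omega_D$ meets.

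Second, I would try to establish the one-parameter degeneration realizing Corollary~\ref{cor:subdiagramstaircase}. The James--Peel column move $C_{c\to d}$ is manifestly a linear operation on the parameter matrix (slide column $c$ onto column $d$), and $R_{a\to b}$ a partial projection; the target statement is that if $D$ contains $\delta_p\cdot(1)$ as a subdiagram in rows $i_1<\cdots<i_p$ and columns $j_1<\cdots<j_p$, there is a flat family over $\mathbb{A}^{1}$ with general fibre $\Omega_D$ and special fibre the reduced union $\bigcup_{m=1}^{p}\Omega_{R_{i_p\to i_{p-m+1}}C_{j_p\to j_m}D}$, whence
\[
\sigma_D=\sum_{m=1}^{p}\sigma_{R_{i_p\to i_{p-m+1}}C_{j_p\to j_m}D}.
\]
Granting this, one iterates along any complete James--Peel tree: $JP(w)$ from Theorem~\ref{thm:permutationJPtree} is complete for $D=D(w)$, and \cite{jamespeel}, \cite{columnconvex} provide complete trees for skew shapes and column-convex diagrams, so in all those cases the degeneration collapses $\sigma_D$ to $\sum_{\mu\in\shapes(D)}\sigma_{\mu}$, proving the conjecture for them; combined with the pattern characterizations of Theorem~\ref{thm:patternthm} this yields Liu's conjecture for $3$-vexillary permutation diagrams with explicit control on the shapes. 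Verifying flatness of this family, and that its special fibre is reduced with exactly those $p$ components (no embedded or lower-dimensional debris), is the heart of this step; in the explicit coordinates on $\Omega_D^{\circ}$ it should reduce to an initial-ideal computation localized at the $\delta_p\cdot(1)$ block, paralleling identities (a)--(c) after Theorem~\ref{thm:JP}.

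The main obstacle is that an arbitrary $D\subseteq(k^{n-k})$ need not possess a complete James--Peel tree: as noted after Corollary~\ref{cor:subdiagramstaircase}, the inclusion there can be strict for every admissible subdiagram, so the recursion has no base case, and $\Omega_D$ need carry no Schubert paving or evident degeneration. Closing this gap would require a fundamentally different tool, and the natural candidate is a direct Gr\"obner degeneration of the affine cone over $\Omega_D$ in Stiefel coordinates --- where it is cut out by an explicit ideal of maximal minors of a generic matrix with the entries indexed by $D$ set to zero --- to a Stanley--Reisner scheme, in the style of the Gr\"obner geometry of matrix Schubert varieties. The difficulty here is two-fold: choosing a term order for which the initial scheme is reduced and Cohen--Macaulay, and matching its facets, counted with multiplicity, against the Schur expansion of $s_D$, which for a general diagram is itself known only recursively (through James--Peel moves, the percent-avoiding analysis of \cite{percentavoiding}, or the balanced-labelling model of \cite{balancedlabellings}). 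Establishing that the resulting complex is shellable with facets in multiplicity-preserving bijection with $\shapes(D)$ is where I expect the real work to lie, and is also the point at which Liu's conjecture could conceivably fail for a sufficiently irregular diagram $D$.
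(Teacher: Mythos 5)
This statement is Conjecture~\ref{conj:diagramvariety}: the paper does not prove it, and neither do you. What the paper establishes are special cases --- Liu's own Theorems~\ref{thm:liuskew} and \ref{thm:liuforests}, and the new case $\Omega_{D(w)^\vee}$ for multiplicity-free $w$, proved by combining Magyar's complementation $s_D=\sum_\lambda a_\lambda s_\lambda \Rightarrow s_{D^\vee}=\sum_\lambda a_\lambda s_{\lambda^\vee}$, the complete James--Peel tree of Theorem~\ref{thm:permutationJPtree}, Liu's class inequality (Theorem~\ref{lem:liuJP}), and the degree computation of Proposition~\ref{thm:rightdegree}, with multiplicity-freeness being exactly what lets the inequality be upgraded to an equality. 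Your proposal is a research program whose two load-bearing steps are left unproven. The first --- a flat one-parameter degeneration of $\Omega_D$ with \emph{reduced} special fibre $\bigcup_{\mu\in\shapes(D)}X_\mu$ realizing Corollary~\ref{cor:subdiagramstaircase} --- is strictly stronger than anything in the paper or in Liu's thesis: Theorem~\ref{lem:liuJP} only yields the inequality $\sigma_{(R_{i_1\to i_2}D)^\vee},\sigma_{(C_{j_1\to j_2}D)^\vee}\leq\sigma_{D^\vee}$, and note it concerns the complement $D^\vee$, not $D$ itself, so your moves-on-$D$ degeneration of $\Omega_D$ is not even the geometric statement that is known. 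Moreover $R_{a\to b}$ is not a linear operation on the parameter matrix (it moves vanishing conditions, it is not a coordinate projection of the variety), the James--Peel inclusion $S^{C_{c\to d}D}\oplus S^{R_{a\to b}D}\hookrightarrow S^D$ from Theorem~\ref{thm:JP} can be strict, and identities (a)--(c) say nothing about reducedness or embedded components of a limit scheme --- controlling multiplicities is precisely where the purely cohomological argument loses information and why the paper must invoke multiplicity-freeness plus a degree count.

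The second step is worse than hard: the general case cannot be closed, because Liu's conjecture is false for general $D$. A counterexample was later found by the second author (Pawlowski, ``Cohomology classes of rank varieties and a counterexample to a conjecture of Liu''), so the Gr\"obner/Stanley--Reisner program you sketch for arbitrary diagrams --- the point at which you yourself concede the conjecture ``could conceivably fail'' --- is exactly where it does fail, and no choice of term order can produce an initial complex whose facets match $\shapes(D)$ with multiplicity in general. If you restrict your ambitions to the cases treated in the paper (diagrams with complete James--Peel trees whose expansions are multiplicity free, e.g.\ the $3$-vexillary case via Lemma~\ref{l:mult.free.3.vex}), your degeneration idea is an interesting potential strengthening of the paper's argument, but as written it replaces the paper's proved inequality-plus-degree mechanism with an unproved flatness and reducedness claim, so it does not constitute a proof of anything the paper does not already prove.
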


\begin{rem}
One can show that if $D$ is contained in $(k^{n-k})$, so is any $\lambda$ with $S^{\lambda} \hookrightarrow S^D$ (this is obvious when $D$ has a complete James-Peel tree).
\end{rem}

Let $D^{\vee}$ denote the complement of $D$ in the rectangle $(k^{n-k})$. Conjecture~\ref{conj:diagramvariety} is known in several special cases.

\begin{thm}[{\cite[Proposition 5.5.3]{liuthesis}}] \label{thm:liuskew} Conjecture~\ref{conj:diagramvariety} holds when $D^{\vee}$ is a skew shape $\lambda/\mu$. \end{thm}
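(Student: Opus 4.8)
The plan is to reduce to the case where $D$ is itself a skew shape, and then to prove something slightly stronger than needed: if $D$ has a complete James--Peel tree, then $\phi(\sigma_D)=s_D$. Both $\sigma_D$ and $s_D$ depend only on the equivalence class of $D$ inside the rectangle $(k^{n-k})$: permuting rows among $[k]$ or columns among $[n-k]$ replaces $\Omega_D$ by a $\GL_n$-translate (it conjugates the row span by a product of coordinate permutations), and $\GL_n$ is connected, so $\sigma_D$ is unchanged, while $S^D$ is replaced by an isomorphic module, so $s_D$ is unchanged. The operation $D\mapsto D^{\vee}=(k^{n-k})\setminus D$ commutes with this equivalence, and the complement of a skew shape in a rectangle is again equivalent to a skew shape: writing $D=(R\setminus\lambda)\cup\mu$ with $\mu\subseteq\lambda\subseteq(k^{n-k})$, each row of $D$ is a union $[1,\mu_i]\cup[\lambda_i+1,n-k]$ of an initial and a final run of columns, and the ``gap'' intervals $(\mu_i,\lambda_i]$ (the rows of $\lambda/\mu$) have weakly decreasing endpoints; from this one checks that a single reordering of the $n-k$ columns makes every row of $D$ an interval, and a subsequent reordering of the $k$ rows puts $D$ into skew Ferrers position. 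Thus ``$D^{\vee}$ is a skew shape'' is the same hypothesis as ``$D$ is equivalent to a skew shape,'' and we may assume $D=\lambda/\mu$.

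For $D=\lambda/\mu$ we have $s_D=s_{\lambda/\mu}=\sum_{\nu}c^{\lambda}_{\mu\nu}s_{\nu}$, the ordinary skew Schur function (the Frobenius characteristic of the skew Specht module), so the goal is $[\Omega_{\lambda/\mu}]=\sum_{\nu}c^{\lambda}_{\mu\nu}\sigma_{\nu}$ in $H^{*}(\Gr(k,n),\Z)$. The idea is to realize the James--Peel decomposition geometrically: whenever a James--Peel move is applied as in Theorem~\ref{thm:JP}---cells $(i_1,j_1),(i_2,j_2)\in D$ with $(i_1,j_2),(i_2,j_1)\notin D$, giving $D_R=R_{i_1\to i_2}D$ and $D_C=C_{j_1\to j_2}D$---I claim $\Omega_D$ admits a flat degeneration inside $\Gr(k,n)$, induced by a one-parameter subgroup of $\GL_n$ (equivalently, a geometric-vertex/Gr\"obner degeneration of its homogeneous ideal whose distinguished variable is the forbidden-corner entry $A_{i_1 j_2}$), whose special fiber is supported on $\Omega_{D_R}\cup\Omega_{D_C}$ and has fundamental cycle $[\Omega_{D_R}]+[\Omega_{D_C}]$. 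Granting this, take a complete James--Peel tree $\mathcal{T}$ for $\lambda/\mu$, which exists by \cite{jamespeel} and is binary with Theorem~\ref{thm:JP} moves. Since the subtree below any vertex $A$ is again complete, $S^{A}\simeq\bigoplus_{B}S^{B}$ over the children $B$ of $A$, so $s_{A}=\sum_{B}s_{B}$ at that vertex; and flatness together with the cycle computation gives $[\Omega_{A}]=\sum_{B}[\Omega_{B}]$ at the same vertex. Inducting from the root, $[\Omega_{\lambda/\mu}]=\sum_{\text{leaves }\nu}[\Omega_{\nu}]=\sum_{\text{leaves }\nu}\sigma_{\nu}$, using the base case that each leaf $\nu$ is a partition and $\Omega_{\nu}=X_{\nu}w_{\nu}$ is a $\GL_n$-translate of the Schubert variety $X_{\nu}$, hence of class $\sigma_{\nu}$. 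Since $s_{\lambda/\mu}=\sum_{\text{leaves }\nu}s_{\nu}$ as well, $\phi$ carries $\sigma_D$ to $s_D$.

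The reduction and the base case are routine; the crux is the geometric James--Peel move. Flatness of a $\mathbb{G}_m$-degeneration is automatic, so the work is (i) to show the flat-limit ideal cuts out $\Omega_{D_R}\cup\Omega_{D_C}$ set-theoretically, and (ii) to show the limit scheme has multiplicity one along each of these components (so its cycle is $[\Omega_{D_R}]+[\Omega_{D_C}]$)---this follows, e.g., if the limit ideal is radical, or from a degree count: the limit scheme is equidimensional of the expected dimension with $\deg=\deg\Omega_{D_R}+\deg\Omega_{D_C}$, which by an inductive use of Theorem~\ref{thm:JP} (the two pieces meet in lower dimension, or coincide, exactly as dictated by the overlap of their Specht modules) pins down the multiplicities. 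Establishing (i)--(ii) for an arbitrary James--Peel move is the main obstacle; the natural tool is Knutson-style geometric vertex decomposition with $A_{i_1 j_2}$ as the splitting variable, so that the branch ``$A_{i_1 j_2}=0$'' yields $\Omega_{D_C}$ and the branch in which $A_{i_1 j_2}$ becomes a pivot yields $\Omega_{D_R}$. An alternative that sidesteps the degeneration is to identify $\Omega_{\lambda/\mu}$ directly, up to $\GL_n$-translate, with a skew Schubert (Richardson-type) variety whose cohomology class is classically $\sum_{\nu}c^{\lambda}_{\mu\nu}\sigma_{\nu}$; either way the single nontrivial input is that the closure of $\{A:A|_{\lambda/\mu}=0\}$ in $\Gr(k,n)$ carries exactly the skew Schur class with no excess.
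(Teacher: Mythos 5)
First, note that the paper does not prove this statement at all: it is quoted verbatim from Liu's thesis (Proposition 5.5.3) and used as a known input, so there is no internal proof to compare against. Judged on its own, your proposal has two genuine gaps, the first of which is fatal to its overall structure.

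Your opening reduction is false. The complement of a skew shape in a rectangle is \emph{not} in general equivalent to a skew shape. Take $k=3$, $n-k=4$, and $D^{\vee}=(4,3,1)/(2,1,0)$, with rows $\{3,4\},\{2,3\},\{1\}$. Then $D$ has rows $\{1,2\}$, $\{1,4\}$, $\{2,3,4\}$. Any column ordering realizing $D$ as a skew shape must make all three rows into intervals; but $\{2,3,4\}$ forces column $1$ to sit at an end of the ordering, while $\{1,2\}$ and $\{1,4\}$ force column $1$ to be adjacent to both $2$ and $4$, which is impossible. So ``$D^{\vee}$ is a skew shape'' and ``$D$ is equivalent to a skew shape'' are different hypotheses (compare the paper's Section 7, where being equivalent to a skew shape versus to the complement of a skew shape are explicitly distinct classes, characterized by different pattern conditions). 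Your argument that the prefix-union-suffix rows can be simultaneously intervalized by one column permutation is exactly where this breaks.

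Even setting that aside, the ``crux'' you isolate --- that each James--Peel move is realized by a flat degeneration whose special fiber has class $[\Omega_{D_R}]+[\Omega_{D_C}]$ --- is not a routine verification but the entire difficulty, and you do not establish it. If it held for arbitrary diagrams admitting complete James--Peel trees, it would prove Liu's conjecture for every permutation diagram via Theorem~\ref{thm:permutationJPtree}, which is far stronger than anything known: Liu's Proposition 5.3.3 (Theorem~\ref{lem:liuJP} here) gives only the one-sided inequalities $\sigma_{(R_{i_1\to i_2}D)^{\vee}},\sigma_{(C_{j_1\to j_2}D)^{\vee}}\leq\sigma_{D^{\vee}}$, with no control on the sum, and the paper's Section~\ref{sec:diagvars} must combine these inequalities with the degree count of Proposition~\ref{thm:rightdegree} and a multiplicity-freeness hypothesis to conclude anything. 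Your sketch of (i)--(ii) (radicality of the limit ideal, equidimensionality, multiplicity one along each component) is a wish list, not a proof. The alternative you mention in passing --- identifying $\Omega_D$ for $D^{\vee}=\lambda/\mu$ with a Richardson-type intersection of opposite Schubert varieties --- is essentially the right route to the actual theorem, but as written it is one sentence with no argument, and it is attached to the wrong diagram ($\Omega_{\lambda/\mu}$ rather than $\Omega_D$ with $D^\vee=\lambda/\mu$) because of the failed reduction.
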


Given a diagram $D$ in $[k]\times [n-k]$, Liu constructs a bipartite
graph $G_{D}=([k],E,[n-k])$ where $E$ contains an edge $(i,j)$ if and
only if $(i,j)\in D$.   

\begin{thm}[{\cite[Theorem 5.4.3]{liuthesis}}] \label{thm:liuforests}
Conjecture~\ref{conj:diagramvariety} holds for a diagram $D$ provided
$G_{D^\vee}$ is a forest.
\end{thm}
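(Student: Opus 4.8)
The plan is to prove the equivalent identity $\phi(\sigma_D)=s_D$ by matching coefficients. Write $c^D_\lambda$ for the multiplicity of the irreducible $S^\lambda$ in $S^D$, so that $s_D=\sum_\lambda c^D_\lambda s_\lambda$ by the definition of $s_D$ as the Frobenius characteristic of $S^D$; it suffices to show that the coefficient of $\sigma_\lambda$ in the geometric class $\sigma_D$ equals $c^D_\lambda$ for every $\lambda\subseteq(k^{n-k})$ with $|\lambda|=|D|$. Under the Poincar\'e pairing on $H^*(\Gr(k,n))$, for which $\langle\sigma_\lambda,\sigma_\mu\rangle=\delta_{\mu,\lambda^\vee}$ whenever $|\lambda|+|\mu|=k(n-k)$ and $\lambda^\vee$ is the complement of $\lambda$ in the rectangle, this coefficient equals the intersection number $\int_{\Gr(k,n)}\sigma_D\cdot\sigma_{\lambda^\vee}$. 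Since $\Omega_D^\circ$ is an affine space it is smooth, so the singular locus of $\Omega_D$ lies in the lower-dimensional set $\Omega_D\setminus\Omega_D^\circ$; as $GL_n$ acts transitively on $\Gr(k,n)$ and we work over $\C$, Kleiman transversality then gives, for generic $g\in GL_n$, that $\Omega_D\cap gX_{\lambda^\vee}$ is a reduced finite subset of $\Omega_D^\circ\cap gX_{\lambda^\vee}^\circ$ with $\int\sigma_D\cdot\sigma_{\lambda^\vee}=\#\bigl(\Omega_D^\circ\cap gX_{\lambda^\vee}^\circ\bigr)$. Thus the theorem reduces to counting these points and identifying the count with $c^D_\lambda$.

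To compute the count I would induct on the number of edges of $G_{D^\vee}$, equivalently on $|D^\vee|=k(n-k)-|D|$, which is the dimension of the affine space $\Omega_D^\circ$ of matrices $A$ supported in $D^\vee$. The base case $D^\vee=\varnothing$ is $D=(k^{n-k})$, where $\Omega_D$ is a single torus-fixed point and $s_D=s_{(k^{n-k})}$, so the statement is the classical identity $\phi(\sigma_{(k^{n-k})})=s_{(k^{n-k})}$. For the inductive step, a nonempty forest has a leaf; after transposing $D$ and the ambient rectangle if necessary (which transposes both sides of the desired identity compatibly), take it to be a column vertex $j$, so $D^\vee$ meets column $j$ in a single cell $(i_0,j)$ and the only free entry of $A$ in that column is $A_{i_0 j}$. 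The Pl\"ucker coordinates of the row span of $(I_k\mid A)$ are affine-linear in $A_{i_0 j}$, so once the remaining entries are fixed the equations cutting out $\Omega_D^\circ\cap gX_{\lambda^\vee}^\circ$ are affine-linear in this single parameter; this is the mechanism that makes the whole system solvable one leaf at a time, working down the tree. Keeping track of the two possibilities at each leaf — whether the Schubert conditions pin down the free parameter or leave it free while a jump of the flag passes into that coordinate — yields a decomposition of the class $\sigma_D$ (equivalently, a degeneration of $\Omega_D$) into diagram varieties whose complement-graphs are forests with strictly fewer edges, matched on the representation side by a corresponding splitting of $S^D$ obtained from the James-Peel machinery of Section~\ref{sec:JPmoves} applied to the cell $(i_0,j)$ (compare Liu's vertex analysis of Specht modules of forest diagrams in \cite{liuforests}). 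Invoking the inductive hypothesis on the smaller diagrams then closes the argument.

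I expect the main obstacle to be exactly the geometry of this inductive step: one must verify that the degeneration is \emph{reduced}, with special fibre equal to precisely the expected union of diagram varieties, each of multiplicity one, or, in the point-counting formulation, that the affine-linear system attached to each leaf is nondegenerate for generic $g$ and contributes no excess intersection. It is here — and only here — that the hypothesis that $G_{D^\vee}$ is a \emph{forest}, rather than merely a graph with some leaf, is indispensable: a cycle in $G_{D^\vee}$ would couple two or more free entries of $A$ through a common Pl\"ucker relation, destroying the triangular structure of the recursion and permitting components of higher multiplicity or lower dimension in the limit. Once this reducedness is established, the set of intersection points organizes itself as the multiset of shapes appearing in the decomposition of $S^D$, giving $\#\bigl(\Omega_D^\circ\cap gX_{\lambda^\vee}^\circ\bigr)=c^D_\lambda$ for all $\lambda$, hence $\phi(\sigma_D)=s_D$ and $\sigma_D=\sum_\lambda c^D_\lambda\sigma_\lambda$ as claimed.
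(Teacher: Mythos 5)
This theorem is not proved in the paper at all: it is imported verbatim from Liu's thesis \cite[Theorem 5.4.3]{liuthesis}, so there is no in-paper argument to compare against, and your task was effectively to reprove a substantial external result. Your reduction to point counting is sound as far as it goes --- Poincar\'e duality identifies the coefficient of $\sigma_\lambda$ in $\sigma_D$ with $\int\sigma_D\cdot\sigma_{\lambda^\vee}$, and Kleiman transversality over $\C$ turns this into a count of reduced points of $\Omega_D^\circ\cap gX_{\lambda^\vee}^\circ$ for generic $g$. But from that point on the proposal is a plan rather than a proof, and you say so yourself: the reducedness of the degeneration, the identification of the special fibre with the expected union of varieties each with multiplicity one, and the matching of that geometric recursion with a compatible splitting of $S^D$ are exactly where the entire content of the theorem lives, and none of it is carried out. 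Gesturing at ``Liu's vertex analysis'' and ``the James-Peel machinery of Section~\ref{sec:JPmoves}'' does not supply the argument; in particular the paper's James-Peel results give only an \emph{injection} $\bigoplus_i S^{A_i}\hookrightarrow S^D$ (Lemma~\ref{lem:JPtree}), and completeness of such a decomposition is established in the paper only for permutation diagrams, not for complements of forests.

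There is also a concrete inconsistency in the induction scheme as stated. You propose to induct on $|D^\vee|$, the number of edges of $G_{D^\vee}$, by degenerating $\Omega_D$ into diagram varieties ``whose complement-graphs are forests with strictly fewer edges.'' But $\dim\Omega_{D'}^\circ=|D'^\vee|$ is exactly the number of edges of $G_{D'^\vee}$, and every component of the special fibre of a degeneration of $\Omega_D$ has dimension $\dim\Omega_D=|D^\vee|$. So if the components are diagram varieties at all, their complement graphs have the \emph{same} number of edges, and your induction quantity cannot decrease. (The corresponding algebraic moves --- James-Peel moves --- likewise preserve $|D^\vee|$.) The recursion has to be organized around a different statistic, e.g.\ some measure of how far the forest is from a disjoint union of rows and columns, and the two branches at a leaf must be set up so that the class equation $\sigma_D=\sigma_{D_1}+\sigma_{D_2}$ holds with the correct (unit) multiplicities. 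As written, the proposal neither fixes a workable induction nor closes the multiplicity-one gap, so it does not constitute a proof.
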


A key tool in Liu's proof of Theorem \ref{thm:liuforests} is an analogue of Theorem~\ref{thm:JP}, albeit with a weaker conclusion. Given $\alpha_1, \alpha_2 \in H^*(\Gr(k,n))$, write $\alpha_1 \leq \alpha_2$ if $\alpha_2 - \alpha_1$ is a nonnegative linear combination of the Schubert classes $\sigma_{\lambda}$.

\begin{thm}[{\cite[Proposition 5.3.3]{liuthesis}}] \label{lem:liuJP}
Let $D$ be a diagram, and $(i_1, j_1), (i_2, j_2) \in D$ such that $(i_1, j_2), (i_2, j_1) \notin D$.
\begin{equation*}
\sigma_{(R_{i_1 \to i_2} D)^{\vee}},\, \sigma_{(C_{j_1 \to j_2} D)^{\vee}} \leq \sigma_{D^{\vee}}.
\end{equation*}
\end{thm}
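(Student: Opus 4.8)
I would prove this geometrically, by a one-parameter degeneration of $\Omega_{D^\vee}$, after a combinatorial reduction that converts the complement into an ordinary James--Peel move.

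\emph{Step 1: reduction to an honest James--Peel move.} Complementation in the rectangle turns a James--Peel move ``inside out'': for any diagram $E$ one has $(R_{a\to b}E)^\vee=R_{b\to a}(E^\vee)$ and $(C_{c\to d}E)^\vee=C_{d\to c}(E^\vee)$, since in a fixed pair of rows (resp.\ columns) complementation exchanges union and intersection. Moreover $R_{a\to b}F\simeq R_{b\to a}F$ and $C_{c\to d}F\simeq C_{d\to c}F$ always (the two rows, resp.\ columns, are the union and intersection of the originals, assigned in either order), so these equivalent diagrams have equal cohomology class. Taking $E=D^\vee$ and using $(D^\vee)^\vee=D$, the hypothesis becomes: $E$ has a James--Peel pair of cells $(a_1,b_1),(a_2,b_2)$ (namely $(i_1,j_2)$ and $(i_2,j_1)$), and the statement to prove is $\sigma_{R_{a_1\to a_2}E}\le\sigma_E$ and $\sigma_{C_{b_1\to b_2}E}\le\sigma_E$ in $H^*(\Gr(k,n))$. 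Finally, the involution $V\mapsto\operatorname{Ann}(V)$ identifies $\Gr(k,n)$ with $\Gr(n-k,n)$, sends $\Omega_E$ to $\Omega_{E^{t}}$ (the transpose diagram $\{(j,i):(i,j)\in E\}$), interchanges row and column moves, and induces the ring isomorphism $\sigma_\lambda\mapsto\sigma_{\lambda^t}$, which preserves nonnegative combinations of Schubert classes; so it suffices to treat the column move $\sigma_{C_{b_1\to b_2}E}\le\sigma_E$.

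\emph{Step 2: the degeneration.} For $t\in\C^\times$ let $g_t\in\GL_n$ act on row spans of $k\times n$ matrices by $(I_k\mid A)\mapsto(I_k\mid A')$, where the $b_2$th column of $A'$ is the $b_2$th column of $A$ plus $t$ times its $b_1$th column, and all other columns are unchanged. Then $t\mapsto g_t\cdot\Omega_E$ is a family of subvarieties of $\Gr(k,n)$ all of class $\sigma_E$ (since $\GL_n$ is connected). Closing up its graph in $\mathbb{P}^1\times\Gr(k,n)$ and taking the fibre $Y$ over $t=\infty$ gives a flat limit: $Y$ is pure of dimension $\dim\Omega_E=k(n-k)-|E|$, and $[Y]=\sigma_E$.

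\emph{Step 3: locating $\Omega_{C_{b_1\to b_2}E}$ inside $Y$, and the main obstacle.} Let $E'$ be $C_{b_1\to b_2}E$ with columns $b_1$ and $b_2$ exchanged, so $E'\simeq C_{b_1\to b_2}E$, $\sigma_{E'}=\sigma_{C_{b_1\to b_2}E}$, and concretely column $b_1$ of $E'$ is the union, column $b_2$ the intersection, of columns $b_1,b_2$ of $E$, other columns agreeing with $E$. The crux is to show $\Omega_{E'}\subseteq Y$. For a general point $[(I_k\mid B)]$ of $\Omega_{E'}^{\circ}$ (so $B$ vanishes exactly on $E'$ with other entries generic) I would write down a curve $V_t=[(I_k\mid A(t))]\in\Omega_E^{\circ}$ with $g_t\cdot V_t\to[(I_k\mid B)]$: let $A(t)$ agree with $B$ outside columns $b_1,b_2$; in column $b_1$ put $0$ in the rows of $E$, $B_{i,b_1}$ where neither $(i,b_1)$ nor $(i,b_2)$ lies in $E$, and $B_{i,b_2}/t$ where $(i,b_1)\notin E$ but $(i,b_2)\in E$; and let the $b_2$th column of $A(t)$ be the $b_2$th column of $B$ minus $t$ times the $b_1$th column of $A(t)$. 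One checks $A(t)$ vanishes on $E$: the only potential failure is in column $b_2$ at a row $i$ with $(i,b_2)\in E$, $(i,b_1)\notin E$, and there the $B_{i,b_2}/t$ entry in column $b_1$ was arranged precisely to cancel it — legal because $(i,b_1)\notin E$, which for the row $a_2$ is exactly the James--Peel hypothesis $(a_2,b_1)\notin E$. After applying $g_t$ the $b_2$th column becomes $B$'s identically, the $b_1$th column tends to $B$'s, and the rest are unchanged, so $g_t\cdot V_t\to[(I_k\mid B)]$. Thus a general point of the irreducible variety $\Omega_{E'}$ lies in the closed set $Y$, whence $\Omega_{E'}\subseteq Y$; since $\dim\Omega_{E'}=k(n-k)-|E'|=k(n-k)-|E|=\dim Y$ (James--Peel moves preserve the number of cells) and $Y$ is pure-dimensional, $\Omega_{E'}$ is an irreducible component of $Y$. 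Hence $[Y]=m[\Omega_{E'}]+(\text{effective})$ with $m\ge1$, giving $\sigma_E=[Y]\ge[\Omega_{E'}]=\sigma_{C_{b_1\to b_2}E}$; with Step 1 this proves the theorem. I expect the genuinely delicate point to be this curve construction: one must make the flat limit hit a \emph{general} point of $\Omega_{E'}$ (a naive curve staying in the big cell with bounded entries degenerates instead to a special boundary point), and the $1/t$-corrections that make it work are exactly what the James--Peel condition permits; the remaining ingredients (flat limits preserving dimension and class, effectivity of classes of subvarieties, and the equivalences of Step 1) are standard.
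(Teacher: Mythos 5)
The paper does not actually prove this statement---it is quoted verbatim from Liu's thesis (Proposition 5.3.3)---so there is no in-paper argument to compare against; Liu's own proof is likewise a degeneration argument, so your write-up is best viewed as a reconstruction of the cited proof rather than a new route. That said, your argument is correct. Step 1 checks out: complementation within a fixed rectangle does satisfy $(R_{a\to b}E)^\vee=R_{b\to a}(E^\vee)$ and its column analogue, the cells $(i_1,j_2),(i_2,j_1)$ do form a James--Peel pair in $E=D^\vee$, and the $\operatorname{Ann}$-duality $\Gr(k,n)\cong\Gr(n-k,n)$ sends $\Omega_E$ to $\Omega_{E^t}$ and $\sigma_\lambda$ to $\sigma_{\lambda^t}$, so the reduction to a single column move is legitimate. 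The curve construction in Step 3 is the real content and it verifies: the $B_{i,b_2}/t$ entries placed in column $b_1$ at rows with $(i,b_1)\notin E$, $(i,b_2)\in E$ are exactly what is needed to make column $b_2$ of $A(t)$ vanish on $E$, and they disappear in the limit because those positions lie in the union column of $E'$, where $B$ vanishes. Combined with purity of the flat limit over a smooth curve, the dimension count $|E'|=|E|$, and effectivity of classes of subvarieties in the Schubert basis, the conclusion follows.

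Two small remarks. First, genericity of $B$ is never used: your $A(t)$ works for every $B$ vanishing on $E'$, so the phrase ``general point'' can be dropped. Second, the James--Peel hypothesis is nearly invisible in your column-move argument, and rightly so: for a column move $C_{b_1\to b_2}$ either the two columns are comparable (the move is an equivalence and $\sigma$ is unchanged) or they are incomparable, in which case a James--Peel pair in those columns exists automatically. It would be worth saying this explicitly, since otherwise a reader may wonder where the hypothesis of the theorem entered.
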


Like the Schur function $s_D$, $\sigma_D$ only depends on $D$ up to equivalence.

\begin{lem} If $D, D'$ are equivalent diagrams, then $\sigma_D = \sigma_{D'}$. \end{lem}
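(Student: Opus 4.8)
The plan is to reduce the statement to the case of a single elementary move --- either swapping two rows or swapping two columns of $D$ --- since any equivalence $D \simeq D'$ is a composition of such moves (together with insertions and deletions of empty rows and columns, which visibly do not affect $\Omega_D$ or its class). So it suffices to show that if $D'$ is obtained from $D$ by interchanging rows $a$ and $b$, or columns $c$ and $d$, then $\sigma_D = \sigma_{D'}$.

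First I would handle the row case. Interchanging two rows of $D$ corresponds to acting on the defining matrices $(I_k \mid A)$ by left multiplication by the permutation matrix $P$ of the transposition $(a\,b)$. This is a linear change of basis on $\C^n$ fixing the coordinates outside $\{a,b\}$, hence an automorphism $g$ of $\Gr(k,n)$. However, replacing $(I_k \mid A)$ by $P(I_k\mid A) = (P \mid PA)$ does not immediately return a matrix in reduced row-echelon form $(I_k \mid A')$; one must also act on the right by $P$ to fix up the first block, so that $(P \mid PA)$ has the same row span as $(I_k \mid PAP')$ where $P'$ is the analogous column transposition. Since right multiplication by a permutation matrix is just a permutation of the standard basis vectors, the net effect is that $g$ carries $\Omega_D^\circ$ isomorphically onto $\Omega_{D''}^\circ$ where $D''$ is $D$ with rows $a,b$ swapped \emph{and} columns $a,b$ swapped. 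An automorphism of $\Gr(k,n)$ coming from $\GL(n)$ acts trivially on cohomology (the Grassmannian is a homogeneous space for $\GL(n)$, which is connected), so $\sigma_{D''} = \sigma_D$. But swapping columns $a,b$ --- which only makes sense if $D''$ sits in a wide enough rectangle, and here it does since $D''$ still lies in $[k]\times[n-k]$ after the row swap provided $a,b \le n-k$; in general one pads --- is itself a column equivalence, so it remains to treat column swaps, which I take up next.

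For the column case, interchanging columns $c$ and $d$ of $A$ corresponds to right multiplication of $(I_k\mid A)$ by the permutation matrix of the transposition $(k+c\,\,k+d)$ of $[n]$, again an element of $\GL(n)$ acting on $\Gr(k,n)$ with trivial action on $H^*$. Thus $\Omega_D$ and $\Omega_{D'}$ are carried into each other by an automorphism in the connected group $\GL(n)$, so they represent the same cohomology class. Combining the two cases: any equivalence $D \simeq D'$ is realized by an element of $\GL(n)$ (after padding the rectangle if necessary, which does not change the class of a diagram variety sitting in a sub-Grassmannian --- more simply, one can always choose $n$ large enough at the outset), and such an element induces the identity on $H^*(\Gr(k,n),\Z)$. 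Therefore $\sigma_D = \sigma_{D'}$.

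The main obstacle I expect is purely bookkeeping: being careful that the ``reduced row-echelon form'' normalization $(I_k \mid A)$ is preserved under the group action, which forces one to combine a row transposition with the matching column transposition, and then to argue that the leftover column transposition is harmless because it lies in $\GL(n)$. An alternative, perhaps cleaner, route avoids echelon-form juggling entirely: observe directly that for any permutation matrix $g \in \GL(n)$, right translation by $g$ on $\Gr(k,n)$ sends the locally closed set $\{\text{row span of }(I_k\mid A) : A_{ij}=0 \text{ for }(i,j)\in D\}$ to the corresponding set for a diagram obtained from $D$ by a permutation of rows and columns, and since $\GL(n)$ is connected it acts trivially on cohomology; every diagram equivalence arises this way. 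I would present whichever of these is shorter, most likely the second, citing the connectedness of $\GL(n)$ and the standard fact that a connected group acting on a variety induces the trivial action on cohomology classes of subvarieties.
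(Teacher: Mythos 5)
Your second, ``cleaner'' route---realize each row or column permutation of $D$ by right translation of $\Gr(k,n)$ by a suitable $g\in\GL_n$ (permuting the last $n-k$ basis vectors for columns, the first $k$ for rows) and use that $\GL_n$ is connected so translation preserves rational equivalence classes---is correct and is essentially the paper's proof verbatim, so presenting that version is fine.

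One caution about your first route, which contains a real error: left multiplication of the representing matrix $(I_k\mid A)$ by $P\in\GL_k$ does not define an automorphism of $\Gr(k,n)$ at all---it fixes every point, since row span is invariant under left multiplication by invertibles. The correct bookkeeping is that $\Omega_{D'}^{\circ}$ (rows $a,b$ of $D$ swapped) consists of row spans of $(I_k\mid PA)$, and $(I_k\mid PA)$ has the same row span as $P^{-1}(I_k\mid PA)=(P^{-1}\mid A)=(I_k\mid A)\cdot\operatorname{diag}(P^{-1},I_{n-k})$; so a row swap of $D$ corresponds to permuting the first $k$ basis vectors of $\C^n$, with \emph{no} accompanying column swap of $D$. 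Your claimed identification of $(P\mid PA)$ with $(I_k\mid PAP')$, and the resulting assertion that the row swap forces a simultaneous swap of columns $a,b$ of the diagram, is false (and would in any case be nonsense when $a,b>n-k$). Since you ultimately discard this route, the proposal stands, but the confusion between left and right actions on representing matrices is worth fixing.
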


\begin{proof} Permuting columns of $D$ corresponds to a change of
basis of $\C^n$, which does not change $\sigma_D$ since multiplication
by an element of $\GL_{n}$ induces a rational equivalence on varieties
in $\Gr(k,n)$ \cite{Fulton}.  As for rows, identify a permutation $v$
with a permutation matrix. If $(I|A)$ is a matrix representing a point
of $\Omega_D$, then $(I|vA)$ represents the same $k$-plane as
$(v^{-1}I|A)$, and so by permuting the first $k$ basis vectors
according to $v$, we see that $\sigma_D$ is not affected by permuting
rows of $D$.
\end{proof}

Liu proves a weaker result than Conjecture~\ref{conj:diagramvariety}
in the case of diagram varieties for the complement of a permutation
diagram.

\begin{prop}[{\cite[Proposition 5.5.4]{liuthesis}}] \label{thm:rightdegree} Under the Pl\"ucker embedding $\Omega_{D(w)^{\vee}} \hookrightarrow \Gr(k,n) \hookrightarrow \P^{{n \choose k} - 1}$, the degree of $\Omega_{D(w)^{\vee}}$ is $\dim S^{D(w)} = |\Red(w)|$.
\end{prop}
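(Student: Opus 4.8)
The plan is to realize the Pl\"ucker degree of $\Omega_{D(w)^\vee}$ as an intersection number on $\Gr(k,n)$, push it through the ring isomorphism $\phi$, and then pin it down between two bounds extracted from the complete James--Peel tree $JP(w)$ for $D(w)$. The restriction of $\mathcal{O}_{\P^{\binom{n}{k}-1}}(1)$ to $\Gr(k,n)$ has first Chern class the special Schubert class $\sigma_{(1)}$, so the Pl\"ucker degree of any irreducible subvariety $Y\subseteq\Gr(k,n)$ of dimension $d$ is $\langle [Y]\cup\sigma_{(1)}^{\,d},\,[\Gr(k,n)]\rangle$. Since $\Omega_{D(w)^\vee}$ is irreducible of dimension $k(n-k)-|D(w)^\vee|=|D(w)|=\ell(w)$, its degree is $\langle\sigma_{D(w)^\vee}\cup\sigma_{(1)}^{\,\ell(w)},\,[\Gr(k,n)]\rangle$. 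Transporting along $\phi$ (which sends $\sigma_{(1)}$ to $s_{(1)}$), this becomes the coefficient of $s_{(k^{n-k})}$ in $\phi(\sigma_{D(w)^\vee})\cdot s_{(1)}^{\,\ell(w)}$; any lift of $\phi(\sigma_{D(w)^\vee})$ from $\Lambda/\langle s_\mu:\mu\not\subseteq(k^{n-k})\rangle$ to $\Lambda$ may be used, since multiplying an element of that ideal by $s_{(1)}^{\,\ell(w)}$ can only produce Schur functions indexed by partitions still not fitting in $(k^{n-k})$. The claim to prove is therefore that this coefficient equals $\langle s_{D(w)},\,s_{(1)}^{\,\ell(w)}\rangle=\langle F_w,\,s_{(1)}^{\,\ell(w)}\rangle$, i.e.\ the coefficient of $x_1\cdots x_{\ell(w)}$ in $F_w$, which is $|\Red(w)|$ by \eqref{eq:reducedwordcount}. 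Writing $\sigma_{D(w)^\vee}=\sum_\mu c_\mu\sigma_\mu$ and $F_w=\sum_\lambda a_{w\lambda}s_\lambda$, and letting $\mu^\vee$ denote the complementary partition in $(k^{n-k})$, this amounts to the numerical identity $\sum_\mu c_\mu f^{\mu^\vee}=\sum_\lambda a_{w\lambda}f^{\lambda}$.

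For the lower bound I would descend the complete James--Peel tree $JP(w)$ from Theorem~\ref{thm:permutationJPtree}, whose leaves are the Ferrers diagrams $\lambda$ occurring with multiplicity $a_{w\lambda}$. Complementing in the rectangle $(k^{n-k})$ converts a James--Peel move on a diagram $E$ into one on $E^\vee$ (one checks $(R_{a\to b}E)^\vee=R_{b\to a}(E^\vee)$, and similarly for column moves), so Theorem~\ref{lem:liuJP} gives $\sigma_{(RE)^\vee},\,\sigma_{(CE)^\vee}\le\sigma_{E^\vee}$ along every edge; at a $p$-fold branching $E\to(E_1,\dots,E_p)$ coming from Corollary~\ref{cor:subdiagramstaircase}, the cohomological shadow of that corollary---proved exactly as Theorem~\ref{thm:subdiagramstaircase} with Theorem~\ref{lem:liuJP} replacing Theorem~\ref{thm:JP}, peeling off one summand at a time---gives $\sigma_{E^\vee}\ge\sum_i\sigma_{E_i^\vee}$. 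Iterating over the whole tree yields $\sigma_{D(w)^\vee}\ge\sum_{\text{leaves }\lambda}\sigma_{\lambda^\vee}$. Each $\Omega_{\lambda^\vee}$ is a translate of the Schubert variety $X_{\lambda^\vee}$ of dimension $\ell(w)$, whose Pl\"ucker degree is $\langle\sigma_{\lambda^\vee},\sigma_{(1)}^{\,\ell(w)}\rangle=f^{(\lambda^\vee)^\vee}=f^{\lambda}$ by Pieri's rule together with complementation of the skew shape $(k^{n-k})/\lambda^\vee\cong\lambda$. Pairing the inequality with $\sigma_{(1)}^{\,\ell(w)}$ gives $\deg\Omega_{D(w)^\vee}\ge\sum_\lambda a_{w\lambda}f^{\lambda}=|\Red(w)|$.

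The reverse inequality is the hard part, and is where I expect the real work to lie. One natural route is a flat degeneration of $\Omega_{D(w)^\vee}$---say a Gr\"obner degeneration of its affine cone in $\C^{\binom{n}{k}}$ under a generic one-parameter torus action on the coordinates---whose top-dimensional components refine to the translated Schubert varieties $\Omega_{\lambda^\vee}$ attached to the leaves $\lambda$ of $JP(w)$; since degree is constant in a flat family and additive over top-dimensional components of the special fiber, this would force $\deg\Omega_{D(w)^\vee}\le\sum_\lambda a_{w\lambda}\deg\Omega_{\lambda^\vee}=\sum_\lambda a_{w\lambda}f^{\lambda}$, and combined with the lower bound the proof is complete. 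The natural strategy for constructing the degeneration is to degenerate one Lascoux--Sch\"utzenberger transition at a time, so that the combinatorics of the degeneration mirrors the recursion $F_w=\sum_{v\in T(w)}F_v$ encoded in $JP(w)$ (via Lemma~\ref{lem:JPtransitions}); controlling the multiplicities of the components of the special fiber---equivalently, keeping the count from overshooting---is the crux. A possible alternative is a direct B\'ezout-type estimate: parametrize $\Omega_{D(w)^\vee}^\circ\cong\C^{\ell(w)}$ by the entries of $A$ supported on $D(w)$, note that every Pl\"ucker coordinate $p_S(I_k\,|\,A)$ is multilinear in the groups of free entries sharing a column of $D(w)$, and bound the number of solutions of $\ell(w)$ generic linear combinations of the $p_S$ by a mixed-volume count; one then has to identify that count combinatorially with $|\Red(w)|$, which again is the essential content.
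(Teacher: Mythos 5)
First, note that the paper does not actually prove this proposition: it is quoted directly from Liu's thesis (Proposition 5.5.4), and the remark following it only explains why the stated degree is forced \emph{if} Conjecture~\ref{conj:diagramvariety} holds. So you are attempting to reprove a cited result, and your attempt has two genuine gaps. The decisive one is the upper bound $\deg \Omega_{D(w)^{\vee}} \leq |\Red(w)|$, which you yourself flag as ``the hard part'': neither of your two proposed routes (a Gr\"obner or transition-by-transition degeneration whose special fiber has exactly the right components with exactly the right multiplicities, or a mixed-volume count to be identified with $|\Red(w)|$) is carried out, and either one is essentially the entire content of the proposition. As written, the proposal establishes at most one inequality.

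Second, even the lower bound as you argue it has a hole. Theorem~\ref{lem:liuJP} gives the two inequalities $\sigma_{(R_{i_1\to i_2}E)^{\vee}} \leq \sigma_{E^{\vee}}$ and $\sigma_{(C_{j_1\to j_2}E)^{\vee}} \leq \sigma_{E^{\vee}}$ \emph{separately}; it does not assert that their sum is bounded by $\sigma_{E^{\vee}}$. The module-theoretic Theorem~\ref{thm:JP} obtains additivity only from the extra datum $S^{D_C} \subseteq \ker\phi$, and Theorem~\ref{thm:subdiagramstaircase} from the filtration $M_0 \subseteq \cdots \subseteq M_p$; the statement of Theorem~\ref{lem:liuJP} carries no analogue of either, so your claimed ``cohomological shadow'' $\sigma_{E^{\vee}} \geq \sum_i \sigma_{E_i^{\vee}}$ at a $p$-fold branch point is unjustified as stated. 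This is precisely why the paper's own application of Theorem~\ref{lem:liuJP} (the theorem on multiplicity-free $w$) assumes multiplicity-freeness: there each $\sigma_{\lambda^{\vee}}$ need only be shown to occur with coefficient at least $1$, and the sum bound follows because every coefficient in the target expansion equals $1$. In the general case your argument would require a cohomological analogue of the Specht filtration, which is not available from the results quoted in the paper.
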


\begin{rem}
Note that this is what the degree must be if
Conjecture~\ref{conj:diagramvariety} is to hold. This is because
$\sigma_{(1)}$ is the class of a hyperplane intersected with
$\Gr(k,n)$ in the Pl\"ucker embedding, so the degree of
$\Omega_{D^{\vee}}$ is the coefficient of $\sigma_{(k^{n-k})}$ in
$\sigma_{D^{\vee}}\cdot \sigma_{(1)}^{|D|}$ \cite{Fulton}. When $D =
\lambda$ is a partition, it is easy to see using Pieri's rule that
this coefficient is the number of standard Young tableaux of shape
$\lambda$, which is the dimension of $S^{\lambda}$. The claim for
general $D$ follows by linearity.
\end{rem}

\begin{thm} If $w$ is multiplicity free, then Conjecture~\ref{conj:diagramvariety} holds for $\Omega_{D(w)^\vee}$. \end{thm}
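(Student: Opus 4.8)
The plan is to prove $\phi(\sigma_{D(w)^\vee})=s_{D(w)^\vee}$ by induction on the height of the Lascoux--Sch\"utzenberger tree of $w$. Fix $k,n$ with $D(w)\subseteq(k^{n-k})$; since each transition replaces $D(v)$ by $R_{r\to j}C_{w(s)\to w(j)}D(v)$ (Lemma~\ref{lem:JPtransitions}), which only slides cells up and to the left, every diagram $D(v)$ occurring in the tree again lies in $(k^{n-k})$ and has $\ell(w)$ cells, so each $\Omega_{D(v)^\vee}$ has dimension $\ell(w)$. (The normalization replacing $w$ by $1^m\times w$, from the Remark in Section~\ref{sec:background}, lets us assume no transition step passes from $v$ to $1\times v$; one checks this does not affect the statement.) Also $D(w)$ has a complete James--Peel tree $JP(w)$ by Theorem~\ref{thm:permutationJPtree}. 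For the base case $w$ vexillary, $D(w)$ is equivalent to a partition $\lambda$, so $D(w)^\vee$ is equivalent to the complementary partition $\lambda^c$, $\Omega_{D(w)^\vee}$ is a Schubert variety after a change of basis, and $\phi(\sigma_{D(w)^\vee})=\phi(\sigma_{\lambda^c})=s_{\lambda^c}=s_{D(w)^\vee}$; alternatively, invoke Theorem~\ref{thm:liuskew}.

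Three cohomological facts drive the inductive step. First, \emph{monotonicity under arbitrary James--Peel moves}: for any $D\subseteq(k^{n-k})$ and any indices, $\sigma_{(R_{a\to b}D)^\vee}\le\sigma_{D^\vee}$ and $\sigma_{(C_{c\to d}D)^\vee}\le\sigma_{D^\vee}$. Indeed, if the two rows (resp.\ columns) are nested, the move does not change the equivalence class of $D$, and otherwise one exhibits a pair of cells as in Theorem~\ref{lem:liuJP}; iterating, $\sigma_{E^\vee}\le\sigma_{D^\vee}$ whenever $E$ arises from $D$ by a sequence of such moves, so in particular $\sigma_{D(v')^\vee}\le\sigma_{D(v)^\vee}$ for every transition $v'$ of $v$. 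Second, \emph{additivity of degree}: by Proposition~\ref{thm:rightdegree}, $\deg\Omega_{D(v)^\vee}=|\Red(v)|$, and the transition recurrence gives $|\Red(v)|=\sum_{v'\in T(v)}|\Red(v')|$, so $\deg\Omega_{D(v)^\vee}=\sum_{v'}\deg\Omega_{D(v')^\vee}$. Third, \emph{strict monotonicity of the Pl\"ucker degree}: if $\alpha\le\beta$ are effective classes of the same codimension with $\deg\alpha=\deg\beta$, then $\alpha=\beta$, because $\int_{\Gr(k,n)}\sigma_\mu\cdot\sigma_{(1)}^{k(n-k)-|\mu|}$ is the number of standard Young tableaux of $(k^{n-k})/\mu$, hence positive for every $\mu\subseteq(k^{n-k})$.

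On the representation-theoretic side, the needed identity is $s_{D^\vee}=\sum_\nu\langle s_D,s_\nu\rangle\,s_{\nu^c}$ for any $D\subseteq(k^{n-k})$ having a complete James--Peel tree $\mathcal{T}$. To prove it, dualize $\mathcal{T}$: a direct check gives $(R_{a\to b}D)^\vee=R_{b\to a}(D^\vee)$ and $(C_{c\to d}D)^\vee=C_{d\to c}(D^\vee)$, so every move reverses; at a branching coming from a subdiagram $\delta_p\cdot(1)$, the complementary subdiagram is equivalent to the skew shape $\delta_{p+1}/(1)$, and one splices in, in the sense of Definition~\ref{inducedJPtree}, a complete James--Peel tree for it (such a tree exists by James--Peel). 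Its Specht constituents are exactly the rectangular complements of the partitions in $s_{\delta_p}s_{(1)}$, a standard complementation identity, so the resulting complete James--Peel tree for $D^\vee$ has leaves equal to the complements of the leaves of $\mathcal{T}$, giving $s_{D(v)^\vee}=\sum_i s_{\lambda_i^\vee}$ where $\lambda_1,\dots,\lambda_m$ are the shapes in the Schur expansion of $F_v$. Now let $w$ be non-vexillary with transitions $v^1,\dots,v^p$. Since $F_w=\sum_k F_{v^k}$ is multiplicity free, each $F_{v^k}$ is multiplicity free and the $F_{v^k}$ have pairwise disjoint Schur supports, so by the inductive hypothesis $\phi(\sigma_{D(v^k)^\vee})=s_{D(v^k)^\vee}$, whose support --- by the identity above --- is $\{\nu^c:\langle F_{v^k},s_\nu\rangle=1\}$. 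As $\nu\mapsto\nu^c$ is a bijection on partitions in the rectangle, the classes $\sigma_{D(v^k)^\vee}$ have pairwise disjoint supports; together with $\sigma_{D(v^k)^\vee}\le\sigma_{D(w)^\vee}$ this forces $\sum_k\sigma_{D(v^k)^\vee}\le\sigma_{D(w)^\vee}$, and by additivity and strict monotonicity of degree the two sides coincide. Applying $\phi$ and the identity once more, $\phi(\sigma_{D(w)^\vee})=\sum_k s_{D(v^k)^\vee}=s_{D(w)^\vee}$, completing the induction.

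The main obstacle is the representation-theoretic identity $s_{D^\vee}=\sum_\nu\langle s_D,s_\nu\rangle s_{\nu^c}$: one must verify that a complete James--Peel tree can genuinely be dualized, which rests on identifying $(\delta_p\cdot(1))^\vee$ with $\delta_{p+1}/(1)$ and matching its Specht constituents with the complements of the Pieri pieces of $s_{\delta_p}s_{(1)}$. The second delicate point is the passage from the individual bounds $\sigma_{D(v^k)^\vee}\le\sigma_{D(w)^\vee}$ to the bound on their sum: this is precisely where multiplicity-freeness enters, through disjointness of supports, and it is what confines the argument to multiplicity-free $w$ --- for general $w$ the individual bounds may overlap, and one would need a Pieri-type strengthening of Theorem~\ref{lem:liuJP}.
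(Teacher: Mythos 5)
Your overall architecture is sound and, despite the inductive packaging, rests on the same three pillars as the paper's (direct) argument: monotonicity of $\sigma_{(\cdot)^\vee}$ under James--Peel moves via Theorem~\ref{lem:liuJP}, the degree count of Proposition~\ref{thm:rightdegree} combined with strict positivity of the degree of every Schubert class, and multiplicity-freeness to pass from the individual inequalities $\sigma_{\lambda^\vee}\le\sigma_{D(w)^\vee}$ to an inequality for their sum. The paper avoids the induction on the Lascoux--Sch\"utzenberger tree entirely: for each $\lambda$ in the support of $s_{D(w)}$ it uses Theorem~\ref{thm:permutationJPtree} to realize $\lambda$ as the image of $D(w)$ under James--Peel moves, deduces $\sigma_{\lambda^\vee}\le\sigma_{D(w)^\vee}$, and sums using multiplicity-freeness; your disjoint-support step is the same observation carried out one level of the tree at a time.

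The genuine gap is your proof of the complementation identity $s_{D^\vee}=\sum_\nu\langle s_D,s_\nu\rangle s_{\nu^\vee}$, which both your argument and the paper's need in an essential way; the paper simply quotes Magyar's theorem for it. Dualizing a complete James--Peel tree for $D$ can at best produce a James--Peel tree for $D^\vee$ whose leaves are the complements $\lambda_i^\vee$ of the leaves of the original tree, and by Lemma~\ref{lem:JPtree} a James--Peel tree yields only the inclusion $\bigoplus_i S^{\lambda_i^\vee}\hookrightarrow S^{D^\vee}$, i.e.\ the inequality $s_{D^\vee}\ge\sum_\nu\langle s_D,s_\nu\rangle s_{\nu^\vee}$. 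Completeness of the dual tree is not automatic: it would require knowing $\dim S^{D^\vee}$ (equivalently, the full expansion of $s_{D^\vee}$) independently, and you cannot dualize back to get the reverse inequality because no complete James--Peel tree for $D^\vee$ is available a priori ($D(w)^\vee$ is not a permutation diagram). With only the inequality, the supports of the $s_{D(v^k)^\vee}$ need not be disjoint --- which breaks the step $\sum_k\sigma_{D(v^k)^\vee}\le\sigma_{D(w)^\vee}$ --- and the final identification $\sum_k s_{D(v^k)^\vee}=s_{D(w)^\vee}$ fails as well. So either cite Magyar's result, as the paper does, or supply an independent computation of $\dim S^{D^\vee}$; the rest of your argument then goes through.
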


\begin{proof}
Magyar \cite{magyarborelweil} showed that for any diagram $D$ (contained in a fixed rectangle), if $s_D = \sum_{\lambda} a_{\lambda} s_{\lambda}$ then $s_{D^\vee} = \sum_{\lambda} a_{\lambda} s_{\lambda^{\vee}}$. In particular, $s_{D(w)^\vee}$ is multiplicity free if $w$ is. Suppose $s_{\lambda^\vee}$ appears in $s_{D(w)^\vee}$. Then $\lambda$ is the image of $D(w)$ under a sequence of James-Peel moves, by Theorem~\ref{thm:permutationJPtree}. Theorem~\ref{lem:liuJP} then shows that $\sigma_{\lambda^\vee} \leq \sigma_{D(w)^\vee}$. Since $s_{D(w)^\vee}$ is multiplicity-free, this implies $\phi^{-1}(s_{D(w)^{\vee}}) \leq \sigma_{D(w)^\vee}$. Equality now follows from Proposition~\ref{thm:rightdegree}.
\end{proof}

Theorems~\ref{thm:liuskew} and \ref{thm:liuforests} prove Conjecture~\ref{conj:diagramvariety} when $D(w)$ is equivalent to a skew shape or a forest, and we note that these have nice statements in terms of pattern-avoidance as well. It is shown in \cite{billeyjockuschstanley} that if $w$ is $321$-avoiding, then $D(w)$ is equivalent to a skew shape. As for forests,

\begin{thm}
The graph $G_{D(w)}$ is a forest if and only if $w$ avoids $3412$, $4312$, $3421$, and $4321$.
\end{thm}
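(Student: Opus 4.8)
This is the easy half. One checks directly that each of $D(3412)$, $D(4312)$, $D(3421)$, $D(4321)$ contains the four cells $(1,1),(1,2),(2,1),(2,2)$, i.e.\ a $2\times 2$ all-ones block, so for $v$ any of these four permutations $G_{D(v)}$ contains the $4$-cycle on rows $\{1,2\}$ and columns $\{1,2\}$. If $w$ contains $v$ as a pattern in positions $i_1<\cdots<i_k$, then (as recalled after Definition~\ref{inducedJPtree}) $D(v)$ is the subdiagram of $D(w)$ on rows $i_1,\ldots,i_k$ and columns $w(i_1),\ldots,w(i_k)$, so $G_{D(v)}$ is a subgraph of $G_{D(w)}$; hence $G_{D(w)}$ has a cycle and is not a forest.

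\textbf{Forward direction, setup.} Suppose $G_{D(w)}$ is not a forest and fix a \emph{shortest} cycle $C$; it is chordless, since a chord would, together with part of $C$, form a strictly shorter cycle. Number its rows $r_1,\ldots,r_m$ and columns $c_1,\ldots,c_m$ (indices read modulo $m$) so that the cells of $D(w)$ lying in these rows and columns are exactly $(r_i,c_i)$ and $(r_{i+1},c_i)$; chordlessness guarantees there are no others. Unwinding the definition $D(w)=\{(a,b):w^{-1}(b)>a,\ w(a)>b\}$: from $(r_i,c_{i-1}),(r_i,c_i)\in D(w)$ we get $w(r_i)>\max(c_{i-1},c_i)$, and from $(r_i,c_i),(r_{i+1},c_i)\in D(w)$ we get $w^{-1}(c_i)>\max(r_i,r_{i+1})$. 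On the other hand, for $j\notin\{i-1,i\}$ chordlessness gives $(r_i,c_j)\notin D(w)$, i.e.\ $w^{-1}(c_j)\le r_i$ or $w(r_i)\le c_j$; combining with the two inequalities above yields, for all such pairs $(i,j)$,
\begin{equation*}
\bigl(r_i>r_j \ \text{and}\ r_i>r_{j+1}\bigr)\quad\text{or}\quad\bigl(c_j>c_{i-1}\ \text{and}\ c_j>c_i\bigr).\tag{$\star$}
\end{equation*}

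\textbf{Forward direction, the key step.} I claim $m=2$. Suppose instead $m\ge 3$. Let $r_t$ be the topmost row among the $r_i$'s; applying $(\star)$ with $i=t$ makes the first alternative false for every $j$, so $c_j>\max(c_{t-1},c_t)$ for all $j\notin\{t-1,t\}$, i.e.\ $c_{t-1},c_t$ are the two leftmost columns. Let $c_l$ (with $l\in\{t-1,t\}$) be the leftmost of these; applying $(\star)$ to the pairs $(i,l)$ makes the second alternative false, so $r_l,r_{l+1}$ are the two topmost rows — consistent with $r_t$ being topmost, and it identifies which of $r_{t-1},r_{t+1}$ is second-topmost. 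Now apply $(\star)$ once more using the second-leftmost column, $c_{l'}$ with $\{l,l'\}=\{t-1,t\}$: as $c_{l'}$ is greater than only the leftmost column, the second alternative fails for the pairs $(i,l')$ with $i\ne l$, forcing $r_i>\max(r_l,r_{l'})$ for all $i\notin\{l'-1,l'\}$; taking $i$ to be the second-topmost row index identified above gives an inequality contradicting its being second-topmost. (For instance if $l=t-1$: the two topmost rows are $r_{t-1},r_t$, so $r_{t-1}$ is second-topmost, but applying $(\star)$ with column $c_t$ forces $r_{t-1}>r_{t+1}$, whereas second-topmost means $r_{t-1}<r_{t+1}$.) Hence $m=2$, so $D(w)$ contains a $2\times 2$ block $(r_1,c_1),(r_1,c_2),(r_2,c_1),(r_2,c_2)$ with $r_1<r_2$, $c_1<c_2$.

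\textbf{Forward direction, extracting the pattern.} From such a block, reading off the definition of $D(w)$ gives $w(r_1),w(r_2)>c_2$ and $w^{-1}(c_1),w^{-1}(c_2)>r_2$. Let $p<q$ be $w^{-1}(c_1),w^{-1}(c_2)$ in increasing order; then $r_1<r_2<p<q$, the values $w(r_1),w(r_2)$ are both larger than $w(p),w(q)$, and $\{w(p),w(q)\}=\{c_1,c_2\}$. According to the relative order of $w(r_1),w(r_2)$ and the relative order of $w^{-1}(c_1),w^{-1}(c_2)$, the subsequence $w(r_1)w(r_2)w(p)w(q)$ flattens to $3412$, $4312$, $3421$, or $4321$, so $w$ contains one of the four patterns. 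The main content of the argument is the third paragraph: the claim that $G_{D(w)}$ contains no chordless cycle of length $\ge 6$, for which the inequalities coming from the Rothe structure of $D(w)$ are essential.
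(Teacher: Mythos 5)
Your proof is correct. The reverse direction is the same as the paper's (subdiagram containment passes cycles up from $G_{D(v)}$ to $G_{D(w)}$), and in the forward direction both arguments ultimately reduce to producing a full $2\times 2$ block of cells in $D(w)$ and then reading off one of $3412$, $4312$, $3421$, $4321$ from the positions $w^{-1}(c_1),w^{-1}(c_2)$ and the relative order of $w(r_1),w(r_2)$. The difference is in how the block is produced. You take a shortest, hence chordless, cycle and run the inequality argument $(\star)$ --- which is in effect an iterated use of the northwest property of $D(w)$ --- to conclude the cycle has length $4$; this proves the stronger structural statement that $G_{D(w)}$ has no induced cycle of length $\geq 6$. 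The paper shortcuts this entirely: from an \emph{arbitrary} cycle $b_1,\dots,b_m$ it picks $b_i=(p,q)$ with $q$ maximal and then $p$ maximal, so that the neighbours $b_{i-1},b_{i+1}$ together with $b_i$ form the corner $\{(p,q'),(p,q),(p',q)\}$ with $p'<p$, $q'<q$, and a single application of the northwest property fills in $(p',q')$. So the paper's middle step is one line where yours is a page; on the other hand your version yields the (correct, and mildly interesting) extra fact about induced cycles in graphs of northwest diagrams. I checked the derivation of $(\star)$, the chordlessness reduction, and the three-stage extremal argument forcing $m=2$; all are sound, including the modular-index caveats that require $m\geq 3$.
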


The permutations avoiding these four patterns have been studied by
Elizalde \cite{elizalde-almost-increasing-perms} in the context of
almost increasing permutations.

\begin{proof}
Clearly, if $D(w)$ has the property that the graph $G_{D(w)}$ is a forest, then so do all its
subdiagrams. Therefore, $w$ cannot contain $3412$, $4312$, $3421$, or $4321$, as one easily checks that none of these have graphs which
are forests.

For the converse, suppose that $G=G_{D(w)}$ is not a forest. Take a
sequence of distinct cells $b_1, \ldots, b_m \in D$ forming a cycle in
$G$.  Choose $i$ so that $b_i = (p,q)$ with $q$ maximal, then with $p$
maximal for that $q$. The three cells $b_{i-1}, b_i, b_{i+1}$ then form the pattern
$\begin{array}{cc}
        & \circ\\
\circ & \circ
\end{array}$
in $D(w)$. Since $D(w)$ is northwest, it therefore contains
\begin{equation*}
\begin{array}{cc}
\circ & \circ\\
\circ & \circ
\end{array}
\end{equation*}
as a subdiagram.
After adding $\times$ to these rows and columns as usual for a permutation diagram, we must end up with one of the four following subdiagrams:
\begin{equation*}
\begin{array}{cccc}
\circ  & \circ & \times & \cdot \\
\circ  & \circ & \cdot  & \times\\
\times  & \cdot & \cdot  & \cdot \\
\cdot  &  \times & \cdot & \cdot
\end{array}\qquad
\begin{array}{cccc}
\circ  & \circ & \circ & \times \\
\circ  & \circ & \times  & \cdot\\
\times  & \cdot &  \cdot & \cdot \\
\cdot  &  \times & \cdot & \cdot
\end{array}\qquad
\begin{array}{cccc}
\circ  & \circ & \times & \cdot \\
\circ  & \circ & \cdot  & \times\\
\circ  & \times &  \cdot & \cdot\\
\times  &  \cdot & \cdot & \cdot
\end{array} \qquad
\begin{array}{cccc}
\circ  & \circ & \circ & \times \\
\circ  & \circ & \times  & \cdot\\
\circ  & \times & \cdot  & \cdot\\
\times  &  \cdot & \cdot & \cdot
\end{array}
\end{equation*}
Then in the positions of $w$ corresponding to these four rows one finds a
pattern $3412$, $4312$, $3421$, or $4321$.
\end{proof}

\section{Multiplicity Bounded Permutations}
\label{sec:mult.free}

We will say a permutation $w$ is \textit{multiplicity free} provided
all nonzero coefficients of the Stanley symmetric function $F_{w}$ are
1.  See A224287 in the OEIS for the number of multiplicity free
permutations in $S_{n}$ as a function of $n$.  By
Corollary~\ref{cor:kvexrespectsmultiplicity}, we know the multiplicity
free permutations respect pattern containment in the classical sense.
We discuss a new type of pattern containment which these permutations
also respect using the code of a permutation.  We follow up with
another variation on the theme of bounding the multiplicities in a
Stanley symmetric function which generalize vexillary permutations.

\begin{lem}\label{l:mult.free.3.vex}
Every 3-vexillary permutation is multiplicity free.
\end{lem}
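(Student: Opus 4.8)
The plan is to show that if $w$ is $3$-vexillary, i.e. $EG(w) \leq 3$, then the Schur expansion $F_w = \sum_\lambda a_{w\lambda} s_\lambda$ has all coefficients in $\{0,1\}$. Since $EG(w) = \sum_\lambda a_{w\lambda}$, the only way multiplicity could fail is if $EG(w) = 2$ with a single $\lambda$ appearing with coefficient $2$, or $EG(w) = 3$ with a single $\lambda$ appearing with coefficient $2$ (and another appearing once) or coefficient $3$. I would rule out each of these via the structure theory developed earlier in the paper, especially Lemma~\ref{lem:minmaxpartitions}.

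First I would handle the $EG(w) = 3$, coefficient-$3$ case: if $F_w = 3 s_\lambda$ then $S^{D(w)} \simeq 3 S^\lambda$, but Lemma~\ref{lem:minmaxpartitions} says $S^{D(w)^{\min}}$ and $S^{D(w)^{\max}}$ each appear with multiplicity one in $S^{D(w)}$; since both appear, and only one partition $\lambda$ occurs at all, we would need $D(w)^{\min} = D(w)^{\max} = \lambda$ to appear with multiplicity one — contradicting multiplicity three. The same argument kills the $EG(w)=3$ case where one shape has coefficient $2$ and another has coefficient $1$: the shape with coefficient $2$ cannot be either $D(w)^{\min}$ or $D(w)^{\max}$ (those have multiplicity one), but it must be strictly between them in dominance order by Lemma~\ref{lem:minmaxpartitions}, forcing at least three distinct shapes — again a contradiction with only two shapes present. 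The remaining case is $EG(w) = 2$ with $F_w = 2 s_\lambda$: here the same multiplicity-one statement for $D(w)^{\min}$ and $D(w)^{\max}$ forces them to coincide with $\lambda$ and have multiplicity one, contradicting multiplicity two.

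So in fact the argument collapses to a single clean observation: whenever $EG(w) \leq 3$, the presence of the two (possibly equal) multiplicity-one shapes $D(w)^{\min} \leq D(w)^{\max}$ leaves at most one "slot" of multiplicity left over, and that slot can hold at most one more copy of one more shape, so no coefficient can exceed $1$. Concretely: if some $a_{w\lambda} \geq 2$, then $\lambda \neq D(w)^{\min}$ and $\lambda \neq D(w)^{\max}$ (else multiplicity-one is violated unless $D(w)^{\min} = D(w)^{\max} = \lambda$, which forces $EG(w) \geq a_{w\lambda}+1 \geq 3$ counting the two distinct appearances — wait, need care here), so $D(w)^{\min}, \lambda, D(w)^{\max}$ contribute, and if $D(w)^{\min} = D(w)^{\max}$ then that shape alone has multiplicity one while $\lambda$ is distinct and $\leq D(w)^{\max}$ forces $\lambda = D(w)^{\min}$, contradiction. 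I would write this out carefully by casework on whether $D(w)^{\min} = D(w)^{\max}$, which is the one place that needs attention.

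The main obstacle — really the only subtlety — is bookkeeping the dominance-order constraints and the multiplicity-one constraints simultaneously without double-counting: one must be careful that "$\lambda$ strictly between $D(w)^{\min}$ and $D(w)^{\max}$ in dominance" does not by itself force three \emph{distinct} shapes when $D(w)^{\min} = D(w)^{\max}$, since then there is no room strictly between them and $\lambda$ must equal that common value, immediately contradicting $a_{w\lambda} \geq 2$ against multiplicity one. Once that edge case is dispatched, the generic case $D(w)^{\min} \neq D(w)^{\max}$ gives three forced distinct shapes (the two endpoints plus $\lambda$), so $EG(w) \geq 1 + 2 + 1 = 4 > 3$, a contradiction. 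I do not expect to need the James-Peel machinery or the pattern characterization of Theorem~\ref{thm:3vexthm} for this lemma — only Lemma~\ref{lem:minmaxpartitions} and the identity $EG(w) = \sum_\lambda a_{w\lambda}$.
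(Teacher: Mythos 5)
Your argument is correct and is exactly the paper's intended proof: the paper's entire proof reads ``Apply Lemma~\ref{lem:minmaxpartitions} to $D(w)$,'' and your casework (a shape with $a_{w\lambda}\geq 2$ must differ from both $D(w)^{\min}$ and $D(w)^{\max}$, forcing total multiplicity at least $4$ when they are distinct, while if they coincide every shape equals that common value and has multiplicity one) is precisely the bookkeeping that one-line proof leaves to the reader. No gaps.
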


\begin{proof}
Apply Lemma~\ref{lem:minmaxpartitions} to $D(w)$.
\end{proof}

The following conjecture has been tested through $S_{12}$ and one direction follows from Corollary~\ref{cor:kvexrespectsmultiplicity}. 
For the minimal list of 189 patterns up to $S_{11}$, see
\begin{center}
 \url{http://www.math.washington.edu/~billey/papers/k.vex.html}.
\end{center}

\begin{conj}\label{conj:mult.free}
The set of multiplicity free permutations is closed under taking
patterns and the minimal patterns all occur in $S_{n}$ for $n\leq 11$.
\end{conj}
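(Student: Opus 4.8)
\emph{A plan of attack.} The first assertion of the conjecture is already a theorem: it is the $k=1$ case of Corollary~\ref{cor:kvexrespectsmultiplicity}, since if $\langle F_w,s_\lambda\rangle\le 1$ for every $\lambda$ and $v$ is a pattern in $w$, then $\langle F_v,s_\mu\rangle\le 1$ for every $\mu$. The open content is therefore that the (automatically antichain) set of \emph{minimal} non-multiplicity-free permutations is finite and lies in $S_6\cup\cdots\cup S_{11}$. Since permutations are not well-quasi-ordered under pattern containment, even finiteness needs a proof, and the natural target is an \emph{absolute} constant $M$ such that every non-multiplicity-free $w$ contains a non-multiplicity-free pattern in some $S_m$ with $m\le M$. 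Given such an $M$, the reduction to $S_{\le 11}$ becomes a finite computation: enumerate the non-multiplicity-free permutations in $S_{\le M}$, discard those containing a strictly smaller non-multiplicity-free pattern, and verify that the survivors lie in $S_{\le 11}$ --- this has already been checked through $S_{12}$.

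The plan for producing $M$ is to imitate the proof of Theorem~\ref{thm:kvex}, with ``a colorful subtree having $k+1$ leaves'' replaced by ``a colorful subtree that exhibits a repeated Schur function''. Suppose $F_w$ is not multiplicity free. The leaves of $RJP(w)$ are the vexillary leaves of the Lascoux--Sch\"utzenberger tree of $w$, and the multiset of their shapes equals the Schur expansion of $s_{D(w)}=F_w$; hence there are two distinct leaves $A_1\neq A_2$ of $RJP(w)$ both equivalent to one partition $\lambda$. Let $[D(v)]$ be the vertex at which the root-to-$A_1$ and root-to-$A_2$ paths first diverge, passing to children $[D(v^a)]$ and $[D(v^b)]$ with $v^a,v^b$ transitions of $v$. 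The subtrees of $RJP(w)$ below $[D(v^a)]$ and $[D(v^b)]$ are complete James--Peel trees for $D(v^a)$ and $D(v^b)$, so each of $F_{v^a}$ and $F_{v^b}$ contains $s_\lambda$; by the transition recurrence \eqref{eq:transition-recurrence}, $F_v=\sum_c F_{v^c}$, so $\langle F_v,s_\lambda\rangle\ge 2$. Thus the obstruction to multiplicity-freeness is already visible at the vertex $v$, even though $v$ itself is not in general a pattern of $w$.

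One then takes $\mathcal{T}$ to be the minimal subtree of $RJP(w)$ through the root, $A_1$ and $A_2$, enlarges it to a colorful subtree by adjoining the missing $R$-child and $C$-child at each internal vertex (exactly as in the proof of Theorem~\ref{thm:kvex}), and reads off from the degree count in the proof of Lemma~\ref{lem:subtreepattern} that if $\mathcal{T}$ has $N$ leaves then $w_{\mathcal{T}}\in S_m$ with $m\le 4N-4$. Since $\mathcal{T}$, after the flattening relabeling, is a (generally incomplete) James--Peel tree for $D(w_{\mathcal{T}})$, Lemma~\ref{lem:JPtree} embeds $\bigoplus_i S^{B_i}$ into $S^{D(w_{\mathcal{T}})}$, where the $B_i$ are the leaves of $\mathcal{T}$ read in the ambient rectangle of $w_{\mathcal{T}}$; so it would suffice to know that the multiset $\{\shape(B_i)\}$ still contains a repeat, and that $N$ is bounded by an absolute constant.

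That last point is the main obstacle and the reason the statement remains a conjecture. In the $k$-vexillary case any colorful subtree with $k+1$ leaves works, because its leaves are subject to no mutual constraint; here the two distinguished leaves must retain a \emph{common} shape after restriction to the rows and columns touched by $\mathcal{T}$, and that restriction can destroy the equivalence $A_1\simeq A_2$. Forcing the coincidence to persist seems to demand including enough of the ``environment'' of $A_1$ and $A_2$, and the minimal subtree of $RJP(w)$ through two prescribed same-shape leaves can be arbitrarily deep (the Lascoux--Sch\"utzenberger tree of $w$ can be long), so there is no obvious bound on $N$. Two avenues I would pursue: (i) show that a shape coincidence always manifests \emph{locally}, e.g.\ that one may choose the divergence vertex $v$ above within bounded distance of a bounded-size pattern of $w$, and that running the transition recurrence for only a bounded number of further steps cannot let a multiplicity $\ge 2$ collapse; or (ii) bypass James--Peel trees and argue directly from the dominance interval $[\,D(w)^{\min},D(w)^{\max}\,]$ of Lemma~\ref{lem:minmaxpartitions} together with the Reiner--Shimozono decomposition of $S^{D(w)}$ for percent-avoiding $D$ \cite{percentavoiding}, bounding the ``width'' of $D(w)$ actually needed to realize a repeated shape. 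Any absolute bound $M$ obtained this way completes the argument, modulo the finite verification that reduces $S_{\le M}$ to $S_{\le 11}$.
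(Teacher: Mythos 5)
This statement is a conjecture in the paper, not a theorem: the authors offer no proof, remarking only that it ``has been tested through $S_{12}$'' and that ``one direction follows from Corollary~\ref{cor:kvexrespectsmultiplicity}.'' Your treatment of the first assertion is therefore exactly the paper's: closure under pattern containment is the $k=1$ case of Corollary~\ref{cor:kvexrespectsmultiplicity}, which in turn rests on Theorem~\ref{thm:patternthm}. That part of your write-up is correct and complete.

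For the second assertion (minimality in $S_{\le 11}$, and in particular finiteness of the minimal list), your proposal is an honest research plan rather than a proof, and you say so yourself. Your diagnosis of the obstruction is accurate and is precisely why the statement remains open. In the $k$-vexillary argument (Theorem~\ref{thm:kvex}), any colorful subtree of $RJP(w)$ with $k+1$ leaves suffices, because Lemma~\ref{lem:subtreepattern} only needs the leaf \emph{count} to survive restriction to the pattern $w_{\mathcal{T}}$: the inequality $k \le EG(w_{\mathcal{T}})$ comes from $\mathcal{T}$ being a (possibly incomplete) James--Peel tree for $D(w_{\mathcal{T}})$. A witness to non-multiplicity-freeness, by contrast, is a \emph{coincidence} of shapes between two leaves, and nothing in Lemma~\ref{lem:JPtree} or Lemma~\ref{lem:subtreepattern} guarantees that the two leaves of $\mathcal{T}$, re-read inside the smaller ambient rectangle of $w_{\mathcal{T}}$, are still equivalent diagrams. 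Your observation that the multiplicity is already visible at the divergence vertex $v$ of the two root-to-leaf paths is correct (and is a nice reduction), but $v$ is a vertex of the L--S tree, not a pattern of $w$, so it does not bound the pattern size. Absent an absolute bound $M$ on the number of positions needed to preserve the coincidence, the finite computation you describe cannot be launched. So: no genuine error, but a genuine gap, one shared with the paper, which also leaves this as a conjecture supported only by the verification through $S_{12}$.
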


Recall the inversion set of $w \in S_{n}$ is
\[
\mathrm{Inv}(w)=\{(i,j): 1\leq i< j\leq n \text{ and }  w_{i}>w_{j}\}.
\]
The \textit{code} of $w$ is the vector $\mathrm{code}(w)=(c_{1},\dots
, c_{n})$ such that $c_{k}$ is the number of inversions $(k,j)$ for
any $k<j\leq n$.  Equivalently, $c_{k}$ is the number of cells on row
$k$ of $D(w)$.  An inverse operation $\mathrm{code}^{-1}(c_{1},\dots
,c_{n})$ is obtained by multiplying out the reduced expression
$(s_{c_{1}} \dots s_{2}s_{1}) \cdot (s_{c_{2}+1} \dots s_{3}s_{2}) \cdots
(s_{c_{n}+n-1}\cdots s_{n}) $ where each factor is a consecutive
decreasing string of adjacent transpositions.  Adding additional zeros
at the beginning or the end of the code will not change the
corresponding Stanley symmetric function.  Furthermore, for every
vector of nonnegative integers there exists a permutation with this
vector as its code plus possibly some additional terminal 0's.

For example, $\mathrm{code}^{-1}(3,0,5,1) = (s_{3}s_{2}s_{1}) \cdot
(s_{7}s_{6}s_{5}s_{4}s_{3}) \cdot (s_{4}) = 4 1 8 3 2 5 6 7 $ and
$\mathrm{code}(4 1 8 3 2 5 6 7)=(3,0,5,1,0,0,0,0)$.

\begin{defn}\label{def:code.pattern}
We will say a permutation $w$ contains a permutation $v$ as a
\textit{simple code pattern} provided $\mathrm{code}(w)=(c_{1},\dots ,
c_{n})$, there exists an $i$ such that $c_{i}=0$, and
$\mathrm{code}^{-1}(c_{1},\dots , \widehat{c_{i}}, \dots , c_{n})=v$.
Say $w$ contains $v$ as a \textit{code pattern} provided there exists
a sequence of permutations $u^{(1)}, \ldots, u^{(k)}$ such that
$w=u^{(1)}$, $v=u^{(k)}$ and each $u^{(i)}$ contains $u^{(i+1)}$ as a
simple code pattern.    In this case, $|D(v)|=|D(w)|.$
\end{defn}

\begin{lem}\label{lem:code.pattern}
If $w$ contains $v$ as a code pattern, then $S^{D(v)} \hookrightarrow
S^{D(w)}$ as a submodule and $F_{w}-F_{v}$ is Schur positive.
\end{lem}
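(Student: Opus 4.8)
The plan is to reduce to the case of a single simple code pattern, since a general code pattern is obtained by iterating simple code patterns and both the submodule relation and Schur positivity are transitive. So assume $\mathrm{code}(w) = (c_1, \ldots, c_n)$ with $c_i = 0$ for some $i$, and $v = \mathrm{code}^{-1}(c_1, \ldots, \widehat{c_i}, \ldots, c_n)$. The key observation is that passing from $w$ to $v$ amounts, at the level of diagrams, to deleting the empty row $i$ of $D(w)$ and then closing up the gap—in other words, $D(v)$ is equivalent to $D(w)$ with its (empty) $i$th row removed. Concretely, one checks that deleting a coordinate $c_i = 0$ from the code and reindexing corresponds exactly to sliding rows $i+1, \ldots, n$ of $D(w)$ up one, which leaves a diagram equivalent (in the sense of Definition following Theorem~\ref{thm:JP}, i.e. up to permuting/inserting/deleting empty rows and columns) to $D(w)$ itself. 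Thus $D(v) \simeq D(w)$ as diagrams.

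Granting that, the lemma is nearly immediate: equivalent diagrams have isomorphic Specht modules, so $S^{D(v)} \cong S^{D(w)}$, giving the inclusion $S^{D(v)} \hookrightarrow S^{D(w)}$ (in fact an isomorphism), and likewise $F_v = s_{D(v)} = s_{D(w)} = F_w$, so that $F_w - F_v = 0$ is trivially Schur positive. The final assertion $|D(v)| = |D(w)|$ in Definition~\ref{def:code.pattern} is consistent with this: removing an empty row does not change the number of cells. So the entire content of the lemma is the diagrammatic claim that a simple code pattern move is a diagram equivalence.

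The step I expect to be the main obstacle—really the only non-formal step—is verifying carefully that $\mathrm{code}^{-1}$ applied to the code with $c_i$ deleted does produce a permutation whose Rothe diagram is $D(w)$ with row $i$ deleted, rather than some other diagram with the same row-length multiset. This requires unwinding the definition of $\mathrm{code}^{-1}$ via the product $(s_{c_1}\cdots s_1)(s_{c_2+1}\cdots s_2)\cdots$ and matching it against the Rothe diagram construction. The cleanest route is probably to argue directly from inversion sets: show that the inversions of $v$ are in bijection with the inversions $(k,j)$ of $w$ with $k \ne i$ and $j \ne i$ (note $w(i)$ is determined by $c_i = 0$, meaning $w(i)$ is the smallest value not yet used, so row $i$ of $D(w)$ is genuinely empty and column $w(i)$ plays a trivial role), via the standard order-isomorphism between $\{1,\ldots,n\}\setminus\{i\}$ (as positions) and $\{1,\ldots,n-1\}$, and between the value sets correspondingly. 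Once this bijection of inversion sets is in hand, the equivalence $D(v) \simeq D(w)$ follows and the proof concludes as above.
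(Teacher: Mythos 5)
Your proof rests entirely on the claim that a simple code-pattern move is a diagram equivalence, i.e.\ that $D(v)\simeq D(w)$, so that $F_v=F_w$. That claim is false, and the paper's own example already refutes it. Take $w=41832567$ with $\code(w)=(3,0,5,1,0,\ldots)$ and delete the $0$ in position $2$: then $v=\code^{-1}(3,5,1,0,\ldots)=4721356$. The nonempty columns of $D(w)$ have lengths $3,2,1,1,1,1$, while those of $D(v)$ have lengths $3,2,2,1,1$; since equivalence only permutes rows and columns, these diagrams are not equivalent. In fact $D(v)^{\min}=D(v)^{\max}=(5,3,1)$, so $F_v=s_{(5,3,1)}$ is a single Schur function, whereas $D(w)^{\min}=(5,3,1)\neq(6,2,1)=D(w)^{\max}$, so by Lemma~\ref{lem:minmaxpartitions} $F_w$ has at least two terms. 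Hence $F_w\neq F_v$, and your conclusion ``$F_w-F_v=0$'' is wrong. Your fallback via inversion sets fails for the same reason: although row $i$ of $D(w)$ is empty, column $w(i)$ need not be (here $(1,1)\in D(w)$, coming from the inversion $(1,2)$ of $w$), so the inversions of $w$ ``not involving position $i$'' number only $8$, while $\ell(v)=\ell(w)=9$; there is no bijection of the kind you describe.

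What actually happens when you delete $c_i=0$ and rebuild the permutation from the shortened code is this: above row $i$ nothing changes, but in the rows below row $i$ every occupied column to the southeast of $(i,w(i))$ slides one step left into the previous occupied column (the leftmost of them sliding into column $w(i)$ itself, which is empty below row $i$ but may be occupied above it). Each such slide is a leftward column James--Peel move $C_{j_l\to j_{l-1}}$, not a column permutation, and it can genuinely merge columns. The correct conclusion is therefore not an isomorphism but the inclusion $S^{D(v)}\hookrightarrow S^{D(w)}$ furnished by Remark~\ref{rem:JPinclusion} and Lemma~\ref{lem:JPtree}, which is exactly what the lemma asserts and what still needs to be proved: one must verify that $(i',j_l)\in D(w)$ if and only if $(i',j_{l-1})\in D(v)$ for $i'>i$, and that these column moves do not disturb the cells in rows above $i$. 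Your write-up skips precisely this, the only substantive step.
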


\begin{proof}
Without loss of generality, assume that $w \in S_n$ contains $v \in
S_{n}$ as a simple code pattern, and the code of $v$ is obtained from
the code of $w$ by removing $c_{k}=0$ and adding a zero at the end.
Since $c_{k}=0$, $w(k) = \mathrm{min}\{w(k),w(k+1),\dotsc , w(n)\}$.  Let $D$ be $D(w)$ with the empty row $k$ removed, so $S^{D} \simeq S^{D(w)}$.  


Recall that the code of a permutation is given by the number of
elements in the diagram on each row. Going from a code vector to the corresponding 
diagram is easy.  Starting at the first row, fill in the appropriate
number of cells left justified.  Place an $\times $ in the next
position and cross out everything below and to its right. For the next
row, starting from the leftmost available position that hasn't already
been crossed out, greedily place the appropriate number of cells
moving left to right.  Once the cells are placed in the row, put an
$\times$ in the next available position and cross out everything below
and to the right of the $\times$.  Continue until only fixed points are
added to the permutation.  Thus, the first $k-1$ rows and $w(k)-1$
columns of $D$ and $D(v)$ are identical since the codes of $v,w$ agree in the
first $k-1$ positions.

It remains to show that there exists a sequence of James-Peel moves taking $D$
to $D(v)$ which only modifies cells southeast of $(k,w(k))$.  Let
$j_{1}<j_{2}<\dotsb <j_{a}$ be the occupied columns of $D(w)$ southeast
of $(k,w(k))$.  Let $j_{0}=w(k)$.  Observe that $D(w)$ is empty in
column $j_{0}$ below row $k$ but may contain cells above row $k$.  We
claim that for $i>k$ and $1\leq l\leq a$, $(i,j_{l}) \in D(w)$ if and
only if $(i,j_{l-1}) \in D(v)$ by construction of the diagram from the
code.  So we can shift the occupied columns of $D(w)$ southeast of
$(k,w(k))$ over left by applying $C_{j_{0} \to j_{1}}$ to $D$ and
then applying $C_{j_{1} \to j_{2}}$, etc.  Furthermore, for $i<k$ if $(i,j_{l})
\in D$ then $(i,j_{l-1}) \in D$ and $D$ and $D(v)$ agree above row
$k$, so applying each $C_{j_{l}\to j_{l-1}} D$ does not change any
cells above row $k$.  Thus,
\[
D(v) =C_{j_{a} \to j_{a-1}} \cdots C_{j_{2} \to j_{1}} C_{j_{1} \to
j_{0}} D.
\]
We conclude that $S^{D(v)} \hookrightarrow S^{D(w)}$ by
Lemma~\ref{lem:JPtree}.   
\end{proof}

\begin{cor}\label{cor:code.pat}
Assume $w$ contains $v$ as a code pattern.  If $w$ is multiplicity
free, then so is $v$.
\end{cor}

Next we generalize multiplicity free permutations to a filtration of
permutations.

\begin{defn}\label{defn:k.coeff.bounded}
A permutation $w$ is $k$-\textit{multiplicity bounded} provided each
$a_{w\lambda} \leq k$ in the expansion $F_w = \sum_{\lambda}
a_{w\lambda} s_{\lambda}$.  Thus, 1-multiplicity bounded is the same as
multiplicity free.
\end{defn}

For each $k\geq 1$, the set of all $k$-multiplicity bounded
permutations respects pattern containment by
Corollary~\ref{cor:kvexrespectsmultiplicity}.  If one could bound the
size of the minimal patterns which are not $k$-multiplicity bounded,
then one would prove the following conjecture.

\begin{conj}
The $k$-multiplicity-bounded permutations are defined by avoiding a
finite set of permutation patterns.  
\end{conj}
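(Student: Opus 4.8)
The plan is to adapt the argument used for Theorem~\ref{thm:kvex}. One inclusion is already available: Corollary~\ref{cor:kvexrespectsmultiplicity} shows the $k$-multiplicity-bounded permutations form a pattern-closed set, so the content of the conjecture is a multiplicity-sensitive refinement of Theorem~\ref{thm:kvex}, namely a bound $N = N(k)$ such that whenever $a_{w\lambda} \ge k+1$ for some partition $\lambda$, the permutation $w$ contains a pattern $v \in S_m$ with $m \le N$ and $a_{v\mu} \ge k+1$ for some partition $\mu$. Since the leaves of $RJP(w)$ are exactly the shapes of $s_{D(w)}$ counted with multiplicity, the hypothesis gives at least $k+1$ leaves of $RJP(w)$ equivalent to $\lambda$. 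Mimicking the proof of Theorem~\ref{thm:kvex}, one would fix $k+1$ such leaves $\ell_1, \dots, \ell_{k+1}$, build a colorful subtree $\mathcal{T}$ of $RJP(w)$ containing the root and these leaves, and pass to the flattened permutation $v = w_{\mathcal{T}}$. Reinterpreting $\mathcal{T}$ as a (not necessarily complete) James-Peel tree for $D(v)$ gives $\bigoplus_i S^{(B_i)^{\max}} \hookrightarrow S^{D(v)}$, where $B_i$ is the restriction of the corresponding leaf $A_i$ of $\mathcal{T}$ to the rows and columns touched by the moves of $\mathcal{T}$. If $\mathcal{T}$ has boundedly many leaves then $v$ lies in a bounded symmetric group by Lemma~\ref{lem:subtreepattern}, and if the $k+1$ partitions $(B_{i_1})^{\max}, \dots, (B_{i_{k+1}})^{\max}$ attached to $\ell_1, \dots, \ell_{k+1}$ all coincide, then that common partition $\mu$ has $a_{v\mu} \ge k+1$ and $v$ is the pattern we want.

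Two new difficulties arise compared with the $k$-vexillary case. First, in Theorem~\ref{thm:kvex} any $k+1$ leaves of $RJP(w)$ would do, so one could grow a colorful subtree with exactly $k+1$ leaves; here we must \emph{reach} the $k+1$ leaves of the prescribed shape $\lambda$, and if they lie far apart the minimal subtree spanning them and the root can have long non-branching segments, so enlarging it to a colorful subtree may blow up the leaf count. Overcoming this would need either a careful choice of the $k+1$ target leaves (clustered low in a common subtree) or a more economical enlargement that keeps $|R(\mathcal{T})| + |C(\mathcal{T})|$ bounded. The second and, I expect, the genuinely hard point is the requirement that the restricted leaves share a common maximal shape: the $\ell_j$ are equivalent to $\lambda$ only as abstract diagrams, and cutting them down to the rows and columns used by $\mathcal{T}$ can produce genuinely inequivalent subdiagrams of $\lambda$. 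The obvious fix---keeping enough rows and columns that each $\ell_j$ survives intact, so $B_{i_j} \simeq \lambda$---gives no bound in terms of $k$, because the repeated shape can be arbitrarily large even when $a_{w\lambda}$ is exactly $k+1$. For instance, a direct sum of permutations has Stanley symmetric function the product of the factors', so $F_{u \oplus 2143} = F_u\,(s_{(2)} + s_{(1,1)})$, and choosing $u$ vexillary with $F_u = s_{(n)}$ yields a coefficient $2$ on the unbounded shape $(n+1,1)$---even though $u \oplus 2143$ does contain the bounded non-multiplicity-free pattern $214365$, whose repeated shape is the much smaller $(2,1)$.

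What the refinement really requires, then, is a \emph{localization of multiplicity}: a large repeated shape in $s_{D(w)}$ must already be forced by a bounded sub-pattern, necessarily through a possibly much smaller repeated shape. One promising route is a multiplicity-refined converse of the inequality $EG(w) \ge EG(\fl(u_1))\cdot EG(\fl(u_2))$: the injection $\iota$ of Lemma~\ref{lem:JPtreetwosubdiagrams} already yields the easy direction, since a shape occurring $a$ times in $\shapes(D(\fl(u_1)))$ together with one occurring $b$ times in $\shapes(D(\fl(u_2)))$ forces some shape to occur at least $ab$ times in $\shapes(D(w))$; one would want to run this in reverse and split off a bounded factor carrying the multiplicity. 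A second route is a direct analysis of the Lascoux--Sch\"utzenberger recursion, isolating and bounding the portion of $w$ responsible for two leaves of shape $\lambda$ coinciding. We expect this point to be of roughly the same difficulty as bounding the minimal patterns in Conjecture~\ref{conj:mult.free}.
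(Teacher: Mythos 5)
The statement you were asked to prove is an open conjecture: the paper offers no proof of it, and explicitly frames it as conditional (``If one could bound the size of the minimal patterns which are not $k$-multiplicity bounded, then one would prove the following conjecture''). Your proposal is therefore not being measured against a proof in the paper, and, to your credit, you do not actually claim to have one --- what you have written is an obstruction analysis. That analysis is accurate and matches the paper's own framing: by Corollary~\ref{cor:kvexrespectsmultiplicity} the $k$-multiplicity-bounded permutations are pattern-closed, so the entire content of the conjecture is a bound $N(k)$ on the size of a minimal pattern witnessing $a_{v\mu}\geq k+1$. Your identification of why the proof of Theorem~\ref{thm:kvex} does not transfer is also correct on both counts: the colorful-subtree argument of Lemma~\ref{lem:subtreepattern} only controls the \emph{number} of leaves reached, not which leaves, and restricting a leaf equivalent to $\lambda$ to the rows and columns touched by $\mathcal{T}$ need not preserve its shape, so coincident multiplicities in $s_{D(w)}$ can be destroyed in $s_{D(w_{\mathcal{T}})}$. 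Your example $F_{u\oplus 2143}=F_u(s_{(2)}+s_{(1,1)})$ with $F_u=s_{(n)}$ checks out (the coefficient of $s_{(n+1,1)}$ is $2$ while the relevant bounded pattern $214365$ carries its multiplicity on the much smaller shape $(2,1)$), and it correctly pinpoints that the repeated shape itself cannot be bounded, only the pattern responsible for the repetition.

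The gap, then, is the whole theorem: the ``localization of multiplicity'' step you name at the end is not a technical detail to be filled in but is precisely the open problem, and neither of your two proposed routes (reversing the injection of Lemma~\ref{lem:JPtreetwosubdiagrams}, or a direct analysis of the Lascoux--Sch\"utzenberger recursion) is carried out. Your own closing sentence concedes this by estimating the difficulty as comparable to Conjecture~\ref{conj:mult.free}, which is the case $k=1$ of the very statement at hand and is likewise open. So the proposal should be read as a correct reduction of the conjecture to a sharply stated open question, together with evidence that the naive adaptation of Theorem~\ref{thm:kvex} fails --- useful, but not a proof.
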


\section{Future work}
\label{sec:future}


We were led to Theorem~\ref{thm:patternthm} by trying to study pattern
containment for diagrams.  In particular, we observed in experiments
that the conclusion of Corollary~\ref{cor:JPtreesubdiagram} holds for
arbitrary diagrams and subdiagrams.  Is this always true?
Corollary~\ref{cor:JPtreesubdiagram} holds when the subdiagram is
(equivalent to) a permutation diagram, a skew shape, or a
column-convex diagram, since these diagrams all admit complete
James-Peel trees. The algorithm given by Reiner and Shimozono in
\cite{percentavoiding} for decomposing Specht modules shows that the
conclusion of Corollary~\ref{cor:JPtreesubdiagram} also holds when $D$ is
percent-avoiding and $D' = D \cap \{i : a \leq i \leq b\} \times \{j :
c \leq j \leq d\}$ for some $a,b,c,d$.

We have no simpler characterizations of the lists of patterns arising
from Corollary~\ref{cor:kvexbound} and Theorems \ref{thm:2vexthm} and
\ref{thm:3vexthm}. One necessary condition for $w$ to be
non-$k$-vexillary but contain only $k$-vexillary patterns is that
every $w(i)$ participates in some $2143$ pattern. Otherwise, the $i$th
row and $w(i)$th column of $D(w)$ are contained in or contain every
other row and column, and so they do not participate in the James-Peel
moves of $RJP(w)$. This is far from sufficient, however. Magnusson and
\'Ulfarsson \cite{marked-mesh-learning} have developed an algorithm for
characterizing sets of permutations in terms of avoiding mesh
patterns, but this algorithm does not seem to simplify our patterns
appreciably. One might try even more general notions of patterns, such
as marked mesh patterns.  Bridget Tenner has noted that some
2-vexillary patterns do collapse.  In these cases though, the
algorithms for detecting pattern containment require checking for the
original patterns.

In \cite{typebvex}, vexillary elements of types $B, C, D$ in the
hyperoctahedral group are defined as those whose Stanley symmetric
function is equal to a single Schur $P$- or $Q$-function ($P$ in types
$B, D$, and $Q$ in type $C$), and it is shown that the vexillary
elements are again characterized by avoiding a finite set of
patterns. Computer calculations show that
Corollary~\ref{cor:kvexrespectspatterns} with $k = 2$ holds in $B_9$
for types $B, C$ and in $D_8$; moreover, the $2$-vexillary patterns in
$B_9$ of types $B, C$ are characterized by avoiding sets of patterns
in $B_3 \cup \cdots \cup B_8$. The main obstacle to extending our
proofs to these other root systems is the apparent lack of an analogue
of the Specht module of a diagram. In a recent preprint
\cite{andersonfultonvex}, Fulton and Anderson give a different
variation on vexillary permutations in types $B, C, D$, and one might
ask if there is a reasonable notion of $k$-vexillary in their setting.

Klein, Lewis and Morales have recently defined another generalization
of vexillary permutations.  For $w \in S_n$, let $D(w)$ be its
permutation diagram.  It is shown in \cite{KLM}, that the rows and columns of
$D(w)$ can be rearranged to form the complement of a  skew shape if and only if $w$
avoids 9 patterns.  They call these \textit{skew vexillary}
permutations.  Under what conditions can the rows and columns of an
arbitrary diagram be rearranged into a skew shape or the complement of
a skew shape?

\section*{Acknowledgments}

We would like to thank Dave Anderson, Andrew Berget, Alain Lascoux, Ricky Liu, Aaron
Pihlman, Austin Roberts, Mark Shimozono, Bridget Tenner, and Henning \'Ulfarsson for
helpful discussions, and Eric Peterson for the term ``bushy''.

\bigskip

 \bibliographystyle{plain}
 \bibliography{../k.vex/algcomb}

\end{document}